\numberwithin{equation}{section}
\newcommand{\be}{\begin{equation}}
\newcommand{\ee}{\end{equation}}
\newcommand{\ba}{\begin{eqnarray}}
\newcommand{\ea}{\end{eqnarray}}
\newtheorem{theorem}{Theorem}[section]
\newtheorem{proposition}[theorem]{Proposition}
\newtheorem{remark}[theorem]{Remark}
\newtheorem{lemma}[theorem]{Lemma}
\newtheorem{claim}[theorem]{Claim}
\begin{document}

\title [Controllability for  a Chemotaxis-Fluid model]{A controllability Result for a Chemotaxis-Fluid Model}

\author[F. W. Chaves-Silva]{F. W. Chaves-Silva$^{1,*}$ }
\thanks{$^1$Universit\'e de Nice Sophia-Antipolis, Laboratoire Jean A. Dieudonn\'e, UMR CNRS 6621, Parc Valrose, 06108 Nice Cedex 02,
France (\textit{fchaves@unice.fr}).  \\  \null $^2$Sorbonne Universit\'e, UPMC Univ. Paris 6, UMR 7598 Laboratoire Jacques-Louis Lions, Paris, F-75005 France (\textit{guerrero@ann.jussieu.fr}).
\\  $^*$F. W. Chaves-Silva has been supported  by  the ERC project Semi Classical Analysis of  Partial Differential Equations, ERC-2012-ADG, project number 320845}
\author[S. Guerrero]{S. Guerrero$^2$}




\maketitle

\begin{abstract}
In this paper we study the controllability of a coupled Keller-Segel-Navier-Stokes system. We show the local exact controllability of the system around some particular trajectories. The proof relies on new Carleman inequalities for the chemotaxis part and some improved Carleman inequalities for the Stokes system.

\vspace{0.5cm}

\noindent\textsc{R\'esum\'e.} Dans cet article, nous \'etudions la contr\^olabilit\'e  d'un syst\`eme de  Keller-Segel-Navier-Stokes coupl\'e. Nous montrons la contr\^olabilit\'e exacte locale du syst\`eme autour de quelques trajectoires particuli\`eres. La preuve repose sur de nouvelles in\'egalit\'es de Carleman pour la partie de la chimiotaxie et sur des in\'egalit\'es de Carleman am\'elior\'ees pour le syst\`eme de Stokes.
\end{abstract}


\section{Introduction and main results}
Let  $\Omega \subset \mathbb{R}^N$ (N = 2, 3) be a bounded connected open set whose boundary $\partial \Omega$ is regular enough. Let $T > 0$ and  $\omega_1$ and $\omega_2$  be two (small) nonempty subsets of $\Omega$, with $\omega_1 \cap \omega_2 \neq \emptyset$  when $N=3$. We will use the notation $Q =  \Omega \times (0,T) $ and  $\Sigma = \partial \Omega \times (0,T)$ and we will denote by $\nu(x)$ the outward normal to $\Omega$ at the point $x \in \partial \Omega$.

We introduce the  following usual spaces in  the context of fluid mechanics
$$
\bold{V} =   \{ u \in H^1_0(\Omega)^N; \ div \ u = 0\},
$$
$$
\bold{H} = \{ u \in L^2(\Omega)^N; \ div \ u = 0, u \cdot \nu = 0 \ \mbox{on} \ \partial \Omega \}
$$
and consider the following controlled Keller-Segel-Navier-Stokes  coupled system
\begin{equation}\label{system}
\left |
\begin{array}{ll}
n_{t}  + u\cdot \nabla n  - \Delta n  = -\nabla \cdot (n\nabla c)   &     \mbox{in}  \  \ Q,  \\
c_t  + u\cdot \nabla c - \Delta c  = -nc  + g_1 \chi_{1}  &     \mbox{in}  \  \ Q, \\
u_t - \Delta u + (u\cdot \nabla) u + \nabla p = ne_N + g_2e_{N-2} \chi_{2} &     \mbox{in}  \  \ Q,  \\
\nabla \cdot u =0 &     \mbox{in}  \  \ Q, \\
\frac{\partial n}{\partial \nu} = \frac{\partial c}{\partial \nu} = 0; \ u = 0    &    \mbox{on}  \  \   \Sigma, \\
n(x,0) = n_0;  \ c(x,0) = c_0; \ u(x,0) = u_0  &    \mbox{in}    \  \  \Omega,
\end{array}
\right.
\end{equation}
where $g_1$ and $g_2$  are internal controls and the $\chi_i : \mathbb{R}^N \rightarrow \mathbb{R}$, $i=1,2$,  are $C^\infty$  functions  such that $supp \  \chi_i \subset \subset \omega_i$, $0 \leq \chi_i \leq 1$ and $\chi _i\equiv 1$ in $\omega_i^{0}$, for some $ \emptyset \neq \omega_i^{0} \subset \subset \omega_i$, with $\omega_1^{0}\cap \omega_2^{0} \neq \emptyset$ when $N=3$, and

\begin{align}
e_0 =  (0,0), \ e_1=(1,0,0) \ \   \text{and} \ \ e_N&=  \left\{
\begin{array}{cc}
(0,1) & \text{if} \ N=2;\\
(0,0,1) &\text{if} \ N=3.
\end{array}
 \right.
 \end{align}
 The unknowns $n$, $c$, $u$ and $p$ are the cell density, substrate concentration, velocity and
pressure of the fluid, respectively.

System \eqref{system} was proposed by \textit{Tuval et al.} in \cite{Tuval} to describe large-scale convection patterns in a water drop sitting on a glass surface containing oxygen-sensitive bacteria, oxygen diffusing into the drop through the fluid-air interface (for more details see, for instance, \cite{Lorz2, LiuLorz,Lorz1}). In particular,  it is a good model for   the collective behavior of a suspension of oxygen-driven bacteria in an aquatic fluid, in which the oxygen concentration $c$ and the density of the bacteria $n$ diffuse and are transported by the fluid at the same time.

The main objective of this paper is to analyze the controllability problem of system \eqref{system} around some particular trajectories. More precisely,  we consider  $(M, M_0) \in \mathbb{R}_{+}^2$ and aim to find $g_1$ and $g_2$ such that the solution $(n,c,u, p)$ of \eqref{system} satisfies
 \begin{equation}\label{F1}
 n(T) = M; \ c(T) = M_0e^{-MT}; \ u(T) = 0.
 \end{equation}
 Moreover,  for the case $N=2$, we want to show that we can take $g_2\equiv0$.

\begin{remark}
 Noticing that $(n, c, u, p) = (M,M_0e^{-Mt},0, Mx_N)$ is a  solution of \eqref{system}, we see that \eqref{F1} means we are driving the solution \eqref{system} to a prescribed trajectory.
 \end{remark}

To analyze the controllability of system \eqref{system} around $(M,c_0e^{-Mt},0,  Mx_N)$, we first consider its linearization around this trajectory, namely
\begin{equation}\label{system-1}
\left |
\begin{array}{ll}
n_{t}    - \Delta n  = - M \Delta c + h_1   &     \mbox{in}  \  \ Q,  \\
c_t   - \Delta c  = -Mc  -M_0e^{-Mt}n + g_1 \chi_{\omega_1} +h_2  &     \mbox{in}  \  \ Q, \\
u_t - \Delta u  + \nabla p = ne_N + g_2\chi_{\omega_2}e_{N-2}  +H_3 &     \mbox{in}  \  \ Q,  \\
\nabla \cdot u =0 &     \mbox{in}  \  \ Q, \\
\frac{\partial n}{\partial \nu} = \frac{\partial c}{\partial \nu} = 0; \ u = 0    &    \mbox{on}  \  \   \Sigma, \\
n(x,0) = n_0;  \ c(x,0) = c_0; \ u(x,0) = u_0  &    \mbox{in}    \  \  \Omega,
\end{array}
\right.
\end{equation}
where the functions $h_1$ and  $h_2$ and the vector function $H_3$  are given exterior forces such that $(h_1, h_2, H_3)$ belongs  to an appropriate Banach space X (see \eqref{X}).  Our objective will be to find $g_1$ and $g_2$ such that the solution $(n,c,u,p)$ satisfies $n(T)=0$, $c(T)=0$ and $u(T)=0$. Moreover we want that   $\bigl(u\cdot \nabla n + \nabla \cdot (n\nabla c), nc + u\cdot \nabla c, (u\cdot \nabla ) u \bigl)$ belongs to $X$. Then we employ an inverse mapping argument introduced in \cite{Im-1} to obtain the controllability of \eqref{system} around $(M,c_0e^{-Mt},0,  Mx_N)$.

It is well-known that the null controllability of \eqref{system-1} is equivalent to a suitable observability inequality for  the solutions of  its  adjoint system
\begin{equation}\label{adjoint-1}
\left |
\begin{array}{ll}
-\varphi_{t}    - \Delta \varphi =   -M_0e^{-Mt}\xi + ve_N+ f_1   &     \mbox{in}  \  \ Q,  \\
-\xi_t   - \Delta \xi  =  -M\xi - M \Delta \varphi  +f_2&     \mbox{in}  \  \ Q, \\
-v_t - \Delta v  + \nabla \pi = F_3 &     \mbox{in}  \  \ Q,  \\
\nabla \cdot v =0 &     \mbox{in}  \  \ Q, \\
\frac{\partial \varphi}{\partial \nu} = \frac{\partial \xi}{\partial \nu} = 0; \ v = 0    &    \mbox{on}  \  \   \Sigma, \\
\varphi(x,T) = \varphi_T;  \ \xi(x,T) = \xi_T; \ v(x,T) = v_T  &    \mbox{in}    \  \  \Omega, \\
\int_{\Omega} \varphi_T(x)dx=0,
\end{array}
\right.
\end{equation}
where $(f_1, f_2, F_3) \in L^2(Q)\times L^2(Q) \times L^2(0,T;\bold{V})$. In this work, we obtain the  observability inequality as a consequence of an  appropriate  global Carleman inequality for the solution of \eqref{adjoint-1}.

With the help of the Carleman inequality that we obtain for the solutions of \eqref{adjoint-1} and an appropriate inverse function theorem, we will prove the following result, which is the main result of this paper.

\begin{theorem}\label{mainresultt}
Let $(M,M_0) \in \mathbb{R}^2_+$ and   $(n_0,c_0, u_0) \in H^1(\Omega) \times H^2(\Omega) \times \bold{V}$,  with  $n_0, c_0 \geq 0$,  $\frac{1}{|\Omega|}\int_\Omega n_0dx =M$ and  $\frac{\partial c_0}{\partial \nu} = 0$ on $\partial \Omega$.  We have

\begin{itemize}
\item If $N=2$, there exists $\gamma >0$ such that  if $||(n_0-M, c_0-M_0e^{-MT}, u_0)||_{H^1(\Omega) \times H^2(\Omega)\times \bold{V}} \leq \gamma$, we can find $g_1 \in L^2(0,T;H^1(\Omega))$, and an associated solution $(n,c, u, p)$ to \eqref{system}  satisfying
$$
(n(T),c(T), u(T)) = (M,M_0e^{-MT}, 0) \ \text{in} \ \Omega.
$$

\item If $N=3$, there exists $\gamma >0$ such that  if $||(n_0-M, c_0-M_0e^{-MT}, u_0)||_{H^1(\Omega) \times H^2(\Omega)\times \bold{V}} \leq \gamma$,  we can find $g_1 \in L^2(0,T;H^1(\Omega))$ and $g_2  \in L^2(0,T;L^2(\Omega))$ and an associated solution $(n,c, u, p)$ to \eqref{system}  satisfying
$$
(n(T),c(T), u(T)) = (M,M_0e^{-MT}, 0) \ \text{in} \ \Omega.
$$
\end{itemize}
\end{theorem}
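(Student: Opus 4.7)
The plan is to follow the standard strategy for local controllability of semilinear parabolic--fluid systems: (i) establish a global Carleman estimate for the adjoint system \eqref{adjoint-1}; (ii) by duality, deduce null controllability of the linearization \eqref{system-1} in a weighted functional framework whose exponential decay at $t=T$ lets the quadratic nonlinearities of \eqref{system} be fed back into the source-term space $X$; (iii) conclude via the inverse mapping argument of \cite{Im-1}, applied to the operator that sends $(n,c,u,p,g_1,g_2)$ to the PDE residual of \eqref{system} and to the initial/final conditions $(n,c,u)|_{t=0,T}$; its differential at the reference trajectory $(M,M_0 e^{-Mt},0,M x_N)$ is precisely the linearized controllability map.

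The heart of the proof is the Carleman estimate of Step~(i). With the usual Fursikov--Imanuvilov weights
\[
\alpha(x,t)=\frac{e^{2\lambda\|\eta\|_\infty}-e^{\lambda\eta(x)}}{t(T-t)},\qquad \sigma(x,t)=\frac{e^{\lambda\eta(x)}}{t(T-t)},
\]
for $\eta\in C^2(\overline\Omega)$ with $|\nabla\eta|>0$ outside a small $\omega_0\subset\subset\omega_1^{0}$ (chosen inside $\omega_2^{0}$ as well when $N=3$) and $\partial_\nu\eta\leq 0$ on $\partial\Omega$, I would first apply the standard Neumann heat Carleman to the scalar variables $\varphi$ and $\xi$ and a Stokes Carleman to $v$. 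The delicate point is the top-order coupling $-M\Delta\varphi$ on the right-hand side of the $\xi$-equation: the resulting source term $M^2\iint e^{-2s\alpha}|\Delta\varphi|^2$ sits at the same order as the second-derivative term produced by the $\xi$-Carleman, and must be absorbed using the second-derivative term on the left-hand side of the $\varphi$-Carleman (available once $s,\lambda$ are taken sufficiently large). The lower-order coupling $-M_0 e^{-Mt}\xi + v_N$ in the $\varphi$-equation is then absorbed in a standard fashion.

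A second non-trivial issue is the economy of controls: in the $3$D case only $g_2$ in the direction $e_1$ is allowed, and in the $2$D case no control on $v$ is used at all. This is handled by the improved Stokes Carleman mentioned in the abstract, which by means of $\operatorname{div}v=0$ and suitable combinations of the Stokes equations reduces the local observation of $v$ to its first component in $\omega_2$. For $N=2$ the Stokes observation is avoided altogether; instead the $\varphi$-equation gives $v_N=-\varphi_t-\Delta\varphi+M_0 e^{-Mt}\xi - f_1$, so a local observation of $\varphi$ in $\omega_1$ yields a local observation of $v_N$ there (at the cost of spatial derivatives of $\varphi$), which combined with $\operatorname{div}v=0$ closes the estimate. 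The need to observe derivatives of $\varphi$ is exactly what forces $g_1\in L^2(0,T;H^1(\Omega))$ in the final statement.

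Once the Carleman estimate, and hence an observability inequality in suitable weighted norms, is in hand, Step~(ii) follows by minimizing the standard weighted primal--dual functional: this produces controls $g_1,g_2$ and a state $(n,c,u)$ with $(n(T),c(T),u(T))=0$, together with exponential decay of $(n,c,u)$ near $t=T$ strong enough to guarantee that the quadratic map $(n,c,u)\mapsto\bigl(u\cdot\nabla n+\nabla\!\cdot\!(n\nabla c),\,nc+u\cdot\nabla c,\,(u\cdot\nabla)u\bigr)$ sends the controllability space back into $X$. Step~(iii) is then a direct application of the inverse mapping argument of \cite{Im-1}. The main obstacle throughout is Step~(i): the coexistence of the top-order coupling $M\Delta\varphi$ in the adjoint with the scarcity of controls --- a single scalar in $3$D, none on $v$ in $2$D --- is exactly what prevents one from just quoting known Carleman inequalities, and forces the new estimates announced in the abstract.
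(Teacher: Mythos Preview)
Your three--step outline (Carleman $\Rightarrow$ weighted null controllability of the linearization $\Rightarrow$ inverse mapping) is exactly the architecture of the paper, and your remarks on Step~(ii)--(iii) are fine. The gap is in Step~(i), in the sentence where you say that the source term $M^2\iint e^{-2s\alpha}|\Delta\varphi|^2$ coming from the $\xi$--Carleman ``must be absorbed using the second--derivative term on the left--hand side of the $\varphi$--Carleman (available once $s,\lambda$ are taken sufficiently large)''. That absorption does \emph{not} work: the standard heat Carleman (Lemma~\ref{lemma-4-CSG} with $\beta=0$) gives on the left only $s^{-1}\iint e^{2s\alpha}\phi^{-1}|\Delta\varphi|^2$, whereas the source term on the right of the $\xi$--estimate carries weight $\iint e^{2s\alpha}|\Delta\varphi|^2$. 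For large $s$ the left--hand weight is \emph{smaller}, so no choice of $s,\lambda$ closes the loop. Shifting the exponent $\beta$ in the $\xi$--estimate does not fix the problem either, because one must simultaneously eliminate the \emph{local} observation of $\varphi$ (there is no control on $n$), and the coupling in $\eqref{adjoint-1}_2$ gives access only to local $\Delta\varphi$, not to local $\varphi$.

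The paper resolves this by working one derivative higher on the $\varphi$--side. One writes $\rho\widehat\phi^{-9/2}\varphi=\eta+\psi$, where $\eta$ absorbs $f_1$ and enjoys parabolic $H^2$--regularity, and one applies a Carleman inequality \emph{to $\Delta\psi$} (equation~\eqref{x2-1}), using the Fourier--boundary estimate of \cite{C-G-B-P-1} because $\partial_\nu\Delta\psi=\rho\widehat\phi^{-9/2}\partial_\nu v_3\neq 0$. This yields a left--hand term $\widehat I_0(s,\Delta\psi)$ that genuinely dominates the $|\Delta\varphi|^2$ source. The price is a local term $s^3\iint_{\omega_0^4}e^{2s\alpha}\phi^3|\Delta\psi|^2$; this is what the identity $-M\Delta\varphi=\xi_t+\Delta\xi-M\xi-f_2$ is used for (Step~5 and Claim~\ref{claim1}), converting local $\Delta\psi$ into local $\xi$ plus absorbable remainders. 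The Stokes part is likewise not a citation of an existing estimate: the paper needs the local observation to be in $\Delta z_i$ (so that it can in turn be traded for $\Delta\psi$ via $\eqref{adjoint-1}_1$), and Remark~\ref{remarkMamdou} explains why the result of \cite{Car-Guey} does not apply; this is why Lemma~\ref{CarlemanStokes} is proved from scratch with the decomposition $e^{\frac32 s\widehat\alpha}v=w+z$. Your sketch glosses over both of these mechanisms, and without them Step~(i) does not go through.
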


\null

\begin{remark}
Assumption  $\frac{1}{|\Omega|}\int_\Omega n_0dx =M$ in Theorem \ref{mainresultt} is a necessary condition for the controllability of system \eqref{system}. This is due to the fact that the  mass of $n$ is preserved, i.e.,
\begin{equation}\label{massa}
\frac{1}{|\Omega|} \int_{\Omega}n(x,t)dx = \frac{1}{|\Omega|} \int_{\Omega}n_0(x)dx,    \ \  \forall  t>0.
 \end{equation}
\end{remark}

In the two dimensional case, because we want to take $g_2=0$, we only have a control acting  on the second equation of \eqref{system-1}. Therefore, in the Carleman inequality for the solutions of   \eqref{adjoint-1},   we need to bound global integrals of $\varphi$ and $\xi$ and $v$  in terms of a local integral of $\xi$ and global integrals of $f_1$, $f_2$ and $F_3$.

 For the three dimensional case, we have two controls, $g_1$ acting on  $\eqref{system}_2$  and another control $g_2$ acting on the third component of the Navier-Stokes equation  $\eqref{system}_3$. In this case,  in the Carleman inequality for the solutions of   \eqref{adjoint-1},   we need to bound global integrals of $\varphi$ and $\xi$ and $v$  in terms of a local integral of $\xi$ another in $v_3$  and global integrals of $f_1$, $f_2$ and $F_3$.

 For both cases, $N=2$ or $3$, the main difficulty when proving the desired Carleman inequality for solutions of \eqref{adjoint-1} comes from the fact that the coupling in the second equation is in $\Delta \varphi$ and not in $\varphi$.

Concerning the controllability of system \eqref{system}, we are not aware of any controllability result obtained previously to Theorem \ref{mainresultt}. For the controllabity of the Keller-Segel system with control acting on the component of the chemical, as far as we know, the only result is the one in  \cite{CSG}, where  the local controllability of the Keller-Segel system around a constant trajectory is obtained. On the other hand,  for the Navier-Stokes equations, controllability  has been the object of intensive research during the past few years  and several  local controllability results has been obtained in many different contexts (see, for instance, \cite{CorL, FC-G-P, Im-2} and references therein).

It is important to say that it is not possible to combine the result in  \cite{CSG} with any previous controllability result for the Navier-Stokes system in order to obtain controllability results for  \eqref{system}. In fact, for the first two equations in \eqref{adjoint-1},  one cannot use the Carleman inequality obtained in \cite{CSG}.  This is due to the fact that for the obtainment of a suitable Carleman inequality for the adjoint system in \cite{CSG}, it is necessary  that $\frac{\partial \Delta \varphi}{\partial \nu}=0$, which is no longer the case for \eqref{adjoint-1}. For this reason,  to deal with  the chemotaxis part of system \eqref{adjoint-1}, we borrow some ideas from \cite{FWCSthesis}. For the Stokes part of \eqref{adjoint-1}, it is also not possible to use Carleman inequalities for the Stokes system obtained in previous works as in  \cite{Car-Guey} and \cite{CorGue}. Indeed, since in \eqref{adjoint-1} the coupling in the second equation is in $\Delta \varphi$, and we have a term in $ve_N$ in the first equation, for the Stokes equation, we need to show a Carleman inequality with a local term in $\Delta v e_N$. Actually, in \cite{Car-Guey} a Carleman inequality  for the Stokes system  with measurement through  a local observation in the Laplacian of one component  is proved. However, that result cannot be  used in our situation (see  Remark \ref{remarkMamdou}). For this reason,  we need to prove a new local Carleman inequality for solutions of the Stokes system (see  Lemma \ref{CarlemanStokes}).

 \vskip0.3cm

This paper is divided as follows. Section \ref{sect2} is devoted to prove a suitable observability inequality for the  solutions of  \eqref{adjoint-1}. In Section \ref{sect3}, we  prove the null controllability of  system \eqref{system-1}, with an appropriate right-hand side. Finally, in Section \ref{sec4} we prove Theorem \ref{mainresultt}.


\section{Carleman inequality}\label{sect2}

In this section we prove a Carleman inequality for the adjoint system \eqref{adjoint-1}. This inequality will be the main ingredient for the obtention of a  controllability result for the nonlinear system \eqref{system} in the next section.

We begin introducing  several weight functions which we need to state our Carleman inequality.  The basic weight  will be a function $\eta_0 \in C^2(\overline{\Omega})$ verifying
$$ \eta_0(x) > 0 \ \mbox{in} \  \Omega , \  \  \eta_0 \equiv 0 \  \mbox{on} \ \partial \Omega, \   \  |\nabla \eta_0(x)| > 0 \ \forall x \in \overline{\Omega \backslash \omega_0},$$
where $\omega_0$  is a nonempty open set with

\begin{align}
\omega_0 &\subset \subset  \left\{
\begin{array}{cc}
\omega_1^0  & \text{if} \ N=2;\\
\omega_1^0 \cap \omega_2^0 &\text{if} \ N=3.
\end{array}
 \right.
 \end{align}

The existence of such a function $ \eta_0$ is proved in \cite{F-Im}.

For some positive real number  $\lambda$, we introduce:
\begin{align} \label{weightfunctions1}
&\phi(x,t) = \frac{e^{\lambda \eta_0(x)}}{\ell(t)^{11}}, \ \alpha(x,t) = \frac{e^{\lambda \eta_0(x)} - e^{2\lambda||\eta_0||_{\infty}}}{\ell(t)^{11}}, \nonumber \\
&\widehat{\phi}(t) = \min_{x \in \overline{\Omega}} \phi(x,t), \ \phi^*(t) = \max_{x \in \overline{\Omega}} \phi(x,t), \ \alpha^*(t) = \max_{x \in \overline{\Omega}}\alpha (x,t),\ \widehat{\alpha} = \min_{x \in \overline{\Omega}}\alpha (x,t),
\end{align}
where $\ell \in C^{\infty}([0,T])$ is a positive function satisfying
$$
\ell(t) =t \ \text{for} \  t\in [0,T/4], \ \ell(t)= T-t  \ \text{for} \  t\in [3T/4,T], \   \text{and}
$$
$$
\ell(t)  \leq \ell(T/2), \forall t \in [0,T].
$$

\begin{remark}
From the definition of $\phi$ and $\widehat{\phi}$, it follows that
\begin{equation*}
\widehat{\phi}(t) \leq \phi (x,t) \leq e^{\lambda  \|\eta_0\|_{\infty}} \widehat{\phi}(t),
\end{equation*}
for every $x\in \Omega $, every $t\in [0,T]$ and every   $\lambda \in \mathbb{R}_{+}$.
\end{remark}

We also introduce the following notation:
\begin{align}\label{ineq-epsxi-1}
\widehat{I}_{\beta}(s; q):= & \ s^{3+\beta} \iint\limits_Q e^{2s\alpha}\phi^{3+\beta}|q|^2 dxdt + s^{1+\beta} \iint\limits_Q e^{2s\alpha}\phi^{1+\beta} |\nabla q|^2 dxdt,
\end{align}
\begin{align}\label{ineq-epsxi}
I_{\beta}(s; q):= & \ \widehat{I}_{\beta}(s; q)  + s^{-1+\beta} \iint\limits_Q e^{2s\alpha}\phi^{-1+\beta}(|q_t|^2 + |\Delta q |^2) dxdt,
\end{align}
where $\beta$ and $s$ are   real numbers and $q = q(x,t)$.

\null

The main result of this section is the following Carleman estimate for the solutions of \eqref{adjoint-1}.
\begin{theorem}\label{CarlemanP}
There exist $C = C(\Omega, \omega_0)$ and  $\lambda_0 = \lambda_0(\Omega, \omega_0)$ such that,  for every $\lambda \geq \lambda_0$, there exists $s_0 = s_0(\Omega, \omega_0, \lambda,T)$ such that, for any $s \geq s_0$, any $(\varphi_T, \xi_T,v_T) \in L^2(\Omega)\times L^2(\Omega) \times \bold{H}$ and any $(f_1, f_2,F_3) \in L^2(Q)\times L^2(Q)\times L^2(0,T;\bold{V})$, the solution $(\varphi, \xi,v)$ of system $(\ref{adjoint-1})$ satisfies

\begin{align}\label{Carlemanadjointsystem0}
s^5\int \! \! \! \int_Q&e^{2s\widehat{\alpha}}\widehat{\phi}^5 | z_2|^2 dx dt + s^5 \int \! \! \! \int_Q e^{5s\widehat{\alpha}}\widehat{\phi}^5|v|^2 dx dt\nonumber \\
+& \sum_{i \neq 2}\biggl(s^5\int \! \! \! \int_Q  e^{2s\alpha}\phi^5|\Delta z_i|^2dxdt + s^3\int \! \! \! \int_Q e^{2s\alpha}\phi^3| \nabla \Delta z_i|^2dx dt+  \widehat{I}_{-2}(s; \nabla \nabla \Delta z_i) \biggr)  \\
&  + \widehat{I}_0(s, \Delta \psi)+I_2(s, e^{\frac{3}{2}s\widehat{\alpha}}\widehat{\phi}^{-9/2}\xi) +\int \! \! \! \int_Q e^{2s\alpha+3s\widehat{\alpha}}\widehat{\phi}^{-6}|\Delta \varphi|^2dxdt \nonumber \\
&+\int \! \! \! \int_Q e^{5s\widehat{\alpha}}\widehat{\phi}^{-6}|\nabla \varphi|^2dxdt  \nonumber \\
&\leq  C\biggl(s^{33}\int \! \! \! \int_{\omega_1 \times (0,T)} e^{2s\alpha+3s\widehat{\alpha}} \widehat{\phi}^{61}|\chi_1|^2|\xi|^2 dxdt  +(N-2)s^9\int \! \! \! \int_{\omega_2 \times (0,T) } e^{2s\alpha}\phi^9|\chi_2|^2| v_1|^2 dxdt\nonumber \\
&+  \int \! \! \! \int_Q  e^{3s\widehat{\alpha}} \widehat{\phi}^{-9}|f_1|^2 dxdt  + s^{15}\int \! \! \! \int_{Q} e^{2s\alpha+3s\widehat{\alpha}} \widehat{\phi}^{24}   |f_2|^2dxdt  +\|e^{\frac{3}{2}s\widehat{\alpha}} F_3\|^2_{L^2(0,T;\bold{V})}\biggl).
\end{align}

\end{theorem}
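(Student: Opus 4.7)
The strategy is to decouple the three parabolic equations of \eqref{adjoint-1} by establishing separate Carleman inequalities---one for the Stokes subsystem, one for the $\xi$-equation, and one for an auxiliary equation that encodes $\Delta\varphi$---and then add them with carefully chosen weight powers so that the many cross terms can be absorbed. First, I would apply the Stokes-type Carleman inequality of Lemma \ref{CarlemanStokes} to $-v_t-\Delta v+\nabla\pi = F_3$. This gives Carleman estimates for $v$ and for the auxiliary components $z_i$ ($i\neq 2$) that appear on the left of \eqref{Carlemanadjointsystem0}, in terms of $F_3$ and a local integral of $v_1$ on $\omega_2$ when $N=3$ (in the $N=2$ case the local term for the Stokes part is instead produced through $\Delta v e_N$ and will ultimately be traded for a local $\xi$-term via the first equation).

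Next, I would apply a standard heat-equation Carleman inequality with Neumann boundary conditions to $-\xi_t - \Delta \xi + M\xi = -M\Delta\varphi + f_2$, using the twisted weight $e^{\frac{3}{2}s\widehat{\alpha}}\widehat{\phi}^{-9/2}$ and parameter $\beta = 2$ so that the $I_2\bigl(s, e^{\frac{3}{2}s\widehat{\alpha}}\widehat{\phi}^{-9/2}\xi\bigr)$ contribution appears on the left-hand side. This yields a local observation of $\xi$ in $\omega_1$ together with a source contribution in $\Delta\varphi$ that must be absorbed later.

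The central difficulty is the $\varphi$-equation: a direct Neumann Carleman for $\varphi$ would require $\partial_\nu\Delta\varphi = 0$, which fails here because the right-hand side $v e_N + f_1$ does not vanish on $\partial\Omega$. Following the ideas of \cite{FWCSthesis}, I would introduce an auxiliary function $\psi$ solving a parabolic equation chosen so that $\Delta\psi$ controls $\Delta\varphi$ and whose boundary data is compatible with a standard Carleman estimate. A Carleman inequality for $\psi$ then produces the $\widehat{I}_0(s,\Delta\psi)$, the weighted $\Delta\varphi$-term $\iint e^{2s\alpha+3s\widehat{\alpha}}\widehat{\phi}^{-6}|\Delta\varphi|^2$, and the $\nabla\varphi$-term $\iint e^{5s\widehat{\alpha}}\widehat{\phi}^{-6}|\nabla\varphi|^2$ that appear on the left of \eqref{Carlemanadjointsystem0}. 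The local $\varphi$-observation that this estimate naturally produces is then converted into a local $\xi$-observation by using the first equation of \eqref{adjoint-1} on a slightly larger cutoff set $\omega_0 \subset\subset \omega_1^0$, up to admissible contributions from $v e_N$ and $f_1$.

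Finally, the three estimates are summed and $\lambda$ is fixed large enough to absorb lower-order terms, then $s$ large enough so that the leading coefficients remain positive. The main obstacle is the bookkeeping of powers of $s$ and $\widehat\phi$: every coupling generated by one Carleman estimate must be dominated by the leading left-hand-side term of another. Concretely, the $\Delta\varphi$ source in the $\xi$-Carleman has to be absorbed by the $\Delta\psi$-Carleman (fixing the exponent $\widehat\phi^{-6}$ on the $\Delta\varphi$-term in the left-hand side), the $v e_N$ source in the $\varphi$-equation has to be absorbed by the Stokes estimate, and the $M_0 e^{-Mt}\xi$ coupling in the $\varphi$-equation must be absorbed by the $\xi$-Carleman. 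This chain of absorptions is what forces the unusually large local power $s^{33}\widehat{\phi}^{61}$ on the $\xi$-observation in $\omega_1$.
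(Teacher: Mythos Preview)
Your outline captures the paper's overall architecture---three separate Carleman estimates (Stokes, $\xi$, and an auxiliary $\psi$ encoding $\Delta\varphi$) combined by weight-matching---but two points in your sketch are off in ways that would block the proof if executed as written.

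First, Lemma \ref{CarlemanStokes} is not applied directly to $-v_t-\Delta v+\nabla\pi=F_3$. The paper first sets $\rho v=w+z$ with $\rho=e^{\frac{3}{2}s\widehat{\alpha}}$, where $w$ absorbs the low-regularity source $\rho F_3\in L^2(0,T;\bold{V})$ via energy estimates, and the Carleman lemma is applied to $z$ alone (whose right-hand side $-\rho'v$ is smooth enough). Crucially, that lemma does \emph{not} output a local term in $v_1$: it outputs local integrals of $\Delta z_i$ for every $i\neq 2$, including $\Delta z_N$. The local $\Delta z_1$ term is downgraded to a local $z_1$ (hence $v_1$) term by integration by parts, giving the observation on $\omega_2$ when $N=3$. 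But the local $\Delta z_N$ term is present in \emph{both} dimensions and must be traded away; this is done using the \emph{first} equation of \eqref{adjoint-1} (through the coupling $v\cdot e_N$ there), which converts the local $\Delta z_N$ into local $\xi$ and local $\Delta\psi$ terms plus absorbable remainders. Your sketch mentions this trade only for $N=2$; it is needed in $N=3$ as well (this is the paper's Step~4).

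Second, and more importantly, the conversion of the local $\Delta\psi$ observation into a local $\xi$ observation is done via the \emph{second} equation of \eqref{adjoint-1}, not the first. The second equation reads $-\xi_t-\Delta\xi+M\xi=-M\Delta\varphi+f_2$, so $\Delta\varphi$ (and hence $\Delta\psi$, up to $\Delta\eta$ which is controlled by $f_1$) is expressed through $\xi_t$, $\Delta\xi$, $\xi$, and $f_2$; integrating by parts in $t$ and $x$ and using the full $I_2(s,\rho\widehat{\phi}^{-9/2}\xi)$ on the left then closes the loop (this is the paper's Step~5 and the content of Claim~\ref{claim1}). The first equation does not isolate $\Delta\varphi$---it only gives $\varphi_t+\Delta\varphi$ in terms of $\xi$, $v_N$, $f_1$---so your proposed route through $\eqref{adjoint-1}_1$ would leave an uncontrolled $\varphi_t$. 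Once you swap in the correct equation here, the chain of absorptions you describe (and the emergence of the $s^{33}\widehat\phi^{61}$ power) goes through exactly as in the paper.
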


\null

We prove Theorem \ref{CarlemanP} in the case $N=3$ and, with the due adaptations, the case $N=2$ is performed in the exact same way.

\null

The plan of the proof contains five parts:

\null

\begin{itemize}
\item[\textit{Part 1}.] \textit{Carleman inequality for $v$:} We write $e^{\frac{3}{2}s\widehat{\alpha}}v = w+z$, where $w$ solves, together with some $q$,  a Stokes system with right-hand side in $L^2(0,T;\bold{V})$ and $z$ solves, together with some $r$, a Stokes system with right-hand side in $L^2(0,T; \bold{H}^3(\Omega))\cap H^1(0,T; \bold{V})$. Applying regularity estimates for $w$ and a Carleman estimate for $z$, we obtain a Carleman inequality for $v$ in terms of  local integrals of $\Delta z_1$ and  $\Delta z_3$ and a global integral in $F_3$.

\null

\item[\textit{Part 2}.] \textit{Carleman inequality for $\Delta \varphi$:} We write $e^{\frac{3}{2}s\widehat{\alpha}}\widehat{\phi}^{-9/2}\varphi =\eta + \psi$, where  $\eta$ solves a heat equation with a $L^2$ right-hand side and $\psi$  solves a heat equation with  right-hand side in $L^2(0,T;H^2(\Omega))\cap H^1(0,T;L^2(\Omega))$. Applying a Carleman inequality for $\psi$ and regularity estimates for $\eta$ we obtain a global estimate of $\Delta \varphi$ in terms of a local integral of $\Delta \psi$ and  global integrals of  $\Delta \xi$, $\Delta v_3$ and $f_1$.

\null

\item[\textit{Part 3}.] \textit{Carleman inequality for $\xi$:} Using $\eqref{adjoint-1}_2$, we obtain a Carleman estimate for the function $e^{\frac{3}{2}s\widehat{\alpha}}\widehat{\phi}^{-9/2}\xi$. Combining this inequality  with the Carleman inequality from the previous step,  global estimates of $\xi$ and $\Delta \varphi$ in terms of  local integrals of $\xi$ another in $\Delta \psi$ and global integrals of $\Delta v_3$, $f_1$ and $f_2$ are obtained.

\null

\item[\textit{Part 4}.] \textit{Estimate of $\Delta z_3$:} Using $\eqref{adjoint-1}_1$,  we estimate a local integral in $\Delta z_3$ in terms of  local integrals of $\xi$ and $\Delta \psi$  and some lower order terms.

\null

\item[\textit{Part 5}.] \textit{Estimate of $\Delta \psi$:} In the last part, we use $\eqref{adjoint-1}_2$ to estimate a local integral of $\Delta \psi$ in terms of a local integral of $\xi$ and global integrals in $f_1$ and $f_2$.

\end{itemize}

\null

Along the proof, for  $k\in \mathbb{R}$ and a vector function $F$ with $m$-coordinates, we  write
$$
\| F\|_{L^2(0,T;\bold{H}^k(\Omega))} := \| F\|_{L^2(0,T; H^k(\Omega)^m)}
$$
and
$$
\| F\|_{L^2(0,T;\bold{H}^k(\partial \Omega))} := \| F\|_{L^2(0,T; H^k(\partial \Omega)^m)}.
$$
and, for every $p\geq 0$
$$
\| F\|_{\bold{W}^{k,p}(\Sigma)}= \| F\|_{W^{k,p}(\Sigma)^m}.
$$

We will also denote $\omega^j_0$, $j \in \mathbb{N}^*$, to represent subsets
$$
\omega_0:= \omega^0_0 \subset \subset \omega^1_0 \subset \subset \omega^2_0 \subset \subset \cdots \subset \subset \omega_1\cap \omega_2
$$
and, for a fixed  $j \in \mathbb{N}^*$,  we will denote by $\theta_j$ a  function in $C^{\infty}_0(\omega^j_0)$ such that
\begin{align}\label{cutoffdef}
0 \leq\theta_j\leq 1 \  \mbox{and}  \ \theta_j \equiv 1\ \mbox{on} \  \omega^{j-1}_0.
\end{align}

\null

\begin{proof}[Proof of Theorem \ref{CarlemanP}]
For an easier comprehension, the proof is divided  into several steps.

\null

\textbf{Step 1}: \textit{Carleman estimate for $v$}.

\null

 Let us consider  $\rho(t):=e^{\frac{3}{2}s\widehat{\alpha}}$
and write
 \begin{equation}\label{w+z}
 (\rho v, \rho \pi) = (w,q) + (z,r),
 \end{equation}
 where  $(w,q)$ and $(z,r)$ are the solutions of
\begin{equation}\label{eq:u1}
\left\lbrace \begin{array}{ll}
    -w_t - \Delta w + \nabla q = \rho F_3 & \mbox{in} \ Q, \\
    \nabla\cdot w = 0 & \mbox{in} \ Q, \\
    w = 0 & \mbox{on} \ \Sigma, \\
    w(T) = 0 & \mbox{in} \ \Omega,
 \end{array}\right.
\end{equation}
and
\begin{equation}\label{eq:u2}
\left\lbrace \begin{array}{ll}
    -z_t - \Delta z + \nabla r = -\rho' v & \mbox{in} \ Q, \\
    \nabla\cdot z = 0 & \mbox{in} \ Q, \\
    z = 0 & \mbox{on} \ \Sigma, \\
    z(T) = 0 & \mbox{in} \ \Omega,
 \end{array}\right.
\end{equation}
respectively.

For $w$,  Lemma \ref{regStokes10} yields
\begin{equation}\label{eq:regularity}
\|w\|^2_{L^2(0,T;\bold{H}^3(\Omega))} + \|w\|^2_{H^1(0,T;\bold{V})}\leq C \|\rho F_3\|^2_{L^2(0,T;\bold{V})}.
\end{equation}


For $z$, we prove the following Carleman estimate.

\begin{lemma}\label{CarlemanStokes}
There exist $C = C(\Omega, \omega_0)$ and  $\lambda_0 = \lambda_0(\Omega, \omega_0)$ such that,  for every $\lambda \geq \lambda_0$, there exists $s_0 = s_0(\Omega, \omega_0, \lambda,T)$ such that
\begin{align}\label{Z11}
s^5\int \! \! \! \int_Q&e^{2s\widehat{\alpha}}\widehat{\phi}^5 | z_2|^2 dx dt + s^5 \int \! \! \! \int_Q e^{2s\widehat{\alpha}}\widehat{\phi}^5|\rho|^2|v|^2 dx dt\nonumber \\
+& \sum_{i=1,3}\biggl(s^5\int \! \! \! \int_Q  e^{2s\alpha}\phi^5|\Delta z_i|^2dxdt + s^3\int \! \! \! \int_Q e^{2s\alpha}\phi^3| \nabla \Delta z_i|^2dx dt+  \widehat{I}_{-2}(s; \nabla \nabla \Delta z_i) \biggr)  \\
 & \leq \ C\sum_{i=1,3}\biggl( \|\rho F_3\|^2_{L^2(0,T;\bold{V})}  + s^5\int \! \! \! \int_{\omega_0^3 \times (0,T) } e^{2s\alpha}\phi^5|\Delta z_i|^2 dxdt
 \biggr).\nonumber
\end{align}
\end{lemma}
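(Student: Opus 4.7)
The plan is to exploit the divergence-free structure of \eqref{eq:u2} to reduce the Stokes problem to scalar parabolic equations for the components of $\Delta z$. Taking the divergence of the first equation in \eqref{eq:u2} and using $\nabla\cdot z=\nabla\cdot v=0$, one obtains $\Delta r=0$ in $Q$. Applying $\Delta$ componentwise to the equation then yields, for each $i=1,2,3$,
\begin{equation*}
-(\Delta z_i)_t-\Delta(\Delta z_i)=-\rho'\,\Delta v_i\quad\text{in }Q,
\end{equation*}
since the pressure contribution $\partial_i(\Delta r)$ vanishes. Thus each $\Delta z_i$ solves a scalar heat-type equation whose source depends only on $\Delta v$.

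The main step is then to apply the standard Carleman inequality for the heat equation, with weights $(e^{s\alpha},\phi)$, to $\Delta z_i$ for $i\in\{1,3\}$. This produces bounds on $s^5\iint_Q e^{2s\alpha}\phi^5|\Delta z_i|^2$, $s^3\iint_Q e^{2s\alpha}\phi^3|\nabla\Delta z_i|^2$ and, after spatial differentiation and iteration of the argument, on $\widehat{I}_{-2}(s;\nabla\nabla\Delta z_i)$ as well, in terms of a local observation of $\Delta z_i$ on $\omega_0^3\times(0,T)$ plus the source $\rho'\Delta v_i$. To deal with the source, I use the decomposition \eqref{w+z} to write $v=\rho^{-1}(w+z)$, so that
\begin{equation*}
\rho'\,\Delta v_i=(\rho'/\rho)\bigl(\Delta w_i+\Delta z_i\bigr).
\end{equation*}
Since $|\rho'/\rho|\le Cs\,\widehat{\phi}^{12/11}$, the contribution involving $\Delta z_i$ is absorbed into the left-hand side by choosing $s$ large enough, while the contribution involving $\Delta w_i$ is controlled directly by the Stokes regularity estimate \eqref{eq:regularity}, which produces the right-hand side term $\|\rho F_3\|^2_{L^2(0,T;\bold{V})}$.

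Once $\Delta z_1$ and $\Delta z_3$ are under control, the remaining terms on the left-hand side of \eqref{Z11} follow. For $z_2$ with the uniform-in-$x$ weight $\widehat{\phi}$, I use the divergence-free condition $\partial_2 z_2=-(\partial_1 z_1+\partial_3 z_3)$ together with the boundary condition $z_2|_{\partial\Omega}=0$: integrating in $x_2$ and using Poincar\'e-type inequalities, the weighted $L^2$-norm of $z_2$ is estimated by those of $\nabla z_1,\nabla z_3$, which are in turn recovered from the bounds on $\Delta z_1,\Delta z_3$ via standard elliptic regularity on each slice. The global bound on $\rho v$ follows from $\rho v=w+z$ by combining the Carleman bound on $z$ with the Stokes regularity of $w$.

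The principal technical obstacle will be the boundary traces of $\Delta z_i$. Although $z=0$ on $\Sigma$, the Stokes equation restricted to the boundary (where $v=0$) gives $\Delta z_i|_\Sigma=\partial_i r|_\Sigma$, which does not vanish. The boundary terms produced by integration by parts in the Carleman identity must therefore be treated carefully, exploiting the fact that $\eta_0\equiv 0$ on $\partial\Omega$ (so that $\phi$ is explicit on $\Sigma$ and $\nabla\eta_0$ points in the outward normal direction there), together with trace inequalities and the Stokes regularity of the pressure $r$; the property $\Delta r=0$ in $Q$ is especially useful since it allows $r$ to be recovered from its boundary data. Ensuring that all these boundary contributions can be absorbed into the left-hand side for $s$ sufficiently large is the most delicate point.
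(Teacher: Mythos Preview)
Your overall plan---eliminate the pressure via $\Delta r=0$, reduce to scalar heat equations for derivatives of $z_i$, use the splitting $\rho v=w+z$ to treat the source, and recover $z_2$ from the divergence-free condition---matches the paper's strategy. However, two of your steps are genuine gaps rather than routine details.

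First, you propose to apply ``the standard Carleman inequality for the heat equation'' to $\Delta z_i$. But $\Delta z_i$ satisfies neither homogeneous Dirichlet nor Neumann conditions: on $\Sigma$ one has $\Delta z_i=\partial_i r$, which is nonzero. The paper does not apply a Carleman estimate to $\Delta z_i$ directly; instead it differentiates further, sets $\widehat{Z}_i=\nabla\nabla\Delta z_i$, writes the source in divergence form, and applies the Imanuvilov--Puel--Yamamoto Carleman estimate for parabolic equations with \emph{nonhomogeneous} boundary data (Lemma~\ref{teo:Cnonhom}). That estimate yields only the weak quantity $\widehat{I}_{-2}(s;\widehat{Z}_i)$ together with boundary trace terms $s^{-1/2}\|e^{s\alpha}\phi^{-1/4}\widehat{Z}_i\|^2_{\bold{H}^{1/4,1/2}(\Sigma)}$ on the right-hand side; the lower-order terms $s^5\iint e^{2s\alpha}\phi^5|\Delta z_i|^2$ and $s^3\iint e^{2s\alpha}\phi^3|\nabla\Delta z_i|^2$ are then added via Lemma~\ref{lemmaCoron-Gue}, not obtained directly. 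Your ``spatial differentiation and iteration'' goes in the wrong direction.

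Second, your proposed treatment of the boundary terms---``trace inequalities and the Stokes regularity of the pressure $r$'', together with the harmonicity of $r$---does not work. The actual mechanism (Step~6 of the paper's proof) is a three-stage bootstrap in Stokes regularity: one introduces $\widetilde{z}=\widetilde{l}(t)z$, $z^*=l^*(t)z$, $\widehat{z}=\widehat{l}(t)z$ with carefully tuned time weights $\widetilde{l},l^*,\widehat{l}$, and applies Lemmas~\ref{regStokes} and~\ref{regStokes10} successively to place $\widehat{l}z$ in $L^2(0,T;\bold{H}^5)\cap H^1(0,T;\bold{H}^3)$, with right-hand side controlled by quantities already on the left of \eqref{Z10}. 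Only then can the $\bold{H}^{1/4,1/2}(\Sigma)$ trace of $\widehat{Z}_i$ be bounded and absorbed for $s$ large. Nothing about the pressure $r$ enters this argument; the point is regularity of $z$ itself, gained through repeated application of Stokes estimates with the right-hand side $-\rho'v$ rewritten via $\rho v=w+z$ at each stage.
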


\begin{remark} \label{remarkMamdou}
A similar result to Lemma \ref{CarlemanStokes} was obtained in \cite[Proposition 3.2]{Car-Guey}. However, we cannot apply that result to system \eqref{eq:u2} because it would give a global term in $z_2$ in the right hand-side which could not be absorbed by the left hand-side of the inequality. Moreover,  the regularity required for the vector function $F_3$ is not as optimal as in Lemma \ref{CarlemanStokes}.  For this reason, we give the proof of Lemma \ref{CarlemanStokes} in the Appendix \ref{proofcarlemanstokes}.
\end{remark}
\null


\textbf{Step 2.} \textit{Carleman inequality for $\Delta \varphi$.}

\null

We write $\rho\widehat{\phi}^{-9/2}\varphi =\eta + \psi$, where the functions  $\eta$ and $\psi$ stand to solve
\begin{equation}\label{x3}
\left |
\begin{array}{ll}
-\eta_t  - \Delta \eta  = \rho\widehat{\phi}^{-9/2}f_1   & \mbox{in}  \  \ Q,  \\
\frac{\partial \eta}{\partial \nu} =0 &    \mbox{on}  \  \   \Sigma, \\
\eta(T) = 0 &    \mbox{in}    \  \  \Omega
\end{array}
\right.
\end{equation}
and
\begin{equation}\label{x2-KS}
\left |
\begin{array}{ll}
-\psi_t  - \Delta \psi  = -M_0e^{-Mt}\rho\widehat{\phi}^{-9/2}\xi  +\rho\widehat{\phi}^{-9/2}v_3 - (\rho\widehat{\phi}^{-9/2})_t\varphi &     \mbox{in}  \  \ Q,  \\
\frac{\partial \psi}{\partial \nu} =0 &    \mbox{on}  \  \   \Sigma, \\
\psi(T) = 0 &    \mbox{in}    \  \  \Omega,
\end{array}
\right.
\end{equation}
respectively.

Using standard regularity estimates for the heat equation with Neumann boundary conditions, we have
\begin{equation}\label{energy-neu}
\|\eta \|^2_{H^1(0,T; L^2(\Omega))} +\|\eta\|^2_{L^2(0,T;H^2(\Omega))} \leq C \|\rho\widehat{\phi}^{-9/2}f_1\|^2_{L^2(Q)},
\end{equation}
for some $C>0$.

Next, from (\ref{x2-KS}) we see that
\begin{equation}\label{x2-1}
\left |
\begin{array}{ll}
-(\Delta\psi)_t  - \Delta (\Delta \psi)  = -M_0e^{-Mt}\rho\widehat{\phi}^{-9/2}\Delta \xi  +\rho\widehat{\phi}^{-9/2}\Delta v_3 - (\rho\widehat{\phi}^{-9/2})_t \Delta \varphi &     \mbox{in}  \  \ Q,  \\
\frac{\partial (\Delta \psi)}{\partial \nu} =\rho\widehat{\phi}^{-9/2}\frac{\partial v_3}{\partial \nu}  &    \mbox{on}  \  \   \Sigma, \\
\Delta \psi(T) = 0 &    \mbox{in}    \  \  \Omega.
\end{array}
\right.
\end{equation}
Applying \cite[Theorem 1]{C-G-B-P-1}, we have the following estimate
\begin{align}\label{x4-KS}
\widehat{I}_{0}(s, \Delta \psi ) \leq & \ C \biggl( s^3 \int \! \! \! \int_{\omega^4_0 \times (0,T)} e^{2s\alpha} \phi^3|\Delta \psi |^2dxdt + \int \! \! \! \int_Q e^{2s\alpha}\widehat{\phi}^{-9}|\rho|^2(|\Delta \xi|^2+ |\Delta v_3|^2)dxdt \nonumber \\
  & +s\int \! \! \! \int_{\Sigma} e^{2s\alpha} \widehat{\phi}^{-8}|\rho|^2|\frac{\partial v_3}{\partial \nu}|^2d\sigma dt  + s^{2+2/11} \int \! \! \! \int_Q e^{2s\alpha} \widehat{\phi}^{-6}|\rho|^2|\Delta \varphi|^2dxdt  \biggl),
\end{align}
for  any $s \geq s_0(\Omega, \omega_0, T,  \lambda)$ (a proof of  \eqref{x4-KS} is achieved taking into account that
$$|(\rho\widehat{\phi}^{-9/2})_t|  \leq Cs^{1+1/11} \widehat{\phi}^{-3}\rho, $$
since $$|\widehat{\alpha}_t| + |\widehat{\phi}_t| \leq C T \widehat{\phi}^{12/11}  \ \text{and}  \  |\widehat{\phi}^{-1}| \leq CT^{22}, $$
 for some $C = C(\Omega, \omega_0, \lambda)$ and any $ s \geq s_0(\Omega, \omega_0, \lambda, T)$).

 \null

Because $\rho\widehat{\phi}^{-9/2}\Delta \varphi = \Delta \psi +\Delta \eta$, estimate \eqref{x4-KS} gives
\begin{align}\label{x5-KS}
\widehat{I}_0(s, \Delta \psi ) \leq & C \biggl(  s^3\int \! \! \! \int_{\omega^4_0 \times (0,T)} e^{2s\alpha} \phi^3|\Delta \psi |^2dxdt + \int \! \! \! \int_Qe^{2s\alpha}\widehat{\phi}^{-9}|\rho|^2(|\Delta \xi|^2+ |\Delta v_3|^2)dxdt \nonumber \\
  &+s\int \! \! \! \int_{\Sigma} e^{2s\alpha} \widehat{\phi}^{-8}|\rho|^2|\frac{\partial v_3}{\partial \nu}|^2d\sigma dt   + s^{2+2/11}  \int \! \! \! \int_Q e^{2s\alpha} \phi^{3}(|\Delta \psi|^2 + |\Delta \eta|^2) dxdt  \biggl).
\end{align}
The last term on the right-hand side of \eqref{x5-KS}  can be estimated as follows
\begin{align}\label{x5-1}
s^{2+2/11} \int \! \! \! \int_Q e^{2s\alpha} \phi^{3}(|\Delta \psi|^2 + |\Delta \eta|^2) dxdt &\leq  C \|\rho\widehat{\phi}^{-9/2}f_1\|^2_{L^2(Q)}+\delta \widehat{I}_0(s, \Delta \psi ) ,
\end{align}
for any $\delta >0$ and  any $s \geq s_0(\Omega, \omega_0, T,  \lambda)$. Here we have used estimate \eqref{energy-neu} and the definition of $\widehat{I}_0(s, \Delta \psi )$.

Therefore, combining (\ref{energy-neu}),  (\ref{x5-KS}), (\ref{x5-1}),  we obtain
\begin{align}\label{x5-b}
 \widehat{I}_0(s, \Delta \psi )   + & \|\eta\|^2_{H^1(0,T; L^2(\Omega))} +\|\eta\|^2_{L^2(0,T;H^2(\Omega))} +s^3\int \! \! \! \int_Q e^{2s\alpha}\widehat{\phi}^{-6}|\rho|^2|\Delta \varphi|^2dxdt \nonumber \\
\leq  & \ C \biggl( s^3 \int \! \! \! \int_{\omega_0^4 \times (0,T)} e^{2s\alpha} \phi^3|\Delta \psi |^2dxdt + \int \! \! \! \int_Q e^{2s\alpha}\widehat{\phi}^{-9}|\rho|^2(|\Delta \xi|^2+ |\Delta v_3|^2)dxdt \nonumber \\
  &+s\int \! \! \! \int_{\Sigma} e^{2s\alpha} \widehat{\phi}^{-8}|\rho|^2|\frac{\partial v_3}{\partial \nu}|^2d\sigma dt  + \|\rho\widehat{\phi}^{-9/2}f_1\|^2_{L^2(Q)}\biggl).
\end{align}

\null

\textbf{Step 3.}  \textit{Carleman inequality for $\xi$.}

\null


We consider the function $\rho\widehat{\phi}^{-9/2}\xi$, which fulfills the following system:
\begin{equation}\label{y1}
\left |
\begin{array}{ll}
-(\rho\widehat{\phi}^{-9/2}\xi)_{t}  - \rho\widehat{\phi}^{-9/2}\Delta \xi +M\rho\widehat{\phi}^{-9/2}\xi  = \tilde{f}_2  &     \mbox{in}  \  \ Q,  \\
\frac{\partial  (\rho\hat{\phi}^{-9/2}\xi)}{\partial \nu} =0 &    \mbox{on}  \  \   \Sigma, \\
(\rho\widehat{\phi}^{-9/2}\xi)(T) = 0 &    \mbox{in}    \  \  \Omega,
\end{array}
\right.
\end{equation}
with  $\tilde{f}_2=  - M\rho\widehat{\phi}^{-9/2}\Delta \varphi - (\rho\widehat{\phi}^{-9/2})_t\xi + \rho\widehat{\phi}^{-9/2}f_2$.
 \\
From  Lemma \ref{lemma-4-CSG}, we have the estimate
\begin{align}\label{y2}
I_{2}(s, \rho\widehat{\phi}^{-9/2}\xi )  &\leq C\biggl(  s^5\int \! \! \! \int_{\omega_0^5 \times (0,T)}e^{2s\alpha}\widehat{\phi}^{-4}|\rho|^2|\xi|^2dxdt  + s^{4+2/11}\int \! \! \! \int_Q \widehat{\phi}^{-4}e^{2s\alpha}|\rho|^2 |\xi|^2  dxdt   \\
&   + s^2\int \! \! \! \int_Q \phi^2 e^{2s\alpha} (|\Delta \psi|^2 + |\Delta \eta|^2) dxdt+ s^2\int \! \! \! \int_Q  e^{2s\alpha}\widehat{\phi}^{-7} |\rho|^2|f_2|^2 dxdt \biggl).   \nonumber
\end{align}
Here we have used the fact that $|(\rho\widehat{\phi}^{-9/2})_t|  \leq Cs^{1+1/11} \widehat{\phi}^{-3}\rho$.

Using  estimate \eqref{x5-1} and the definition of $\widehat{I}_0(s, \Delta \psi )$, we see that
\begin{align}\label{y2}
I_{2}(s,\rho\widehat{\phi}^{-9/2}\xi )  & \leq C\biggl(  s^5\int \! \! \! \int_{\omega_0^5 \times (0,T)}e^{2s\alpha}\widehat{\phi}^{-4}|\rho|^2|\xi|^2dxdt     \\
 & + \int \! \! \! \int_Q \widehat{\phi}^{-9} |\rho|^2|f_1|^2dxdt + s^2\int \! \! \! \int_Q e^{2s\alpha } \widehat{\phi}^{-7}|\rho|^2|f_2|^2 dxdt \biggl) +\delta \widehat{I}_0(s,\Delta \psi) \nonumber,
\end{align}
for any $\delta >0$ and any  $s \geq s_0(\Omega, \omega_0,T, \lambda)$.


Adding \eqref{x5-b} and \eqref{y2},  absorbing the lower order terms,  we obtain
\begin{align}\label{xm}
&I_{2}(s, \rho\widehat{\phi}^{-9/2}\xi )+ \widehat{I}_0(s, \Delta \psi )+s^3\int \! \! \! \int_Q e^{2s\alpha}\widehat{\phi}^{-6}|\rho|^2|\Delta \varphi|^2dxdt  \nonumber \\
& \leq  C \biggl(  s^3\int \! \! \! \int_{\omega_0^4 \times (0,T)}e^{2s\alpha} \phi^3|\Delta \psi |^2dxdt + s^5\int \! \! \! \int_{\omega_0^5 \times (0,T)}e^{2s\alpha}\phi^{-4}|\rho|^2|\xi|^2dxdt   \nonumber \\
&  +\int \! \! \! \int_Q \widehat{\phi}^{-9} |\rho|^2|f_1|^2 dxdt + s^2\int \! \! \! \int_Q  e^{2s\alpha}\widehat{\phi}^{-7}|\rho|^2 |f_2|^2 dxdt \nonumber \\
 &+  \int \! \! \! \int_Q e^{2s\alpha}\widehat{\phi}^{-9} |\rho|^2|\Delta v_3|^2dxdt +s\int \! \! \! \int_{\Sigma} e^{2s\alpha} \widehat{\phi}^{-8}|\rho|^2|\frac{\partial v_3}{\partial \nu}|^2d\sigma dt\biggl),
\end{align}
for  any $s \geq s_0(\Omega, \omega_0,T, \lambda)$.

\null


\textbf{Step 4.} \textit{Estimate of a local integral of $\Delta z_3$.}

\null

In this step we estimate the local integral of $\Delta z_3$ in the right-hand side of \eqref{Z11} in Lemma \ref{CarlemanStokes}.

We begin using \eqref{x2-1} to see that
\begin{align}\label{estlocz_3}
&s^5\int \! \! \! \int_{\omega_0^3 \times (0,T) } e^{2s\alpha}\phi^5|\Delta z_3|^2 dxdt \nonumber \\
& = s^5\int \! \! \! \int_{\omega_0^3 \times (0,T) } e^{2s\alpha}\phi^5\Delta z_3\biggl( -\widehat{\phi}^{9/2}\bigl((\Delta\psi)_t  +\Delta (\Delta \psi)-(\rho\widehat{\phi}^{-9/2})_t \Delta \varphi\bigl) +M_0e^{-Mt}\rho\Delta \xi  -\Delta w_3\biggl) dxdt.
\end{align}
We estimate each one of the terms in the right-hand side of \eqref{estlocz_3}.

The first term is estimated as follows:
\begin{align}\label{AX0}
s^5\int \! \! \! \int_{\omega_0^3 \times (0,T) }& e^{2s\alpha}\phi^5\widehat{\phi}^{9/2}\Delta z_3(\Delta\psi)_t dxdt \nonumber \\
&=-s^5\int \! \! \! \int_{\omega_0^3 \times (0,T)} (e^{2s\alpha}\phi^5\widehat{\phi}^{9/2})_t \Delta z_3 \Delta\psi dxdt -s^5\int \! \! \! \int_{\omega_0^3 \times (0,T) } e^{2s\alpha}\phi^5\widehat{\phi}^{9/2}(\Delta z_3)_t \Delta\psi dxdt.
\end{align}
We have
\begin{align}\label{AX1}
|s^5\int \! \! \! \int_{\omega_0^3 \times (0,T)} &(e^{2s\alpha}\phi^5\widehat{\phi}^{9/2})_t \Delta z_3 \Delta\psi dxdt| \nonumber \\
&\leq  Cs^9\int \! \! \! \int_{\omega_0^4 \times (0,T)} e^{2s\alpha} \widehat{\phi}^{177/11}|\Delta \psi |^2dxdt  +\delta  s^5 \int \! \! \! \int_Q e^{2s\alpha}\widehat{\phi}^{5} |\Delta z_3|^2dxdt,
\end{align}
because
$$
|(e^{2s\alpha}\phi^5\widehat{\phi}^{9/2})_t| \leq Cs^{1+1/11}\widehat{\phi}^{6+1/11+9/2}e^{2s\alpha}.
$$
For the other term in \eqref{AX0}, we use \eqref{eq:u2} to see that
$$
-(\Delta z_3)_t - \Delta^2 z_3 = -\rho' \Delta v_3
$$
and write
\begin{align}\label{localestv31}
s^5\int \! \! \! \int_{\omega_0^3 \times (0,T) } e^{2s\alpha}\phi^5\widehat{\phi}^{9/2}(\Delta z_3)_t \Delta\psi dxdt =s^5\int \! \! \! \int_{\omega_0^3 \times (0,T) } e^{2s\alpha}\phi^5\widehat{\phi}^{9/2} \Delta\psi ( - \Delta^2 z_3 +\rho' \Delta v_3) dxdt.
\end{align}
Let us now estimate the terms on the right-hand side of \eqref{localestv31}.

It is not difficult to see that
\begin{align}\label{AX2}
|s^5\int \! \! \! \int_{\omega_0^3 \times (0,T) } e^{2s\alpha}\phi^5\widehat{\phi}^{9/2} \Delta\psi \Delta^2 z_3  dxdt| \leq Cs^9\int \! \! \! \int_{\omega_0^4 \times (0,T)} e^{2s\alpha} \widehat{\phi}^{18}|\Delta \psi |^2dxdt +    \delta \widehat{I}_{-2}(s, \nabla \nabla \Delta z_3).
\end{align}
We also have
\begin{align}\label{AX3}
|s^5\int \! \! \! \int_{\omega_0^3 \times (0,T) } &e^{2s\alpha}\phi^5\widehat{\phi}^{9/2} \Delta\psi \rho' \Delta v_3 dxdt | = |s^5\int \! \! \! \int_{\omega_0^3 \times (0,T) } e^{2s\alpha}\phi^5\widehat{\phi}^{9/2} \Delta\psi \rho' \rho^{-1} \Delta (z_3 + w_3)dxdt| \nonumber \\
&\leq C\biggl(s^9\int \! \! \! \int_{\omega_0^4 \times (0,T)} e^{2s\alpha} \widehat{\phi}^{178/11}|\Delta \psi |^2dxdt  + \|\rho F_3\|^2_{L^2(0,T;\bold{V})}\biggl) +\delta  s^5 \int \! \! \! \int_Q e^{2s\alpha}\widehat{\phi}^{5} |\Delta z_3|^2dxdt.
\end{align}
Here we have used estimate  \eqref{eq:regularity} and the fact that $ |\rho' \rho^{-1} | \leq Cs^{1+1/11} \widehat{\phi}^{1+1/11}$.

For the second term in \eqref{estlocz_3}, we have
\begin{align}\label{AX4}
 |s^5\int \! \! \! \int_{\omega_0^3 \times (0,T) }& e^{2s\alpha}\phi^5\widehat{\phi}^{9/2}\Delta z_3 \Delta (\Delta \psi) dxdt|  \\
\leq |s^5\int \! \! \! \int_{\omega_0^4 \times (0,T)}&\bigl( \Delta (\theta_4e^{2s\alpha}\phi^5\widehat{\phi}^{9/2}) \Delta z_3 +  2\nabla (\theta_4e^{2s\alpha}\phi^5\widehat{\phi}^{9/2}) \cdot \nabla\Delta z_3 + \theta_4e^{2s\alpha}\phi^5\widehat{\phi}^{9/2} \Delta^2 z_3 \bigl) \Delta \psi dxdt| \nonumber \\
 \leq   Cs^9&\int \! \! \! \int_{\omega_0^4 \times (0,T)}e^{2s\alpha} \widehat{\phi}^{18}|\Delta \psi |^2dxdt  \nonumber \\
&+\delta  \bigl(s^5 \int \! \! \! \int_Q e^{2s\alpha}\widehat{\phi}^{5} |\Delta z_3|^2dxdt+  s^3\int \! \! \! \int_Q e^{2s\alpha}\phi^3| \nabla \Delta z_3|^2dx dt + \widehat{I}_{-2}(s, \nabla \nabla \Delta z_3)\bigl), \nonumber
\end{align}
because
$$
|\nabla (\theta_4e^{2s\alpha}\phi^5\widehat{\phi}^{9/2})| \leq Cs\widehat{\phi}^{21/2}e^{2s\alpha}1_{\omega_0^4} \ \mbox{and}  \ |\Delta (\theta_4 e^{2s\alpha}\phi^5\widehat{\phi}^{9/2}) | \leq Cs^2\widehat{\phi}^{23/2}e^{2s\alpha}1_{\omega_0^4}.
$$

We estimate the other three terms in  \eqref{estlocz_3} as follows.

For the term in $\Delta \xi$, we use integration by parts to get
\begin{align} \label{AX6}
|s^5&\int \! \! \! \int_{\omega_0^3 \times (0,T) } e^{2s\alpha}\phi^5 \rho \Delta z_3 \Delta \xi   dxdt| \nonumber \\
&\leq s^5|\int \! \! \! \int_{\omega_0^4 \times (0,T) }\bigl(  \Delta( \theta_4e^{2s\alpha}\phi^5 \rho)\Delta z_3  + 2\nabla \Delta z_3  \cdot \nabla( \theta_4e^{2s\alpha}\phi^5 \rho) + \theta_4e^{2s\alpha}\phi^5 \rho \Delta^2 z_3   \bigl)\xi   dxdt| \nonumber \\
& \leq  C s^9\int \! \! \! \int_{\omega_0^4 \times (0,T) } e^{2s\alpha}\phi^9 |\rho|^2 |\xi|^2   dxdt+\delta\bigl(  s^5 \int \! \! \! \int_Q e^{2s\alpha}\phi^{5} |\Delta z_3|^2dxdt  +  s^3\int \! \! \! \int_Q e^{2s\alpha}\phi^3| \nabla \Delta z_3|^2dx dt\bigl)
\end{align}
because
$$
|\nabla( \theta_4e^{2s\alpha}\phi^5 \rho )| \leq Cs\phi^6 e^{2s\alpha} \rho 1_{\omega_0^4} \ \mbox{and}  \  | \Delta (\theta_4e^{2s\alpha}\rho\phi^5)| \leq Cs^2\phi^7 e^{2s\alpha} \rho 1_{\omega_0^4}.
$$
For the term in $\Delta w_3$,  estimate \eqref{eq:regularity} gives
\begin{align} \label{AX7}
s^5\int \! \! \! \int_{\omega_0^3 \times (0,T) } e^{2s\alpha}\phi^5  \Delta z_3 \Delta w_3   dxdt \leq C\|\rho F_3\|^2_{L^2(0,T;\bold{V})} +\delta  s^5 \int \! \! \! \int_Q e^{2s\alpha}\widehat{\phi}^{5} |\Delta z_3|^2dxdt.
\end{align}
Finally, for the last term we have
\begin{align} \label{AX8}
|s^5&\int \! \! \! \int_{\omega_0^3 \times (0,T) } e^{2s\alpha}\phi^5 \widehat{\phi}^{9/2} \Delta z_3  (\rho\widehat{\phi}^{-9/2})_t \Delta \varphi dxdt| \leq Cs^7|\int \! \! \! \int_{\omega_0^3 \times (0,T) } e^{2s\alpha}\widehat{\phi}^{6}\phi^5 \Delta z_3  (\Delta \psi +\Delta \eta)    dxdt| \nonumber \\
&\leq Cs^9\int \! \! \! \int_{\omega_0^3 \times (0,T)} e^{2s\alpha} \widehat{\phi}^{12}\phi^5|\Delta \psi |^2dxdt + C\|\rho F_3\|^2_{L^2(0,T;\bold{V})}+\delta  s^5 \int \! \! \! \int_Q e^{2s\alpha}\phi^{5} |\Delta z_3|^2dxdt,
\end{align}
because
$$|(\rho\widehat{\phi}^{-9/2})_t|  \leq Cs^{1+1/11} \widehat{\phi}^{-3}\rho.$$
\null
Thus, we have the following estimate for the local integral of $\Delta z_3$:
\begin{align}\label{AX10}
s^5\int \! \! \! \int_{\omega_0^3 \times (0,T) } & e^{2s\alpha}\phi^5|\Delta z_3|^2 dxdt \nonumber \\
\leq C &\biggl(s^9\int \! \! \! \int_{\omega_0^4 \times (0,T)}e^{2s\alpha} \widehat{\phi}^{18}|\Delta \psi |^2dxdt+ s^9\int \! \! \! \int_{\omega_0^4 \times (0,T) } e^{2s\alpha}\phi^9 |\rho|^2 |\xi|^2dxdt  +\|\rho F_3\|^2_{L^2(0,T;\bold{V})}\biggl) \nonumber \\
&+\delta  \bigl(s^5 \int \! \! \! \int_Q e^{2s\alpha}\widehat{\phi}^{5} |\Delta z_3|^2dxdt+  s^3\int \! \! \! \int_Q e^{2s\alpha}\phi^3| \nabla \Delta z_3|^2dx dt + \widehat{I}_{-2}(s, \nabla \nabla \Delta z_3)\bigl).
\end{align}


\textbf{Step 5.} \textit{Estimate of a local integral of $\Delta \psi$}.

 \null

In this step, we estimate the local integral of  $\Delta \psi$ in the right-hand side of \eqref{AX10}. For that, we use \eqref{adjoint-1} to write
\begin{align}\label{local}
& s^9\int \! \! \! \int_{\omega_0^4 \times (0,T)} e^{2s\alpha} \phi^{18}|\Delta \psi |^2dxdt  \\
& \leq \frac{1}{M}s^9\int \! \! \! \int_{\omega_0^5 \times (0,T)}\theta_5 e^{2s\alpha} \phi^{18}\widehat{\phi}^{-9/2}\rho\Delta \psi( \xi_t + \Delta \xi -M\xi +  f_2 - M\rho^{-1}\widehat{\phi}^{9/2}\Delta \eta)dxdt. \nonumber
\end{align}

The rest of this step is  devoted to estimate each  one of the terms in the right-hand side of the above integral. For the first term, we have the following estimate

\begin{claim}\label{claim1}
For any $\delta >0$, there exists $C>0$ such that
\begin{align}\label{M1X}
 |s^9\int \! \! \! \int_{\omega_0^5 \times (0,T)}&\theta_5 e^{2s\alpha} \phi^{18}\widehat{\phi}^{-9/2} \rho\Delta \psi \xi_t dxdt  |  \nonumber\\
 \leq &\  C\bigl(s^{33}\int \! \! \! \int_{\omega_0^6 \times (0,T)} e^{2s\alpha} \widehat{\phi}^{61}|\rho|^2|\xi|^2 dxdt  + \|\rho F_3\|^2_{L^2(0,T;\bold{V})}  \bigl) \nonumber \\
  & + \delta \bigl(I_2(s,\rho \widehat{\phi}^{-9/2} \xi)+  \widehat{I}_0(s,\Delta \psi)+ s^5\int \! \! \! \int_Q  e^{2s\alpha}\phi^5|\Delta z_3|^2dxdt\bigl).
\end{align}
\end{claim}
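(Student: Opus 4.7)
The plan is to integrate by parts in time to strip off $\xi_{t}$, then to substitute the equation for $\Delta\psi$ to control the resulting $(\Delta\psi)_{t}$ term. Since $\rho=e^{\frac{3}{2}s\widehat{\alpha}}$ vanishes at $t=0$ and $t=T$ (because $\widehat{\alpha}\to-\infty$ there) no temporal boundary terms appear, and we get
\begin{equation*}
A_1 := -s^{9}\!\!\iint_{\omega_{0}^{5}\times(0,T)}\!\! \partial_{t}\bigl(\theta_{5}e^{2s\alpha}\phi^{18}\widehat{\phi}^{-9/2}\rho\bigr)\Delta\psi\,\xi\,dx\,dt, \quad A_2 := -s^{9}\!\!\iint_{\omega_{0}^{5}\times(0,T)}\!\!\theta_{5}e^{2s\alpha}\phi^{18}\widehat{\phi}^{-9/2}\rho\,(\Delta\psi)_{t}\,\xi\,dx\,dt,
\end{equation*}
so that the left-hand side of the claim equals $A_{1}+A_{2}$.

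For $A_{1}$, the estimates $|\alpha_{t}|,|\phi_{t}|,|\widehat{\phi}_{t}|\leq CT\widehat{\phi}^{12/11}$ and $|\rho_{t}|\leq Cs\widehat{\phi}^{12/11}\rho$ give the pointwise bound $|\partial_{t}(\theta_{5}e^{2s\alpha}\phi^{18}\widehat{\phi}^{-9/2}\rho)|\leq Cs\,\widehat{\phi}^{27/2+1/11}e^{2s\alpha}\rho\,\mathbf{1}_{\omega_{0}^{5}}$. A weighted Cauchy--Schwarz then produces $\delta\,\widehat{I}_{0}(s,\Delta\psi)$ together with a local integral in $\xi$ of the form $Cs^{17+2/11}\iint_{\omega_{0}^{5}\times(0,T)}e^{2s\alpha}\widehat{\phi}^{24+2/11}\rho^{2}|\xi|^{2}$, which is comfortably dominated by the $s^{33}\widehat{\phi}^{61}$ term on the right-hand side.

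For $A_{2}$, substitute $(\Delta\psi)_{t}$ from equation \eqref{x2-1}:
\begin{equation*}
(\Delta\psi)_{t}=-\Delta(\Delta\psi)+M_{0}e^{-Mt}\rho\widehat{\phi}^{-9/2}\Delta\xi-\rho\widehat{\phi}^{-9/2}\Delta v_{3}+(\rho\widehat{\phi}^{-9/2})_{t}\Delta\varphi,
\end{equation*}
and split $A_{2}$ into four contributions. The $\Delta(\Delta\psi)$ and $\Delta\xi$ contributions are handled by integration by parts in space (moving the extra Laplacians onto the weights and $\xi$, using that $\theta_{5}\in C_{0}^{\infty}(\omega_{0}^{5})$ to kill boundary terms and introducing an intermediate cutoff $\theta_{6}\in C_{0}^{\infty}(\omega_{0}^{6})$ to enlarge the support); Cauchy--Schwarz then distributes the outcomes into $\delta\widehat{I}_{0}(s,\Delta\psi)$, $\delta I_{2}(s,\rho\widehat{\phi}^{-9/2}\xi)$ (which, thanks to the $|\Delta q|^{2}$-term in the definition \eqref{ineq-epsxi} of $I_{2}$ with $\beta=2$, absorbs the derivatives up to second order of $\xi$), and a residual local $\xi$ integral controlled by $Cs^{33}\iint_{\omega_{0}^{6}\times(0,T)}e^{2s\alpha}\widehat{\phi}^{61}\rho^{2}|\xi|^{2}$. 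The $\Delta v_{3}$ contribution is treated via the decomposition $v_{3}=\rho^{-1}(w_{3}+z_{3})$ of \eqref{w+z}: the $w_{3}$ part is absorbed by $C\|\rho F_{3}\|_{L^{2}(0,T;\mathbf{V})}^{2}$ thanks to the regularity estimate \eqref{eq:regularity}, while the $z_{3}$ part goes into $\delta s^{5}\iint e^{2s\alpha}\phi^{5}|\Delta z_{3}|^{2}$ after one integration by parts in space on the smooth weight. Finally, using $|(\rho\widehat{\phi}^{-9/2})_{t}|\leq Cs^{1+1/11}\widehat{\phi}^{-3}\rho$, the $\Delta\varphi$ contribution is split by Cauchy--Schwarz against the LHS term $s^{3}\iint e^{2s\alpha}\widehat{\phi}^{-6}\rho^{2}|\Delta\varphi|^{2}$ already present in \eqref{xm}, and the residual local $\xi$ integral lands again inside the $s^{33}\widehat{\phi}^{61}$ bound.

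The main obstacle is the precise bookkeeping of the powers of $s$ and $\widehat{\phi}$ through each integration by parts and Cauchy--Schwarz: every time a spatial or temporal derivative is moved, a factor $\widehat{\phi}^{1/11}$ (and possibly $s$) is introduced, and one must verify that after all substitutions the local $\xi$ integrals land at precisely $(s^{33},\widehat{\phi}^{61})$ and that the $\Delta\psi$, $\Delta z_{3}$, and $\Delta\varphi$ contributions can indeed be absorbed by the $\delta$-small multiples of $\widehat{I}_{0}$, $I_{2}$, $s^{5}\iint e^{2s\alpha}\phi^{5}|\Delta z_{3}|^{2}$ and the existing LHS term in \eqref{xm}.
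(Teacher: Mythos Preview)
Your strategy matches the paper's: integrate by parts in $t$, substitute \eqref{x2-1} for $(\Delta\psi)_t$, and estimate each resulting piece by spatial integration by parts and weighted Cauchy--Schwarz. The treatments of the $\Delta(\Delta\psi)$, $\Delta\xi$ and $\Delta v_3$ contributions are essentially identical to the paper's (one remark: for the $\Delta z_3$ piece no spatial integration by parts is needed---direct Cauchy--Schwarz already yields $\delta s^5\iint e^{2s\alpha}\phi^5|\Delta z_3|^2$, as in the paper's \eqref{QX4}).

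The one genuine deviation is your handling of the $(\rho\widehat{\phi}^{-9/2})_t\Delta\varphi$ term. You propose to absorb it into a $\delta$-multiple of the global quantity $s^3\iint_Q e^{2s\alpha}\widehat{\phi}^{-6}\rho^2|\Delta\varphi|^2$ from \eqref{xm}, but that term is \emph{not} on the right-hand side of the claim as stated, so strictly speaking you are not proving Claim~\ref{claim1}. The paper instead uses the splitting $\rho\widehat{\phi}^{-9/2}\Delta\varphi=\Delta\psi+\Delta\eta$, so that the $\Delta\psi$ part goes directly into $\delta\,\widehat{I}_0(s,\Delta\psi)$ (which \emph{is} allowed), while the $\Delta\eta$ part is controlled by $\|\rho\widehat{\phi}^{-9/2}f_1\|_{L^2(Q)}^2$ via \eqref{energy-neu}. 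Your route is perfectly fine for the overall Carleman estimate in Theorem~\ref{CarlemanP} (where the global $\Delta\varphi$ term is indeed available on the left), but to match the claim as a standalone statement you should use the $\Delta\psi+\Delta\eta$ decomposition instead of invoking \eqref{xm}.
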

We prove Claim \ref{claim1} in appendix \ref{proofclaim1}.

Next, we integrate by  parts  the second term in   \eqref{local} to obtain
\begin{align}\label{klds}
s^9\int \! \! \! \int_{\omega_0^5 \times (0,T)}\theta_5 e^{2s\alpha}\rho \phi^{18}\widehat{\phi}^{-9/2}& \Delta \psi \Delta \xi  dxdt \nonumber \\
& \ \  \ \ =  -s^9\int \! \! \! \int_{\omega_0^5\times (0,T)}\Delta \psi\nabla (e^{2s\alpha}\rho \phi^{18}\widehat{\phi}^{-9/2}\theta_5) \cdot  \nabla \xi  dxdt \nonumber \\
 & \ \  \ \ - s^9\int \! \! \! \int_{\omega_0^5 \times (0,T)}\theta_5 e^{2s\alpha}\rho \phi^{18}\widehat{\phi}^{-9/2} \nabla (\Delta \psi) \cdot  \nabla \xi  dxdt. \nonumber  \\
 &\leq Cs^{17}\int \! \! \! \int_{\omega_0^5 \times (0,T)}e^{2s\alpha}\widehat{\phi}^{26}|\rho|^2|\nabla \xi|^2dxdt +\delta \widehat{I}_0(s,\Delta \psi).
\end{align}
because
$$
|\nabla (e^{2s\alpha}\rho \phi^{18}\widehat{\phi}^{-9/2}\theta_5)| \leq Cs \widehat{\phi}^{29/2} \rho e^{2s\alpha}1_{\omega_0^5}.
$$
Next,
\begin{align}\label{QX20}
s^{17}\int \! \! \! \int_{\omega_0^6 \times (0,T)}\theta_6e^{2s\alpha}\widehat{\phi}^{26}|\rho|^2|\nabla \xi|^2dxdt  & = -s^{17}\int \! \! \! \int_{\omega_0^6\times (0,T)}\theta_6 e^{2s\alpha}\widehat{\phi}^{26}|\rho|^2\Delta \xi \xi dxdt  \nonumber \\
&+ \frac{s^{17}}{2}\int \! \! \! \int_{\omega_0^6 \times (0,T)}\Delta (\theta_6 e^{2s\alpha}\widehat{\phi}^{26}|\rho|^2)  |\xi|^2 dxdt  \\
& \leq C s^{33}\int \! \! \! \int_{\omega_0^6 \times (0,T)} e^{2s\alpha} \widehat{\phi}^{61}|\rho|^2|\xi|^2 dxdt + \delta I_2(s,\rho \widehat{\phi}^{-9/2} \xi),\nonumber
\end{align}
because
$$
|\Delta (\theta_6 e^{2s\alpha}\widehat{\phi}^{26}|\rho|^2) | \leq Cs^2\widehat{\phi}^{28} |\rho|^2e^{2s\alpha}1_{\omega_0^6}.
$$

\null

Finally, for the last three terms, we have
\begin{align}\label{M14}
\left|s^9\int \! \! \! \int_{\omega_0^5 \times (0,T)}\theta_5 e^{2s\alpha} \phi^{18}\widehat{\phi}^{-9/2}\rho \Delta \psi \xi dxdt\right|  \leq  C  s^{15}\int \! \! \! \int_{\omega_0^5 \times (0,T)}\widehat{\phi}^{24}  e^{2s\alpha} |\rho|^2|\xi|^2dxdt + \delta \widehat{I}_0(s, \Delta \psi),
\end{align}
\begin{align}\label{M4}
|s^9\int \! \! \! \int_{\omega_0^5 \times (0,T)}\theta_5 &e^{2s\alpha}\rho \phi^{18}\widehat{\phi}^{-9/2} \Delta \psi f_2 dxdt | \leq  C  s^{15}\int \! \! \! \int_{\omega_0^5 \times (0,T)}\widehat{\phi}^{24}  e^{2s\alpha}|\rho|^2 |f_2|^2dxdt + \delta \widehat{I}_0(s, \Delta \psi)
\end{align}
and
\begin{align}\label{M5}
|s^9\int \! \! \! \int_{\omega_0^5 \times (0,T)}\theta_5 e^{2s\alpha}\phi^{18}\Delta \psi \Delta \eta dxdt|   \leq   C\int \! \! \! \int_Q\widehat{\phi}^{-9}|\rho|^2|f_1|^2dxdt + \delta \widehat{I}_0(s, \Delta \psi).
\end{align}
Gathering  \eqref{xm}, \eqref{local}-\eqref{M5}, we obtain,   after absorbing the lower order terms, the estimate:
\begin{align}\label{xk}
&   \widehat{I}_0(s, \Delta \psi)+I_2(s, \rho\widehat{\phi}^{-9/2}\xi) +\int \! \! \! \int_Q e^{2s\alpha}\widehat{\phi}^{-6}|\rho|^2|\Delta \varphi|^2dxdt +\int \! \! \! \int_Q e^{2s\widehat{\alpha}}\widehat{\phi}^{-6}|\rho|^2|\nabla \varphi|^2dxdt  \nonumber \\
&\leq  C\biggl(s^{33}\int \! \! \! \int_{\omega_0^6 \times (0,T)} e^{2s\alpha} \widehat{\phi}^{61}|\rho|^2|\xi|^2 dxdt  +  \int \! \! \! \int_Q  \widehat{\phi}^{-9}|\rho|^2|f_1|^2 dxdt   + s^{15}\int \! \! \! \int_{Q}\widehat{\phi}^{24}  e^{2s\alpha}|\rho|^2 |f_2|^2dxdt \nonumber \\
 & \ +  \int \! \! \! \int_Q e^{2s\alpha}\widehat{\phi}^{-9} |\rho|^2|\Delta v_3|^2dxdt +s\int \! \! \! \int_{\Sigma} e^{2s\alpha} \widehat{\phi}^{-8}|\rho|^2|\frac{\partial v_3}{\partial \nu}|^2d\sigma dt\biggl),
\end{align}
for  $C = C(\Omega, \omega)$ and every $s  \geq s_0(\Omega, \omega,T, \lambda) $. Notice that we can add the last term in the lef-hand side of \eqref{xk}  because $\frac{\partial \varphi}{\partial \nu }= 0$.

To finish the proof, we notice that
\begin{align}\label{BTQ1}
|\int \! \! \! \int_Q e^{2s\alpha}\widehat{\phi}^{-9}|\rho|^2 |\Delta v_3|^2dxdt | &+ s\int \! \! \! \int_{\Sigma} e^{2s\alpha}\widehat{\phi}^{-8}|\rho|^2|\frac{\partial v_3}{\partial \nu}|^2d\sigma dt \\
& \leq C s\int \! \! \! \int_Q e^{2s\alpha}\widehat{\phi}^{-8}|\Delta(z_3+w_3)|^2dxdt   \nonumber \\
&\leq   C\|\rho F_3\|^2_{L^2(0,T;\bold{V})} +\delta  s^5 \int \! \! \! \int_Q e^{2s\alpha}\widehat{\phi}^{5} |\Delta z_3|^2dxdt, \nonumber
\end{align}
for any $\delta >0$. Moreover, we also have

\begin{align}\label{VB1}
&|s^5\int \! \! \! \int_{\omega_0^3 \times (0,T) } e^{2s\alpha}\phi^5|\Delta z_1|^2 dxdt| \nonumber  \\
& \leq s^5|\int \! \! \! \int_{\omega_0^4 \times (0,T) } ( \Delta (\theta_4e^{2s\alpha}\phi^5)\Delta z_1  + 2\nabla(\theta_4e^{2s\alpha}\phi^5) \cdot  \nabla\Delta  z_1+  \theta_4e^{2s\alpha}\phi^5\Delta^2 z_1)z_1dxdt|  \nonumber \\
& \leq Cs^9\int \! \! \! \int_{\omega_0^4 \times (0,T) } e^{2s\alpha}\phi^9| z_1|^2 dxdt +\delta  \bigl(s^5 \int \! \! \! \int_Q e^{2s\alpha}\widehat{\phi}^{5} |\Delta z_1|^2dxdt \nonumber \\
&+  s^3\int \! \! \! \int_Q e^{2s\alpha}\phi^3| \nabla \Delta z_1|^2dx dt + \widehat{I}_{-2}(s, \nabla \nabla \Delta z_1)\bigl),
\end{align}
\null
since
$$
|\nabla(\theta_4e^{2s\alpha}\phi^5)| \leq Cs\phi^6  e^{2s\alpha}1_{\omega_0^4} \ \mbox{and} \  |\Delta (\theta_4e^{2s\alpha}\phi^5)| \leq C s^2\phi^7 e^{2s\alpha}1_{\omega_0^4}.
$$

From \eqref{Z11}, \eqref{AX10},  \eqref{xk}, \eqref{BTQ1} and \eqref{VB1},  we finish the proof of Theorem \ref{CarlemanP}.

\end{proof}

\section{Null controllability for the linear system}\label{sect3}

In this section we  solve the null controllability problem for the system \eqref{system-1}, with a right-hand side which decays exponentially as $t \rightarrow T^-$.


Indeed, we consider the system
\begin{equation}\label{system-1-L}
\left |
\begin{array}{ll}
\mathcal{L}(n,c, u) + (0,0, \nabla p)= (h_1,h_2+g_1\chi_{\omega_1}, H_3+g_2e_{N-2}\chi_{\omega_2}), \\
\nabla \cdot u =0 &     \mbox{in}  \  \ Q, \\
\frac{\partial n}{\partial \nu} = \frac{\partial c}{\partial \nu} = 0; \ u = 0    &    \mbox{on}  \  \   \Sigma, \\
n(x,0) = n_0;  \ c(x,0) = c_0; \ u(x,0) = u_0  &    \mbox{in}    \  \  \Omega,
\end{array}
\right.
\end{equation}
where
\begin{align}\label{system-1-L-1}
\mathcal{L}(n,c, u)& = \bigl(n_{t}    - \Delta n +M\Delta c , c_t   - \Delta c +Mc + M_0e^{-Mt}n, u_t - \Delta u -ne_N \bigl) \nonumber \\
&:= (\mathcal{L}_1, \mathcal{L}_2, \mathcal{L}_3)(n,c, u).
\end{align}
The aim is to  find $(g_1\chi_{\omega_1}, g_2\chi_{\omega_2})\in L^2(0,T; H^1(\Omega))  \times L^2(Q)$    ($g_2\equiv 0$, if $N=2$) such that the solution of  \eqref{system-1-L} satisfies
\be\label{system-1-L-2}
n(x,T) = c(x,T) = u(x,T) =0.
\ee
Furthermore, it will be necessary to solve \eqref{system-1-L} - \eqref{system-1-L-2}   in some appropriate weighted space. Before introducing such spaces, we improve the Carleman estimate given in Theorem \ref{CarlemanP}. This new Carleman inequality will only contain weight functions that do not vanish at $t=0$.

Let us consider a positive $C^{\infty}([0,T])$  function such that
\begin{equation}\label{system-linear-8}
\tilde{\ell}(t) = \left\{
\begin{array}{llcc}
\ell(T/2)  & \text{if} \ 0 \leq t \leq T/2    \\
\ell(t) & \text{if} \  3T/4 \leq t \leq T,
\end{array}
\right.
\end{equation}
and define our new weight functions as
$$
\beta(x,t) = \frac{e^{\lambda \eta_0(x)} - e^{2\lambda||\eta_0||_{\infty}}}{\tilde{\ell}(t)^{11}}, \ \gamma(x,t) = \frac{e^{\lambda \eta_0(x)}}{\tilde{\ell}(t)^{11}},
$$
\begin{align} \label{weightfunctions1-4}
\widehat{\gamma}(t) = \min_{x \in \overline{\Omega}} \gamma(x,t), \ \gamma^*(t) = \max_{x \in \overline{\Omega}} \phi(x,t), \ \beta^*(t) = \max_{x \in \overline{\Omega}}\beta (x,t),\ \widehat{\beta} = \min_{x \in \overline{\Omega}}\beta (x,t).
\end{align}
\\
With these new weights, we state our refined Carleman estimate as follows.

\begin{proposition}\label{CarlemanT=t}
Let  $(\varphi_T, \xi_T,v_T) \in L^2(\Omega)\times L^2(\Omega) \times \bold{H}$ and  $(f_1, f_2,F_3) \in L^2(Q)\times L^2(Q)\times L^2(0,T;\bold{V})$. There exists a positive constant $C$ depending on $T$, $s$ and $\lambda$, such that every solution of \eqref{adjoint-1} verifies:
\begin{align}\label{ME0}
&  \int_0^{T}\!\!\!\!\! \int_\Omega e^{5s\widehat{\beta}}\widehat{\gamma}^{-6} |\nabla \varphi|^2dxdt +\int_0^{T}\!\!\!\!\! \int_\Omega e^{5s\widehat{\beta}}\widehat{\gamma}^{-6} |\varphi-\bigl(\varphi\bigl)_\Omega |^2dxdt \nonumber \\
& + \int_0^{T}\!\!\!\!\! \int_\Omega e^{5s\widehat{\beta}}\widehat{\gamma}^{-4} | \xi |^2dxdt   + \int_0^{T}\!\!\!\!\! \int_\Omega e^{5s\widehat{\beta}}\widehat{\gamma}^{-6} | \nabla \xi |^2dxdt   \nonumber \\
&+\int_0^{T}\!\!\!\!\! \int_\Omega e^{5s\widehat{\beta}}\widehat{\gamma}^5|v|^2 dxdt + || \varphi (0)- \bigl(\varphi\bigl)_\Omega(0)||^2_{L^2(\Omega)}+ ||\xi(0)||^2_{L^2(\Omega)}+ ||v(0)||^2_{L^2(\Omega)} \nonumber \\
& \leq C \bigl( \int \! \! \! \int_{\omega_1 \times (0,T)} e^{2s\beta^*+3s\widehat{\beta}} \widehat{\gamma}^{61}|\chi_1|^2|\xi|^2 dxdt  +(N-2)\int \! \! \! \int_{\omega_2 \times (0,T) } e^{2s\beta^*+3s\widehat{\beta}}(\gamma^*)^9|\chi_2|^2| v_1|^2 dxdt\nonumber \\
&+  \int \! \! \! \int_Q  e^{3s\widehat{\beta}}\widehat{\gamma}^{-9}|f_1|^2 dxdt  + \int \! \! \! \int_{Q}\widehat{\gamma}^{24}  e^{2s\beta^*+3s\widehat{\beta}} |f_2|^2dxdt  +\int \! \! \! \int_{Q}e^{3 s\widehat{\beta}} (|F_3|^2 +|\nabla F_3|^2 )dxdt\bigl),
\end{align}

where
$$
 \bigl(\varphi\bigl)_\Omega(t) = \frac{1}{|\Omega|}\int_\Omega \varphi(x,t)dx.
$$

\end{proposition}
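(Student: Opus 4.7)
The plan is to deduce Proposition \ref{CarlemanT=t} from Theorem \ref{CarlemanP} by handling the two time intervals $[T/2,T]$ and $[0,T/2]$ separately. On $[T/2,T]$ we have $\tilde\ell(t)\ge \ell(t)$ with equality on $[3T/4,T]$, and both $\tilde\ell$ and $\ell$ are bounded away from $0$ on $[T/2,3T/4]$; consequently every weight appearing on the LHS of \eqref{ME0} (with $\beta,\gamma,\widehat\beta,\widehat\gamma$) is dominated pointwise on $[T/2,T]$ by the corresponding weight on the LHS of \eqref{Carlemanadjointsystem0} up to a multiplicative constant depending on $T,s,\lambda$, while each weight on the RHS of \eqref{Carlemanadjointsystem0} is dominated by its counterpart on the RHS of \eqref{ME0}. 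So the contribution of $[T/2,T]$ to the LHS of \eqref{ME0} is immediately bounded by the RHS of \eqref{ME0} via Theorem \ref{CarlemanP}, and the powers of $s$ in \eqref{Carlemanadjointsystem0} get absorbed into the constant $C=C(T,s,\lambda)$.

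For $[0,T/2]$ and for the terms $\|\varphi(0)-(\varphi)_\Omega(0)\|^2$, $\|\xi(0)\|^2$, $\|v(0)\|^2$, on which $\widehat\beta,\widehat\gamma$ are constant in $t$, I would run a classical backward-in-time energy argument with a time cut-off. Pick $\chi\in C^\infty([0,T])$ with $\chi\equiv 1$ on $[0,T/2]$ and $\chi\equiv 0$ on $[3T/4,T]$, so $\chi'$ is supported in $[T/2,3T/4]$. Setting $\widetilde\varphi=\varphi-(\varphi)_\Omega$, direct computation from \eqref{adjoint-1} shows that $\widetilde\varphi$ is of zero spatial mean and satisfies
\begin{equation*}
-\widetilde\varphi_t-\Delta\widetilde\varphi \;=\; -M_0e^{-Mt}(\xi-(\xi)_\Omega)+(v_3-(v_3)_\Omega)+(f_1-(f_1)_\Omega)
\end{equation*}
with $\partial_\nu\widetilde\varphi=0$ on $\Sigma$. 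Testing the equations for $\chi\widetilde\varphi$, $\chi\xi$ and $\chi v$ against themselves and integrating over $Q$ (using $\chi(T)=0$, $\chi(0)=1$, Poincar\'e's inequality on zero-mean functions, and the fact that the pressure disappears on $\chi v$ since $v$ is divergence-free) produces an identity of the form
\begin{align*}
\tfrac12\bigl(\|\widetilde\varphi(0)\|^2+\|\xi(0)\|^2+\|v(0)\|^2\bigr)
&+\int_0^T\chi^2\bigl(\|\nabla\widetilde\varphi\|^2+\|\nabla\xi\|^2+\|\nabla v\|^2\bigr)dt \\
&\le\; C\!\int_{T/2}^{3T/4}\!\!\bigl(\|\widetilde\varphi\|^2+\|\xi\|^2+\|v\|^2\bigr)dt
+C\!\int_0^T\!\!\bigl(\|f_1\|^2+\|f_2\|^2+\|F_3\|^2\bigr)dt.
\end{align*}
The coupling term $-M\int\Delta\widetilde\varphi\cdot\xi$ arising in the $\xi$-equation is integrated by parts (the Neumann condition kills the boundary term) and then absorbed in $\int\chi^2\|\nabla\widetilde\varphi\|^2+\int\chi^2\|\nabla\xi\|^2$ via Young's inequality; an analogous absorption handles the $\xi$ and $v_3$ forcings in the $\widetilde\varphi$-equation. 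Because $\widehat\beta,\widehat\gamma$ are constant on $[0,T/2]$, multiplying this energy estimate by $e^{5s\widehat\beta(0)}$ (times the appropriate powers of $\widehat\gamma(0)$) and using Poincar\'e for $\widetilde\varphi$ recovers every integral on the LHS of \eqref{ME0} restricted to $[0,T/2]$ as well as the three initial norms.

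It remains to absorb the right-hand side of the energy estimate. The time-localized integrals $\int_{T/2}^{3T/4}(\|\widetilde\varphi\|^2+\|\xi\|^2+\|v\|^2)\,dt$ are controlled by the LHS of \eqref{Carlemanadjointsystem0}: on this interval the weights $e^{5s\widehat\alpha}\widehat\phi^5$, $e^{2s\alpha+3s\widehat\alpha}\widehat\phi^{-6}$, and $e^{2s\alpha+3s\widehat\alpha}\phi^5\widehat\phi^{-9}$ (the last coming from $I_2(s,e^{\frac32 s\widehat\alpha}\widehat\phi^{-9/2}\xi)$) are all bounded below by positive constants depending on $T,s,\lambda$, so the $v$-term, the $\nabla\varphi$-term (hence $\widetilde\varphi$ via Poincar\'e) and the $\xi$-term on the LHS of \eqref{Carlemanadjointsystem0} yield the desired control. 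The pure forcing integrals $\int_0^T(\|f_1\|^2+\|f_2\|^2+\|F_3\|^2)\,dt$ are majorized by the weighted forcing integrals on the RHS of \eqref{ME0} (the weights $e^{3s\widehat\beta}\widehat\gamma^{-9}$, $e^{2s\beta^*+3s\widehat\beta}\widehat\gamma^{24}$, $e^{3s\widehat\beta}$ are all bounded below uniformly on $[0,T]$ by constants depending on $T,s,\lambda$). Combining the contributions of $[T/2,T]$ and $[0,T/2]$ delivers \eqref{ME0}. The main subtlety is bookkeeping the powers of $\widehat\gamma$ versus $\widehat\phi$ arising from $I_2(s,e^{\frac32 s\widehat\alpha}\widehat\phi^{-9/2}\xi)$ so that, after restricting to $[T/2,3T/4]$, one cleanly recovers an $L^2$-bound on $\xi$ rather than a weighted one; everything else is routine.
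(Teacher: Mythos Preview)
Your approach is correct and is exactly what the paper indicates: the authors omit the proof, saying only that it ``combines energy estimates and the Carleman inequality \eqref{Carlemanadjointsystem0}'', and your split into $[T/2,T]$ (handled by Theorem~\ref{CarlemanP} with weight comparison) and $[0,T/2]$ (handled by a cut-off backward energy estimate, with the $[T/2,3T/4]$ residuals fed back into the Carleman LHS) is the standard way to carry this out. Two small repairs are worth flagging: the bare Young absorption of the cross term $M\int\chi^2\nabla\varphi\cdot\nabla\xi$ into the two dissipation integrals fails when $M$ is not small, so a Gronwall step (equivalently, test the $\widetilde\varphi$-equation against $-\chi^2\Delta\widetilde\varphi$ so that $\|\Delta\varphi\|^2$ enters the dissipation and the $-M\Delta\varphi$ coupling becomes lower order) is needed; and the forcing weights on the RHS of \eqref{ME0} vanish at $t=T$, so they are bounded below only on $\operatorname{supp}\chi\subset[0,3T/4]$, not on all of $[0,T]$---but that is all your argument actually uses.
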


\begin{proof}

The proof of Proposition \ref{CarlemanT=t} is standard. It combines energy estimates and the Carleman inequality \eqref{Carlemanadjointsystem0}. For simplicity, we omit the proof.
\end{proof}

Now we proceed to the definition of the spaces where \eqref{system-1-L}-\eqref{system-1-L-2} will be solved. The main space will be:

\begin{align*}
E = \biggl \{&  (n,c,u,p, g_1, (N-2)g_2) \in E_0: \\
 & e^{-5/2s\widehat{\beta}}\widehat{\gamma}^{3} \mathcal{L}_1(n,c,u) \in L^2(Q),  e^{-5/2s\widehat{\beta}}\widehat{\gamma}^{2}  \biggl( \mathcal{L}_2(n,c,u) -g_1\chi_{1} \biggl ) \in  L^2(0,T; H^1(\Omega)),    \\
&  e^{-5/2s\widehat{\beta}}\widehat{\gamma}^{-5/2} \biggl(\mathcal{L}_3(n,c,u)+ \nabla p - e_{N-2}g_2\chi_{2}\biggl) \in \bold{L}^2(Q), \\
& \int_\Omega \mathcal{L}_1(n,c,u) dx = 0  \ \text{and} \ \frac{\partial n}{\partial \nu} = \frac{\partial c}{\partial \nu}= u =0  \ \text{on} \ \Sigma \biggl \},
\end{align*}
where
\begin{align*}
E_0 = \biggl\{ & (n,c,u,p, g_1,(N-2)g_2):  ||e^{-3/2s\widehat{\beta}}  \widehat{\gamma}^{9/2}n||_{L^2(Q)} + || e^{-s\beta^*-3/2s\widehat{\beta}}\widehat{\gamma}^{-12} c||_{L^2(Q)}  \nonumber \\
&+ || \chi_{1} e^{-s\beta^* - 3/2s\widehat{\beta}}\widehat{\gamma}^{-61/2}g_1||_{L^2(Q)} +(N-2) || \chi_{2} e^{-s\beta^* - 3/2s\widehat{\beta}}(\gamma^*)^{-9/2}g_2||_{L^2(Q)} \\
& + || e^{-3/2s\widehat{\beta}} u||_{L^2(0,T; \bold{H}^{-1}(\Omega))}  < \infty, \\
& e^{-5/4s\widehat{\beta}} \widehat{\gamma}^{13/4}n \in L^{2}(0,T; H^2(\Omega))\cap  L^{\infty}(0,T; H^1(\Omega)), \\
 &  e^{-5/4s\widehat{\beta}}\widehat{\gamma}^{-1/4} \nabla c \in L^2(0,T; \bold{H}^2(\Omega)), \\
 &e^{-3/2s\widehat{\beta}} \widehat{\gamma}^{-2-2/11} u \in L^2(0,T; \bold{H}^2(\Omega))\cap L^{\infty}(0,T; \bold{V})  \biggl\}.
\end{align*}
Notice that $E$ is a Banach space for the norm:
\begin{align}
||(n,c,u,&p, g_1,(N-2)g_2)||_E \nonumber \\
=&  ||e^{-3/2s\widehat{\beta}}  \widehat{\gamma}^{9/2}n||^2_{L^2(Q)} + || e^{-s\beta^*-3/2s\widehat{\beta}}\widehat{\gamma}^{-12} c||^2_{L^2(Q)}  \nonumber \\
&+ || \chi_{1} e^{-s\beta^* - 3/2s\widehat{\beta}}\widehat{\gamma}^{-61/2}g_1||^2_{L^2(Q)} +(N-2) || \chi_{2} e^{-s\beta^* - 3/2s\widehat{\beta}}(\gamma^*)^{-9/2}g_2||^2_{L^2(Q)} \nonumber \\
& + || e^{-3/2s\widehat{\beta}} u||^2_{L^2(0,T; \bold{H}^{-1}(\Omega))}  \nonumber \\
& +  ||e^{-5/2s\widehat{\beta}}\widehat{\gamma}^{3} \mathcal{L}_1(n,c,u)||^2_{L^2(Q)} + ||e^{-5/2s\widehat{\beta}}\widehat{\gamma}^{2}  \biggl(  \mathcal{L}_2(n,c,u) -g_1\chi_{1} \biggl )||^2_{L^2(0,T; H^1(\Omega))} \nonumber \\
&+ || e^{-5/2s\widehat{\beta}}\widehat{\gamma}^{-5/2} \biggl( \mathcal{L}_3(n,c,u)+ \nabla p - e_{N-2}g_2\chi_{2}\biggl)||^2_{\bold{L}^2(Q)} \nonumber \\
&+  ||e^{-5/4s\widehat{\beta}} \widehat{\gamma}^{13/4}n||^2_{L^{2}(0,T; H^2(\Omega))}+ ||e^{-5/4s\widehat{\beta}} \widehat{\gamma}^{13/4}n||^2_{L^{\infty}(0,T; H^1(\Omega))} \nonumber \\
& +  ||e^{-5/4s\widehat{\beta}}\widehat{\gamma}^{-1/4} \nabla c||^2_{L^2(0,T; \bold{H}^2(\Omega))} \nonumber \\
& +||e^{-3/2s\widehat{\beta}} \widehat{\gamma}^{-2-1/11} u||^2_{L^2(0,T; \bold{H}^2(\Omega))}+||e^{-3/2s\widehat{\beta}} \widehat{\gamma}^{-2-2/11} u||^2_{L^{\infty}(0,T; \bold{V})}.
\end{align}

\begin{remark}
For every  $(n,c,u,p, g_1,(N-2)g_2)\in E_0$, we have that $\nabla \cdot (n \nabla c) \in L^2(e^{-5s\widehat{\beta}}\widehat{\gamma}^{6}; Q)$. In fact,
\begin{align*}
 &\int \! \! \! \int_Q e^{-5s\widehat{\beta}}\widehat{\gamma}^{6}|\nabla \cdot (n \nabla c)|^2dxdt  \leq   \int \! \! \! \int_Q e^{-5s\widehat{\beta}}\widehat{\gamma}^{6}( | \nabla n|^2 |\nabla c|^2 + |n|^2 |\Delta c|^2)dxdt \\
&  \leq   \int \! \! \! \int_Q  (|e^{-5/4s\widehat{\beta}} \widehat{\gamma}^{13/4} \nabla n|^2 |e^{-5/4s\widehat{\beta}}\widehat{\gamma}^{-1/4}  \nabla c|^2 + |e^{-5/4s\widehat{\beta}} \widehat{\gamma}^{13/4}n|^2 |e^{-5/4s\widehat{\beta}}\widehat{\gamma}^{-1/4}  \Delta c|^2)dxdt <\infty.
\end{align*}
\end{remark}

\begin{remark}
If $(n,c,u,p, g_1,(N-2)g_2) \in E$,  then $n(T)=c(T) = u(T)=0$, so that $(n,c,u,p, g_1,(N-2)g_2)$ solve a null controllability problem for system \eqref{system-1-L} with an appropriate right-hand side $(h_1,h_2, H_3)$.
\end{remark}

We have the following result:
\begin{proposition}\label{UnifResult} Assume that:
\begin{align}
(n_0,c_0, u_0)\in H^1(\Omega)\times H^2(\Omega)\times \bold{V}, \ \int_\Omega n_0dx=0, \ \frac{\partial c_0}{\partial \nu} =0 \ \text{on} \ \partial \Omega,
\end{align}
$$
e^{-5/2s\widehat{\beta}}\widehat{\gamma}^{3} h_1 \in L^2(0,T; L^2_0(\Omega)),e^{-5/2s\widehat{\beta}}\widehat{\gamma}^{2}h_2 \in L^2(0,T; H^1(\Omega)),
$$
and
$$
e^{-5/2s\widehat{\beta}}\widehat{\gamma}^{-5/2}H_3 \in \bold{L}^2(Q).
$$
Then, there exist  $(g_1\chi_1, (N-2)g_2\chi_2) \in L^2( 0,T; H^1(\Omega))\times L^2(Q)$,  such that, if $(n,c,u,p)$ is the associated solution to \eqref{system-1-L}, one has $ (n,c,u,p, g_1\chi_1,(N-2)g_2\chi_2) \in E$. In particular,  \eqref{system-1-L-2} holds.
\end{proposition}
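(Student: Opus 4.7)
The plan is to follow the Fursikov--Imanuvilov duality method, using the Carleman estimate of Proposition \ref{CarlemanT=t} to reformulate the null controllability problem as a variational problem for the adjoint variables. Let
$$
P_0 = \bigl\{ (\varphi,\xi,v,\pi) \in C^2(\overline Q)^{2+N+1} : \;\nabla\cdot v=0,\;\tfrac{\partial\varphi}{\partial\nu}=\tfrac{\partial\xi}{\partial\nu}=0,\; v|_\Sigma=0,\; \textstyle\int_\Omega \varphi\,dx=0\bigr\},
$$
and define on $P_0$ the bilinear form
\begin{align*}
a(\widehat U,U) = & \iint_{\omega_1\times(0,T)} e^{2s\beta^*+3s\widehat\beta}\widehat\gamma^{61}\chi_1^2\,\widehat\xi\,\xi\,dxdt
+ (N-2)\!\!\iint_{\omega_2\times(0,T)} e^{2s\beta^*+3s\widehat\beta}(\gamma^*)^9\chi_2^2\,\widehat v_1 v_1\,dxdt \\
& + \iint_Q e^{3s\widehat\beta}\widehat\gamma^{-9}\,\widehat L_1^*\,L_1^*\,dxdt
+ \iint_Q e^{2s\beta^*+3s\widehat\beta}\widehat\gamma^{24}\,\widehat L_2^*\,L_2^*\,dxdt
+ \iint_Q e^{3s\widehat\beta}\,\widehat L_3^*\cdot L_3^*\,dxdt,
\end{align*}
where $(L_1^*,L_2^*,L_3^*)$ denotes the formal adjoint operator acting on $(\varphi,\xi,v,\pi)$ (the left-hand side of \eqref{adjoint-1}). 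Then define the continuous linear form
$$
\langle \mathcal G, U\rangle = \iint_Q \bigl(h_1\varphi + h_2\xi + H_3\cdot v\bigr)\,dxdt + \int_\Omega n_0\varphi(0)\,dx + \int_\Omega c_0\xi(0)\,dx + \int_\Omega u_0\cdot v(0)\,dx.
$$

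Proposition \ref{CarlemanT=t} guarantees that $\sqrt{a(\cdot,\cdot)}$ is a norm on $P_0$: its left-hand side controls global weighted norms of $\varphi-(\varphi)_\Omega$, $\nabla\varphi$, $\xi$, $v$ and the traces $\varphi(0)-(\varphi)_\Omega(0)$, $\xi(0)$, $v(0)$ in exactly the norms needed to estimate $\langle\mathcal G, U\rangle$ by Cauchy--Schwarz against the weighted hypotheses on $(h_1,h_2,H_3,n_0,c_0,u_0)$; note that the mean-value condition $\int_\Omega h_1\,dx=0$ combined with $\int_\Omega n_0=0$ ensures the duality pairing with $\varphi$ only involves $\varphi-(\varphi)_\Omega$. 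Let $P$ be the completion of $P_0$ under $a$. By Lax--Milgram, there exists a unique $\widehat U=(\widehat\varphi,\widehat\xi,\widehat v,\widehat\pi)\in P$ with $a(\widehat U,U)=\langle\mathcal G,U\rangle$ for every $U\in P_0$.

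Next, I would recover the state-control pair by setting
$$
g_1\chi_1 = \chi_1 e^{2s\beta^*+3s\widehat\beta}\widehat\gamma^{61}\,\widehat\xi, \qquad g_2\chi_2 = \chi_2 e^{2s\beta^*+3s\widehat\beta}(\gamma^*)^9\,\widehat v_1,
$$
and
$$
n = e^{3s\widehat\beta}\widehat\gamma^{-9}\,\widehat L_1^*(\widehat U), \quad c = e^{2s\beta^*+3s\widehat\beta}\widehat\gamma^{24}\,\widehat L_2^*(\widehat U), \quad (u,p) = \bigl(e^{3s\widehat\beta}\widehat L_3^*(\widehat U),\widehat\pi\bigr).
$$
The variational identity $a(\widehat U,U)=\langle\mathcal G,U\rangle$ is precisely the weak formulation of the controlled system \eqref{system-1-L} with these data and with $n(T)=c(T)=0$, $u(T)=0$. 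The membership of all six quantities in the weighted $L^2$ spaces defining $E_0$ is then immediate from $a(\widehat U,\widehat U)=\langle\mathcal G,\widehat U\rangle<\infty$.

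The main obstacle is the last step, upgrading $(n,c,u,p,g_1,g_2)$ from $E_0$ to $E$, i.e.\ obtaining the weighted higher-order bounds
$e^{-5/4 s\widehat\beta}\widehat\gamma^{13/4} n \in L^2(H^2)\cap L^\infty(H^1)$,
$e^{-5/4 s\widehat\beta}\widehat\gamma^{-1/4}\nabla c \in L^2(H^2)$,
and $e^{-3/2 s\widehat\beta}\widehat\gamma^{-2-2/11} u \in L^2(H^2)\cap L^\infty(V)$.
For this I would perform a bootstrap: multiply each equation by the appropriate weight, observing that functions like $e^{-3/2s\widehat\beta}\widehat\gamma^{-k}$ vanish to infinite order at $t=T$ and are bounded on $[0,T/2]$, so that $(e^{-3/2s\widehat\beta}\widehat\gamma^{-k}n,\dots)$ solves a heat/Stokes system with zero initial and terminal data and a right-hand side controlled by the $E_0$-norm (using the pointwise bounds $|(\widehat\gamma^{k})_t|\leq C\widehat\gamma^{k+1/11}$ and $|\nabla\widehat\gamma^k|\leq C\widehat\gamma^k$ already used throughout Section 2). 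Standard maximal regularity for the heat equation with Neumann boundary data (for $n$ and $c$) and Lemma \ref{regStokes10} for the Stokes system (for $u$) then provide the required $L^2(H^2)\cap L^\infty(H^1)$ bounds. The coupling terms $M\Delta c$ in the $n$-equation and $M_0 e^{-Mt} n$ in the $c$-equation can be absorbed thanks to the gain of one derivative in the weights $\widehat\gamma^{13/4}$ versus $\widehat\gamma^{-1/4}$, and analogously for the Stokes forcing $ne_N$. Balancing the powers of $\widehat\gamma$ that appear through differentiation of the weights against those gained by regularity is where the delicate bookkeeping lies, but no new Carleman analysis is needed.
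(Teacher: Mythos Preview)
Your overall strategy---the Fursikov--Imanuvilov duality via Lax--Milgram followed by a weighted regularity bootstrap---is exactly the approach of the paper, and your account of the bootstrap (Cases $\rho=e^{-5/4s\widehat\beta}\widehat\gamma^{13/4}$, etc.) is correct in spirit. However, there is a genuine gap in the Stokes part of your bilinear form.

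You take
$$
\iint_Q e^{3s\widehat\beta}\,\widehat L_3^*\cdot L_3^*\,dxdt,
$$
i.e.\ only the weighted $L^2$ inner product of the Stokes residuals. But the right-hand side of the Carleman estimate \eqref{ME0} in Proposition~\ref{CarlemanT=t} contains
$$
\iint_Q e^{3s\widehat\beta}\bigl(|F_3|^2+|\nabla F_3|^2\bigr)\,dxdt,
$$
so coercivity of $a$ in the norms appearing on the left of \eqref{ME0} requires the bilinear form to contain the $H^1$ (indeed $L^2(0,T;\bold V)$) inner product of the Stokes residuals, not merely $L^2$. With your $a$, the claim ``Proposition~\ref{CarlemanT=t} guarantees that $\sqrt{a(\cdot,\cdot)}$ is a norm'' fails: your $a$ is dominated by the paper's $a$ and the Carleman inequality does not bound the observation terms by your smaller quantity. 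The paper therefore uses
$$
\iint_Q e^{3s\widehat\beta}\Bigl( [\mathcal L_3^*+\nabla\widehat q]\cdot[\mathcal L_3^*+\nabla q]
+\nabla[\mathcal L_3^*+\nabla\widehat q]:\nabla[\mathcal L_3^*+\nabla q]\Bigr)\,dxdt,
$$
and correspondingly restricts $P_0$ by the conditions $\Delta q=0$ and $(\mathcal L_3^*+\nabla q)|_\Sigma=0$, so that $\mathcal L_3^*+\nabla q\in L^2(0,T;\bold V)$. This change forces a different recovery of $u$: one sets
$$
\widehat u \;=\; e^{3s\widehat\beta}\bigl(\mathcal L_3^*(\widehat U)+\nabla\widehat q - \Delta(\mathcal L_3^*(\widehat U)+\nabla\widehat q)\bigr),
$$
which lies only in a weighted $L^2(0,T;\bold H^{-1}(\Omega))$---precisely the regularity demanded in the definition of $E_0$---and one then needs a short de~Rham argument (Lemma~\ref{MN1} in the paper) to identify $(\widehat n,\widehat c,\widehat u)$ with the weak solution of \eqref{system-1-L}. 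Your simpler recipe $(u,p)=(e^{3s\widehat\beta}\widehat L_3^*,\widehat\pi)$ does not match the variational identity once the bilinear form is corrected. A secondary issue: in $P_0$ the mean-zero condition should be imposed on $\varphi(\cdot,T)$, not on $\varphi(\cdot,t)$ for all $t$, since the latter is not preserved by the adjoint evolution.
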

\begin{proof}
Following the  arguments in \cite{F-Im, Im-1},  we introduce the space
%
%
 \begin{align*}
P_0 = \biggl\{ (z,w, y, q) & \in \bold{C}^{3}(\overline{Q});  \ \frac{\partial z}{\partial \nu} = \frac{\partial w}{\partial \nu}  =y= 0  \ \mbox{on} \ \Sigma , \int_\Omega z(x,T)dx = 0, \nonumber \\
& \nabla \cdot y =0, \ \int_{\Omega} q(x,t)dx =0, \Delta q=0,  \bigl(\mathcal{L}^*_3(z, w, y) + \nabla q\bigl)\bigl|_{\Sigma}=0  \biggl \}
 \end{align*}
and consider the bilinear form on $P_0$:
\begin{align}
a&\bigl((\widehat{z},\widehat{w}, \widehat{y}, \widehat{q}), (z,w, y, q) \bigl) \nonumber \\
&:=  \int \! \! \! \int_Q  e^{3s\widehat{\beta}}\widehat{\gamma}^{-9} \mathcal{L}^*_1(\widehat{z}, \widehat{w}, \widehat{y})\mathcal{L}^*_1(z, w, y) dxdt  + \int \! \! \! \int_{Q}\widehat{\gamma}^{24}  e^{2s\beta^*+3s\widehat{\beta}} \mathcal{L}^*_2(\widehat{z}, \widehat{w}, \widehat{y})\mathcal{L}^*_2(z, w, y)dxdt \nonumber \\
& +\int \! \! \! \int_{Q}e^{3 s\widehat{\beta}} \biggl(\bigl[ \mathcal{L}^*_3(\widehat{z}, \widehat{w}, \widehat{y}) +\nabla \widehat{q}\bigl] \cdot \bigl[ \mathcal{L}^*_3(z, w, y) +\nabla q \bigl] +\nabla \bigl[ \mathcal{L}^*_3(\widehat{z}, \widehat{w}, \widehat{y}) +\nabla \widehat{q}\bigl] :\nabla \bigl[ \mathcal{L}^*_3(z, w, y) +\nabla q \bigl]   \biggl)dxdt \nonumber \\
& +\int \! \! \! \int_{\omega_1 \times (0,T)} e^{2s\beta^*+3s\widehat{\beta}} \widehat{\gamma}^{61}|\chi_1|^2\widehat{w}w dxdt  +(N-2)\int \! \! \! \int_{\omega_2 \times (0,T) } e^{2s\beta^*+3s\widehat{\beta}}(\gamma^*)^9|\chi_2|^2\widehat{y}_1y_1 dxdt.
\end{align}
Here, we have denoted  $\mathcal{L}^*$ is the adjoint of  $\mathcal{L}$, i.e.,
\begin{align*}
\mathcal{L}^*(z, w, y) &= \bigl( -z_{t}    - \Delta z+M_0e^{-Mt}w -  ye_N, -w_t   - \Delta w +Mw + M \Delta z, -y_t - \Delta y \bigl) \nonumber \\
&:= (\mathcal{L}^*_1, \mathcal{L}^*_2, \mathcal{L}^*_3)(z, w, y).
\end{align*}

Thanks to \eqref{ME0}, we have that $a: P_0\times P_0 \rightarrow \mathbb{R}$ is a symmetric, definite positive bilinear form. We denote by $P$ the completion of $P_0$ with respect to the norm associated to $a(.,.)$ (which we denote by $||.||_P$). This is a Hilbert space and $a(.,.)$ is a continuous and coercive bilinear form on $P$.

Let us now consider the linear form
\begin{align*}
\bigl<G, &(z,w, y, q)\bigl> \nonumber \\
&=  \int \! \! \! \int_Q h_1z dxdt + \int \! \! \! \int_Q h_2wdxdt + \int_0^T H_3 \cdot ydxdt + \int_{\Omega}\bigl(n_0z(0) + c_0w(0) + u_0\cdot y(0) \bigl)dx.
\end{align*}
It is immediate to see that
\begin{align*}
| \bigl<G, (z,w, y, q)\bigl>| =  & \|e^{-5/2s\widehat{\beta}}\widehat{\gamma}^{3} h_1\|_{L^2(0,T; L^2_0(\Omega))} \|e^{5/2s\widehat{\beta}}\widehat{\gamma}^{-3} \bigl(z-\bigl(z\bigl)_{\Omega}\bigl)\|_{L^2(Q)}\nonumber \\
&   +\|e^{-5/2s\widehat{\beta}}\widehat{\gamma}^{2}h_2\|_{L^2(Q)}\|e^{5/2s\widehat{\beta}}\widehat{\gamma}^{-2}w\|_{L^2(Q)} \nonumber \\
&  +\|e^{-5/2s\widehat{\beta}}\widehat{\gamma}^{-5/2}H_3\|_{\bold{L}^2(Q)}\|e^{5/2s\widehat{\beta}}\widehat{\gamma}^{5/2}y\|_{\bold{L}^2(Q)}\nonumber \\
&  + \|(n_0, c_0, u_0)\|_{\bold{L}^2(\Omega)}\|(z(0)-\bigl(z\bigl)_{\Omega}(0), w(0), y(0)) \|_{\bold{L}^2(\Omega)}.
\end{align*}
In particular, we have  that (see \eqref{ME0})
\begin{align*}
| \bigl<G, (z,w, y, q)\bigl>| \leq & \ C \biggl( \|e^{-5/2s\widehat{\beta}}\widehat{\gamma}^{3} h_1\|_{L^2(0,T; L^2_0(\Omega))} +\|e^{-5/2s\widehat{\beta}}\widehat{\gamma}^{2}h_2\|_{L^2(Q)} \nonumber \\
& +\|e^{-5/2s\widehat{\beta}}\widehat{\gamma}^{-5/2}H_3\|_{\bold{L}^2(Q)} + \|(n_0, c_0, u_0)\|_{\bold{L}^2(\Omega)}\biggl)\| (z,w, y, q)\|_P.
\end{align*}
Therefore, $G$ is a linear form on $P$ and by Lax-Milgram's lemma, there exists a unique $(\widehat{z},\widehat{w}, \widehat{y}, \widehat{q}) \in P$ such that
\be\label{LAxM}
a\bigl((\widehat{z},\widehat{w}, \widehat{y}, \widehat{q}), (z,w, y, q) \bigl)= \bigl<G, (z,w, y, q)\bigl>,
\ee
for every $(z,w, y, q)  \in P$. We set
\begin{align}\label{extremalsol}
& (\widehat{n},\widehat{c},\widehat{u})  \nonumber \\
&=(e^{3s\widehat{\beta}}  \widehat{\gamma}^{-9}\mathcal{L}^*_1(\widehat{z},\widehat{w}, \widehat{y}), e^{2s\beta^*+3s\widehat{\beta}}\widehat{\gamma}^{24}\mathcal{L}^*_2(\widehat{z},\widehat{w}, \widehat{y}), e^{3s\widehat{\beta}} (\mathcal{L}^*_3(\widehat{z},\widehat{w}, \widehat{y}) +\nabla \widehat{q}-\Delta(\mathcal{L}^*_3(\widehat{z},\widehat{w}, \widehat{y}) +\nabla \widehat{q})\bigl)
\end{align}
and
\begin{align}\label{DefcontrolX}
(\widehat{g}_1,(N-2)\widehat{g}_2)= -( e^{2s\beta^* +3s\widehat{\beta}}\widehat{\gamma}^{61} \widehat{w}\chi_1, (N-2)e^{2s\beta^* + 3s\widehat{\beta}}(\gamma^*)^{9}y_1\chi_2).
\end{align}

Let us show that  the quantity
\begin{align*}
&||e^{-3/2s\widehat{\beta}}  \widehat{\gamma}^{9/2}\widehat{n}||^2_{L^2(Q)} + || e^{-s\beta^*-3/2s\widehat{\beta}}\widehat{\gamma}^{-12}\widehat{c}||^2_{L^2(Q)}+ || e^{-3/2s\widehat{\beta}} \widehat{u}||^2_{L^2(0,T; \bold{H}^{-1}(\Omega))}  \nonumber \\
&+ || \chi_{1} e^{-s\beta^* - 3/2s\widehat{\beta}}\widehat{\gamma}^{-61/2}\widehat{g}_1||^2_{L^2(Q)} +(N-2) || \chi_{2} e^{-s\beta^* - 3/2s\widehat{\beta}}(\gamma^*)^{-9/2}\widehat{g}_2||^2_{L^2(Q)}
\end{align*}
is finite.

We begin noticing that
\begin{align*}
\int_0^T e^{-3s\widehat{\beta}} ||\widehat{u}||^2_{\bold{H}^{-1}(\Omega)}dt  &=  \int_0^T e^{-3s\widehat{\beta}} \sup_{||\zeta||_{\bold{H}^{1}_0(\Omega)}=1}<\widehat{u}, \zeta>^2_{\bold{H}^{-1}(\Omega), \bold{H}^{1}_0(\Omega)}dt \nonumber \\
&= \int_0^T e^{3s\widehat{\beta}} \sup_{||\zeta||_{\bold{H}^{1}_0(\Omega)}=1}<  \mathcal{L}^*_3(\widehat{z},\widehat{w}, \widehat{y}) +\nabla \widehat{q}-\Delta(\mathcal{L}^*_3(\widehat{z},\widehat{w}, \widehat{y}) +\nabla \widehat{q}), \zeta>^2_{\bold{H}^{-1}(\Omega), \bold{H}^{1}_0(\Omega)}dt \nonumber \\
& = \int_0^T \sup_{||\zeta||_{\bold{H}^{1}_0(\Omega)}=1}\bigl(e^{3/2s\widehat{\beta}} (\mathcal{L}^*_3(\widehat{z},\widehat{w}, \widehat{y}) +\nabla \widehat{q}), \zeta\bigl)^2_{\bold{L}^{2}(\Omega)}+\bigl(e^{3/2s\widehat{\beta}} \nabla (\mathcal{L}^*_3(\widehat{z},\widehat{w}, \widehat{y}) +\nabla \widehat{q}), \nabla \zeta\bigl)^2_{\bold{L}^{2}(\Omega)} dt \nonumber \\
&\leq   \int \! \! \! \int_Q  e^{3s\widehat{\beta}}(|\mathcal{L}^*_3(\widehat{z},\widehat{w}, \widehat{y}) +\nabla \widehat{q}|^2+ |\nabla (\mathcal{L}^*_3(\widehat{z},\widehat{w}, \widehat{y}) +\nabla \widehat{q})|^2)dxdt.
\end{align*}
Moreover,  since $\nabla \cdot y =0, \Delta q=0$ and $\bigl(\mathcal{L}^*_3(z, w, y)+ \nabla q\bigl)\bigl|_{\Sigma}=0$, we have that  $e^{3/2s\widehat{\beta}}\bigl(\mathcal{L}^*_3(z, w, y)+ \nabla q\bigl) \in L^2(0,T;\bold{V})$ and the equality is achieved. It is now immediate to see that
\begin{align}
&||e^{-3/2s\widehat{\beta}}  \widehat{\gamma}^{9/2}\widehat{n}||^2_{L^2(Q)} + || e^{-s\beta^*-3/2s\widehat{\beta}}\widehat{\gamma}^{-12}\widehat{c}||^2_{L^2(Q)}+ || e^{-3/2s\widehat{\beta}} \widehat{u}||^2_{L^2(0,T; \bold{H}^{-1}(\Omega))}  \nonumber \\
&+ || \chi_{1} e^{-s\beta^* - 3/2s\widehat{\beta}}\widehat{\gamma}^{-61/2}\widehat{g}_1||^2_{L^2(Q)} +(N-2) || \chi_{2} e^{-s\beta^* - 3/2s\widehat{\beta}}(\gamma^*)^{-9/2}\widehat{g}_2||^2_{L^2(Q)} \nonumber \\
&= a \bigl( (\widehat{n},\widehat{c},\widehat{u}, \widehat{q}),  (\widehat{n},\widehat{c},\widehat{u}, \widehat{q}) \bigl)  <\infty. \label{finitenormaextremsol}
\end{align}

Let us show that,  $(\widehat{n},\widehat{c},\widehat{u})$ is the weak solution of \eqref{system-1-L} with $(g_1,g_2)= ( \widehat{g}_1,  \widehat{g}_2)$.

First, it is not difficult to see that the weak solution   $(\tilde{n}, \tilde{c}, \tilde{u})$ of system \eqref{system-1-L} with $g_1= \widehat{g}_1$ and $g_2= \widehat{g}_2$ satisfies the following identity
\begin{align}\label{deftrans}
 &\int \! \! \! \int_Q  (\tilde{n}, \tilde{c}, \tilde{u})\cdot (f_1,f_2) dxdt +\int_0^T< \tilde{u}, F_3>_{\bold{H}^{-1}(\Omega), \bold{H}^{1}_0(\Omega)} dxdt  \nonumber \\
 &= \int \! \! \! \int_Q h_1\varphi dxdt +\int \! \! \! \int_Q h_2\xi dxdt+\int \! \! \! \int_Q H_3\cdot  vdxdt \nonumber \\
 &+\int \! \! \! \int_Q \widehat{g}_1\chi_1 \xi dxdt +(N-2)\int \! \! \! \int_Q \widehat{g}_2\chi_2 v_1 dxdt  \nonumber \\
&+ (n_0,\varphi(0)) +(c_0,w(0)) +(u_0,v(0)), \forall (f_1,f_2,F_3) \in L^2(Q)^2\times L^2(0,T;\bold{V}),
\end{align}
where $(\varphi, \xi, v, \pi)$ is the solution of
\begin{equation}\label{system-1-L-adjoint}
\left |
\begin{array}{ll}
\mathcal{L}^*(\varphi, \xi, v) + (0,0, \nabla \pi)= (f_1,f_2, F_3) &     \mbox{in}  \  \ Q, \\
\nabla \cdot v =0 &     \mbox{in}  \  \ Q, \\
\frac{\partial \varphi}{\partial \nu} = \frac{\partial \xi }{\partial \nu} = 0; \ v = 0    &    \mbox{on}  \  \   \Sigma, \\
\varphi(x,T) = 0;  \ \xi(x,T) = 0; \ v(x,T) = 0  &    \mbox{in}    \  \  \Omega.
\end{array}
\right.
\end{equation}

Let us now take $(f_1^k,f_2^k, F_3^k) \in C^{\infty}_0(Q) \times C^{\infty}_0(Q) \times C^{\infty}_0(0,T; \mathcal{V})$ converging to $(f_1,f_2, F_3)$ as $k \rightarrow \infty$. Here  $\mathcal{V} = \{ u \in  \bold{C}^{\infty}_0(\Omega), \ \nabla \cdot u=0 \ \mbox{in} \ \Omega \}$. Moreover, let $(\varphi^k, \xi^k, v^k, \pi^k)$ be the solution of
\begin{equation}\label{system-1-L-adjoint-X}
\left |
\begin{array}{ll}
\mathcal{L}^*(\varphi^k, \xi^k, v^k) + (0,0, \nabla \pi^k)= (f_1^k, f_2^k, F_3^k) &     \mbox{in}  \  \ Q, \\
\nabla \cdot v^k =0 &     \mbox{in}  \  \ Q, \\
\frac{\partial \varphi^k}{\partial \nu} = \frac{\partial \xi^k }{\partial \nu} = 0; \ v^k = 0    &    \mbox{on}  \  \   \Sigma, \\
\varphi^k(x,T) = 0;  \ \xi^k(x,T) = 0; \ v^k(x,T) = 0  &    \mbox{in}    \  \  \Omega.
\end{array}
\right.
\end{equation}	
We have that $(\varphi^k, \xi^k, v^k, \pi^k) \in P_0$ and from energy estimates, we have that   $(\varphi^k, \xi^k, v^k)$ converges to $(\varphi, \xi, v, \pi)$ in the space $L^2(Q)\times L^2(Q) \times L^2(0,T;\bold{V})$ (actually it converges in a better space).

From \eqref{LAxM} and the definition of $(\widehat{n},\widehat{c},\widehat{u})$, we have
\begin{align}\label{PX1}
&  \int \! \! \! \int_Q  \widehat{n}f_1^kdxdt  + \int \! \! \! \int_{Q}\widehat{c}f_2^kdxdt +\int_0^T <\widehat{u}, F_3^k>_{\bold{H}^{-1}(\Omega), \bold{H}^{1}_0(\Omega)} dxdt \nonumber \\
& = \int \! \! \! \int_Q h_1 \varphi^k dxdt + \int \! \! \! \int_Q h_2 \xi^k dxdt + \int_0^T H_3 \cdot v^k dxdt + \int_{\Omega}\bigl(n_0\varphi^k(0) + c_0\xi^k(0) + u_0\cdot v^k(0) \bigl)dx \nonumber \\
&+\int \! \! \! \int_{\omega_1 \times (0,T)} \chi_1 \widehat{g}_1\xi^k dxdt  +(N-2)\int \! \! \! \int_{\omega_2 \times (0,T) } \chi_2\widehat{g}_2v_1^k dxdt.
\end{align}
We may pass to the limit in \eqref{PX1} to conclude that $(\widehat{n},\widehat{c},\widehat{u})$ also satisfies \eqref{deftrans} for every $(f_1,f_2, F_3) \in L^2(Q)\times L^2(Q) \times L^2(0,T;\bold{V})$.

The following lemma says that, possibly changing $\widehat{q}$ in \eqref{extremalsol},  $(\widehat{n},\widehat{c},\widehat{u})$ is in fact the weak solution of \eqref{system-1-L}.

\begin{lemma}\label{MN1}
Let $u \in  L^2(0,T;\bold{H}^{-1}(\Omega)$ with $\nabla \cdot u = 0$ and such that
$$
\int_0^T <u,F>_{\bold{H}^{-1}(\Omega), \bold{H}^{1}_0(\Omega)} dt =0
$$
for every $F \in  L^2(0,T;\bold{V})$. Then there exist $q \in L^2(0,T; L^2_0(\Omega))$, with $\Delta q=0$,  such that
$$
u= \nabla q.
$$
\end{lemma}
\begin{proof}
The result follows  from de Rham's theorem.
\end{proof}

From Lemma \ref{MN1}, identities \eqref{deftrans} and   \eqref{PX1},  we conclude that $(\widehat{n},\widehat{c},\widehat{u})$ is in fact the weak solution of \eqref{system-1-L}.



Let us now show that $(\widehat{n},\widehat{c},\widehat{u})$ belongs to $E$. Indeed, it only remains to check that
\begin{align*}
e^{-5/4s\widehat{\beta}} \widehat{\gamma}^{13/4} \widehat{n}  \in L^{2}(0,T; H^2(\Omega)) \cap  L^{\infty}(0,T; H^1(\Omega)),
\end{align*}
\begin{align*}
  e^{-5/4s\widehat{\beta}} \widehat{\gamma}^{-1/4}  \hat{c} \in L^2(0,T; H^3(\Omega))
\end{align*}
and that
\begin{align*}
e^{-3/2s\widehat{\beta}} \widehat{\gamma}^{-2-2/11} \widehat{u} \in L^2(0,T; \bold{H}^2(\Omega))\cap L^{\infty}(0,T; \bold{V}).
\end{align*}
To this end, let us introduce   $(n^*,c^*, u^*) = \rho(t)(\widehat{n},\widehat{c}, \widehat{u})$, which satisfies

\begin{equation}\label{system-1-regular}
\left |
\begin{array}{ll}
n_{t}^*    - \Delta n^*  = - M \Delta c^* + \rho h_1  +\rho_t \widehat{n}  &     \mbox{in}  \  \ Q,  \\
c_t^*   - \Delta c^*  = -Mc^*  -M_0e^{-Mt}n^* + \rho g_1 \chi_{1} + \rho h_2 +\rho_t \widehat{c}   &     \mbox{in}  \  \ Q, \\
u_t^* - \Delta u^*  + \nabla p^* = n^*e_N + \rho g_2\chi_{2}e_{N-2}  +\rho H_3 + \rho_t \widehat{u}&     \mbox{in}  \  \ Q,  \\
\nabla \cdot u^* =0 &     \mbox{in}  \  \ Q, \\
\frac{\partial n^*}{\partial \nu} = \frac{\partial c^*}{\partial \nu} = 0; \ u^* = 0    &    \mbox{on}  \  \   \Sigma, \\
n^*(x,0) = \rho(0)n_0;  \ c^*(x,0) = \rho(0)c_0; \ u^*(x,0) = \rho(0)u_0  &    \mbox{in}    \  \  \Omega,
\end{array}
\right.
\end{equation}

We  consider four cases:

\null

\textit{Case 1}. $\rho = e^{-5/4s\widehat{\beta}} \widehat{\gamma}^{13/4}$.

\null

In this case, we have that
\begin{align}
|\rho_{t}| \leq  Ce^{-5/4s\widehat{\beta}} \widehat{\gamma}^{9/2} \leq Ce^{-3/2s\widehat{\beta}}  \widehat{\gamma}^{9/2}
\end{align}
and
\begin{align}
|\rho_{t}| \leq  Ce^{-s\beta^*-3/2s\widehat{\beta}}\widehat{\gamma}^{-12}.
\end{align}
From \eqref{finitenormaextremsol}, it follows that  $\rho_t \hat{n}$  and $\rho_{t} \hat{c}$ belong to $L^2(Q)$. Therefore, from well-known regularity properties of  parabolic systems (see, for instance, \cite{L-S-U}), we have
\begin{equation}
\begin{cases}
e^{-5/4s\widehat{\beta}} \widehat{\gamma}^{13/4} \hat{n} \in L^{2}(0,T;H^2(\Omega)) \cap  L^\infty(0,T; H^1(\Omega)), \\
e^{-5/4s\widehat{\beta}} \widehat{\gamma}^{13/4} \hat{c} \in L^2(0,T;H^2(\Omega)).
\end{cases}
\end{equation}

\null

\textit{Case 2}. $\rho = e^{-5/4s\widehat{\beta}} \widehat{\gamma}^{-1/4}$.

\null

In this case, a simple calculation gives
\begin{align}
|\rho_{t}|  \leq Ce^{-5/4s\widehat{\beta}} \widehat{\gamma}^{13/4}
\end{align}
 and from  Case $1$, we conclude that  $\rho_t \widehat{c}$ belongs to $L^2(0,T; H^1(\Omega))$.

Using the definition of $\widehat{g}_1$ (see \eqref{DefcontrolX}) and \eqref{ME0}, we can also show that
\begin{align}
 \iint\limits_Q |\nabla (e^{-5/4s\widehat{\beta}} \widehat{\gamma}^{-1/4} \widehat{g}_1)|^2 \leq Ca((\hat{z}, \hat{w}),(\hat{z}, \hat{w})),
\end{align}
for some $C>0$, since $e^{7/2s\widehat{\beta}+4s\beta^*}\widehat{\gamma}^{122-1/2}\leq Ce^{5s\widehat{\beta}}\widehat{\gamma}^{-6}$. Hence it follows that  $e^{-5/4s\widehat{\beta}} \widehat{\gamma}^{-1/4} \widehat{g} \in L^2(0,T; H^1(\Omega))$.

Therefore,  from the regularity theory  for parabolic systems,   we deduce that
\begin{equation}
\begin{cases}
e^{-5/4s\widehat{\beta}} \widehat{\gamma}^{-1/4}  \widehat{n} \in L^{\infty}(0,T;H^1(\Omega)) \cap L^2(0,T;H^2(\Omega)), \\
e^{-5/4s\widehat{\beta}} \widehat{\gamma}^{-1/4}  \widehat{c} \in L^2(0,T;H^3(\Omega)).
\end{cases}
\end{equation}

\null

\textit{Case 3}. $\rho =e^{-3/2s\widehat{\beta}} \widehat{\gamma}^{-1-1/11}$.

\null

In this case, we have
\begin{align}
|\rho_{t}|  \leq Ce^{-3/2s\widehat{\beta}}.
\end{align}
and it follows that
\begin{align*}
e^{-3/2s\widehat{\beta}}  \widehat{\gamma}^{-1-1/11} \widehat{u} \in L^2(0,T; \bold{H}^1(\Omega))\cap L^{\infty}(0,T; \bold{H}).
\end{align*}

\textit{Case 4}. $\rho =e^{-3/2s\widehat{\beta}} \widehat{\gamma}^{-2-2/11}$.

\null

In this case, we have
\begin{align}
|\rho_{t}|  \leq Ce^{-3/2s\widehat{\beta}} \widehat{\gamma}^{-1-1/11} .
\end{align}
and it follows that
\begin{align*}
e^{-3/2s\widehat{\beta}} \widehat{\gamma}^{-2-2/11} \widehat{u} \in L^2(0,T; \bold{H}^2(\Omega))\cap L^{\infty}(0,T; \bold{V}).
\end{align*}

This finishes the proof of Proposition \ref{UnifResult}.

\begin{remark}
For every $a>0$, and every $b, c\in \mathbb{R}$, the function $s^be^{as\widehat{\beta}}\gamma^{c}$ is bounded.
\end{remark}

%
%

\end{proof}

\section{Null controllability to trajectories}\label{sec4}

In this section we give the proof of Theorem \ref{mainresultt} using similar arguments to those employed, for instance, in \cite{Im-1}. We will see that the results obtained in the previous section allow us to locally invert the nonlinear system \eqref{system}. In fact, the regularity deduced for the solution of the linearized system \eqref{system-1} will be sufficient to apply a suitable inverse function theorem (see Theorem \ref{teoremadecontrol}  below).
Thus, let us set $n = M + z$, $c = M_0e^{-Mt}+ w$ and $u= y$   and let us use these equalities in \eqref{system}. We find:
\begin{equation}\label{IFS}
\left |
\begin{array}{ll}
\mathcal{L}(z,w, y) + (0,0, \nabla p)=  -( y\cdot \nabla z +\nabla \cdot (z\nabla w), zw+y\cdot \nabla w, (y\cdot \nabla) y)  + ( 0, g_1 \chi_1, (N-2)g_2\chi_2), \\
\nabla \cdot y =0 &     \mbox{in}  \  \ Q, \\
\frac{\partial z}{\partial \nu} = \frac{\partial w}{\partial \nu} = 0; \ y = 0    &    \mbox{on}  \  \   \Sigma, \\
z(x,0) = n_0-M;  \ w(x,0) = c_0 -M_0; \ y(x,0) = u_0  &    \mbox{in}    \  \  \Omega,
\end{array}
\right.
\end{equation}

This way, we have reduced our problem to a local null controllability result for the solution  $(z,w,y)$ of the nonlinear problem \eqref{IFS}. We will use the following inverse mapping theorem (see \cite{Graves}):


\begin{theorem} \label{teoremadecontrol}
Let $E$ and $G$ be two Banach spaces and let  $ \mathcal{A} : E \rightarrow G $ be a continuous function from $E$ to $G$ defined in $B_{\eta}(0)$ for some $\eta >0$ with $\mathcal{A}(0) =0$. Let $\Lambda$ be a continuous and linear operator from $E$ onto $G$ and suppose there exists $K_0 >0$ such that
\be\label{estimavt}
||e||_E \leq K_0||\Lambda (e)||_G
\ee
and that there exists $\delta < K_0^{-1}$ such that
  \be\label{strictdiffer-1}
|| \mathcal{A} (e_1) - \mathcal{A} (e_2) -\Lambda(e_1-e_2)|| \leq \delta ||e_1 -e_2||
\ee
whenever $e_1,e_2 \in B_{\eta}(0)$. Then the equation $\mathcal{A}(e) = h $ has a solution $e \in B_{\eta}(0)$ whenever $||h||_G \leq c \eta$, where $c =  K_0^{-1}-\delta$.
\end{theorem}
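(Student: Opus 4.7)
The plan is to recast the equation $\mathcal{A}(e) = h$ as a fixed point problem and apply the Banach contraction principle. A preliminary observation is that hypothesis \eqref{estimavt} together with the surjectivity of $\Lambda$ forces $\Lambda$ to be a linear homeomorphism from $E$ onto $G$: injectivity is immediate from \eqref{estimavt} (if $\Lambda e = 0$ then $\|e\|_E \le 0$), and the same bound yields $\|\Lambda^{-1}\|_{\mathcal{L}(G,E)} \leq K_0$. Thus a bounded linear right inverse of $\Lambda$ is available, with norm at most $K_0$.

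With this in hand, I would introduce the map $F : B_\eta(0) \to E$ defined by
$$F(e) := \Lambda^{-1}\bigl(\Lambda e - \mathcal{A}(e) + h\bigr),$$
so that the fixed points of $F$ correspond exactly to the solutions of $\mathcal{A}(e)=h$. Writing
$$F(e_1) - F(e_2) = \Lambda^{-1}\bigl(\Lambda(e_1-e_2) - (\mathcal{A}(e_1) - \mathcal{A}(e_2))\bigr),$$
hypothesis \eqref{strictdiffer-1} combined with the operator-norm bound on $\Lambda^{-1}$ gives $\|F(e_1) - F(e_2)\|_E \leq K_0\delta\,\|e_1 - e_2\|_E$, and because $K_0\delta < 1$ the map $F$ is a strict contraction.

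It remains to verify that $F$ sends $B_\eta(0)$ into itself. Using $\mathcal{A}(0)=0$, one has $F(0) = \Lambda^{-1} h$, and for $e \in B_\eta(0)$ I would estimate
$$\|F(e)\|_E \leq \|F(e) - F(0)\|_E + \|F(0)\|_E \leq K_0 \delta\, \eta + K_0\|h\|_G \leq K_0\delta\, \eta + K_0 c\, \eta = \eta,$$
where the final equality uses the definition $c = K_0^{-1} - \delta$. The Banach fixed point theorem then produces an $e \in B_\eta(0)$ with $F(e)=e$, equivalently $\mathcal{A}(e)=h$. There is no real obstacle in the argument: it is a textbook contraction-mapping proof, and the only subtle point is that the calibration of the constants $c$ and $\delta$ in the statement is precisely what is needed to make the invariance of $B_\eta(0)$ under $F$ compatible with the contraction constant $K_0\delta$.
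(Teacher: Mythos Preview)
The paper does not supply its own proof of this statement; it is quoted from Graves \cite{Graves} as a black box. Your contraction-mapping argument is correct and is the natural proof under the hypotheses as stated: the bound \eqref{estimavt} together with surjectivity makes $\Lambda$ a linear isomorphism with $\|\Lambda^{-1}\|\le K_0$, after which the map $F(e)=\Lambda^{-1}(\Lambda e-\mathcal{A}(e)+h)$ is a $K_0\delta$-contraction sending the closed ball of radius $\eta$ into itself, exactly as you computed. The only cosmetic point is that the fixed-point theorem needs a complete (hence closed) ball; if $B_\eta(0)$ is read as the open ball, one either passes to the closure using the Lipschitz bound \eqref{strictdiffer-1} to extend $\mathcal{A}$, or runs the argument on $\overline{B_{\eta'}(0)}$ for $\eta'<\eta$. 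This is a harmless technicality and does not affect the substance of your proof.
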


\begin{remark}
In the case where $ \mathcal{A} \in C^1(E;G)$,using the mean value theorem,  it can be shown, that for any $\delta < K_0^{-1}$, inequality \eqref{strictdiffer-1} is satisfied with $\Lambda = \mathcal{A}'(0)$ and $\eta >0$ the continuity constant at zero, i. e.,
\be\label{constantuniform1}
||\mathcal{A}'(e) -\mathcal{A}'(0)||_{\mathcal{L}(E;G)} \leq \delta
\ee
whenever $||e||_E \leq \eta$.
\end{remark}

%
%
%
%
%
%
In our setting, we use this theorem with the space $E$ and
$$
G = X  \times Y,
$$
where

\begin{align}\label{X}
X= \{ (h_1, h_2, H_3); & \   e^{-5/2s\widehat{\beta}}\widehat{\gamma}^{3}h_1 \in L^2(Q),  e^{-5/2s\widehat{\beta}}\widehat{\gamma}^{2}h_2 \in L^2(0,T;H^1(\Omega),  \\
& e^{-5/2s\widehat{\beta}}\widehat{\gamma}^{-5/2}H_3 \in \bold{L}^2(Q) \  \text{and} \  \int_\Omega h_1(x,t) dx = 0 \ \text{a. e.}  \ t \in (0,T)  \}, \nonumber
\end{align}
\begin{align}
Y = \{ (z_0,w_0, y_0 ) \in H^1(\Omega) \times H^2(\Omega)\times \bold{V}; \  \int_\Omega z_0dx=0,  \ \frac{\partial w_0}{\partial \nu} =0 \ \text{on} \ \partial \Omega \}
\end{align}

and the operator
\begin{align*}
\mathcal{A}(z,w,y, g_1, (N-2)g_2) =\biggl(&\mathcal{L}(z,w, y) + (0,0, \nabla p)+( y\cdot \nabla z +\nabla \cdot (z\nabla w), zw+ y\cdot \nabla w, (y\cdot \nabla) y) \\
& - ( 0, g_1 \chi_1, (N-2)g_2\chi_2), z(.,0), w(.,0), y(.,0) \biggl),
\end{align*}
$(z,w,y, p, g_1, (N-2)g_2)) \in E$. We have
$$
\mathcal{A}'(0,0,0, 0,0) =\biggl(\mathcal{L}(z,w, y)  + (0,0, \nabla p)  - ( 0, g_1 \chi_1, (N-2)g_2\chi_2), z(.,0), w(.,0), y(.,0) \biggl),
$$
for all $(z,w,y, p, g_1, (N-2)g_2)) \in E$.

In order to apply  Theorem \ref{teoremadecontrol}  to our problem, we must check that the previous framework fits the regularity required. This is done using the following proposition.

\begin{proposition}\label{AC1}
$\mathcal{A} \in C^1(E;G)$.
\end{proposition}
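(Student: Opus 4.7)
The plan is to observe that $\mathcal{A}$ is the sum of a continuous affine map and three continuous bilinear maps, and then invoke the fact that any continuous polynomial map between Banach spaces is $C^\infty$ (hence $C^1$). Concretely, I would split $\mathcal{A} = \Lambda_0 + \mathcal{N}$, where $\Lambda_0$ collects all the linear terms (including the evaluations at $t=0$) and $\mathcal{N}$ is built from the three bilinear terms
$$\mathcal{N}_1 = y\cdot\nabla z + \nabla\cdot(z\nabla w), \qquad \mathcal{N}_2 = zw + y\cdot\nabla w, \qquad \mathcal{N}_3 = (y\cdot\nabla)y.$$
Continuity of $\Lambda_0 : E \to G$ is immediate from the definition of $\|\cdot\|_E$, since every summand of $\Lambda_0(z,w,y,p,g_1,(N-2)g_2)$ is controlled by an explicit piece of that norm, and the trace estimates at $t=0$ are built into the definition of $E$ via the $L^\infty_t$ regularity required there.

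For each of the three bilinear terms I would view $\mathcal{N}_i$ as the diagonal of a bounded bilinear map $B_i : E\times E \to X_i$ and prove the corresponding weighted estimate. The workhorse is the remark at the end of Proposition \ref{UnifResult} that $s^b e^{as\widehat\beta}\widehat\gamma^c$ is bounded on $[0,T]$ for every $a>0$ and $b,c\in\mathbb{R}$, which lets me freely redistribute weights as long as the resulting exponent of $e^{s\widehat\beta}$ remains strictly negative. For the typical term $\nabla\cdot(z\nabla w) = \nabla z\cdot\nabla w + z\Delta w$, for instance, I would split
$$e^{-5/2\,s\widehat\beta}\widehat\gamma^3 = \bigl(e^{-5/4\,s\widehat\beta}\widehat\gamma^{13/4}\bigr)\bigl(e^{-5/4\,s\widehat\beta}\widehat\gamma^{-1/4}\bigr),$$
apply H\"older and the Sobolev embedding $H^2(\Omega)\hookrightarrow L^\infty(\Omega)$ (valid for $N\leq 3$), and combine $e^{-5/4\,s\widehat\beta}\widehat\gamma^{13/4}z \in L^\infty_t H^1_x\cap L^2_t H^2_x$ with $e^{-5/4\,s\widehat\beta}\widehat\gamma^{-1/4}\nabla w \in L^2_t \bold{H}^2_x$ to land in $L^2(Q)$ with the required weight. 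The term $y\cdot\nabla z$ is treated by splitting the weight against the bounds $e^{-3/2\,s\widehat\beta}\widehat\gamma^{-2-2/11}y\in L^\infty_t\bold{V}\hookrightarrow L^\infty_t \bold{L}^6_x$ and $\nabla z \in L^2_t L^6_x$ (with its weight), absorbing the leftover $s^b e^{as\widehat\beta}\widehat\gamma^c$ with $a>0$ via the boundedness remark.

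The main obstacle will be the bookkeeping for $\mathcal{N}_2 \in L^2(0,T;H^1(\Omega))$ and $\mathcal{N}_3 \in \bold{L}^2(Q)$: both require estimating spatial derivatives of a product, producing auxiliary terms such as $\nabla y\cdot\nabla w$, $y\cdot\nabla^2 w$, and $(y\cdot\nabla)\nabla y$ that must be pushed through weighted Sobolev embeddings. In each case, after splitting the target weight into a product of weights appearing in the definition of $\|\cdot\|_E$ and invoking the $L^2_t\bold{H}^2_x\cap L^\infty_t\bold{V}$ regularity of $y$ together with the weighted $L^2_t H^3_x$ regularity of $w$, the residual factor will have the form $s^b e^{as\widehat\beta}\widehat\gamma^c$ with $a>0$, hence is uniformly bounded. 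Once continuity of each $B_i$ is established, the Fr\'echet derivative of $\mathcal{N}$ at a point $e_0\in E$ is the symmetrization $B_i(e_0,\cdot)+B_i(\cdot,e_0)$, so $\mathcal{N}\in C^\infty(E;X)$, and therefore $\mathcal{A}\in C^1(E;G)$.
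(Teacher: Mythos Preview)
Your approach is correct and essentially identical to the paper's: both reduce to observing that the nonlinear part of $\mathcal{A}$ is a continuous bilinear map $E\times E\to X$, proved by splitting the target weight as a product of weights appearing in the $E_0$-norm and invoking Sobolev embeddings. One minor remark: the component $\mathcal{N}_3=(y\cdot\nabla)y$ only needs to land in weighted $\bold{L}^2(Q)$ (not $L^2_tH^1_x$), so no term of the form $(y\cdot\nabla)\nabla y$ actually arises---the paper simply pairs $e^{-3/2s\widehat\beta}\widehat\gamma^{-2-2/11}y\in L^\infty_t\bold{V}$ with $e^{-3/2s\widehat\beta}\widehat\gamma^{-2-2/11}\nabla\tilde y\in L^2_t\bold{L}^2_x$.
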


\begin{proof}
All terms appearing in  $\mathcal{A}$ are linear (and consequently $C^1$), except for $( y\cdot \nabla z +\nabla \cdot (z\nabla w), zw+ y\cdot \nabla w, (y\cdot \nabla) y)$. However, the operator
\be\label{bilinear-fe}
\bigl((z,w, y,g_1, g_2), (\tilde{z},\tilde{w}, \tilde{g}_1, \tilde{g}_2) \bigl) \mapsto ( y\cdot \nabla \tilde{z} +\nabla \cdot (z\nabla \tilde{w}), z\tilde{w}+ y\cdot \nabla \tilde{w}, (y\cdot \nabla) \tilde{y})
\ee
is bilinear, so it suffices to prove its continuity from $E\times E$ to $X$.

In fact, we have
\begin{align*}
||\nabla \cdot (z\nabla \tilde{w})||_{X_1} & = ||z\Delta \tilde{w} + \nabla z \cdot \nabla \tilde{w}||_{L^2(e^{-5s\widehat{\beta}}\widehat{\gamma}^{6};Q)}  \\
&\leq C  ||e^{-5/2s\widehat{\beta}} \widehat{\gamma}^{3} z\Delta \tilde{w}||_{L^2(Q)} +  ||e^{-5/2s\widehat{\beta}}\widehat{\gamma}^{3} \nabla z\cdot \nabla \tilde{w}||_{L^2(Q)} \\
&\leq  C \biggl( ||e^{-5/4\widehat{\beta}} \widehat{\gamma}^{13/4}ze^{-5/4\hat{\beta}} \widehat{\gamma}^{-1/4} \Delta \tilde{w}||_{L^2(Q)} \nonumber \\
& \ \ \ +  ||e^{-5/4s\widehat{\beta}}\widehat{\gamma}^{13/4} \nabla ze^{-5/4s\widehat{\beta}}\widehat{\gamma}^{-1/4} \nabla \tilde{w}||_{L^2(Q)} \biggl) \\
& \leq C \biggl( ||e^{-5/4s\widehat{\beta}} \widehat{\gamma}^{13/4}z||_{L^{\infty}(0,T; H^1(\Omega))}||e^{-5/4s\widehat{\beta}} \widehat{\gamma}^{-1/4} \Delta \tilde{w}||_{L^2(0,T; H^1(\Omega))} \\
&  \ \ \  + ||e^{-5/4s\widehat{\beta}}\widehat{\gamma}^{13/4} \nabla z||_{L^{\infty}(0,T; \bold{L}^2(\Omega))}||e^{-5/4s\widehat{\beta}}\widehat{\gamma}^{-1/4} \nabla \tilde{w}||_{L^2(0,T; \bold{H}^2(\Omega))}\biggl),
\end{align*}
for a positive constant $C$.

For the other term, we have
\begin{align*}
|| y\cdot \nabla \tilde{z}||_{X_1} & = || y\cdot \nabla \tilde{z}||_{L^2(e^{-5s\widehat{\beta}}\hat{\gamma}^{6};Q)} \\
& \leq C|| e^{-5/4s\widehat{\beta}}\widehat{\gamma}^{13/4} y ||_{L^{\infty}(0,T; \bold{V}))} ||e^{-5/4s\widehat{\beta}}\widehat{\gamma}^{13/4} \nabla \tilde{z}||_{L^{2}(0,T; \bold{H}^1(\Omega))} \\
& \leq C|| e^{-3/2s\widehat{\beta}}\widehat{\gamma}^{-2-2/11} y ||_{L^{\infty}(0,T; \bold{V}))} ||e^{-5/4s \widehat{\beta}} \widehat{\gamma}^{13/4} \nabla \tilde{z}||_{L^{2}(0,T; \bold{H}^1(\Omega))}
\end{align*}
Analogousy,
\begin{align*}
||z\tilde{w}||_{X_2}+ || y\cdot \nabla \tilde{w})||_{X_2} & = || y\cdot \nabla \tilde{w}||_{L^2(e^{-5s\widehat{\beta}}\widehat{\gamma}^{4};0,T;H^{1}(\Omega))}+  || z\tilde{w}||_{L^2(e^{-5s\widehat{\beta}}\widehat{\gamma}^{4};0,T;H^{1}(\Omega))}  \\
& \leq C|| | e^{-3/2s\widehat{\beta}}\widehat{\gamma}^{-2-2/11} y ||_{_{L^{\infty}(0,T; \bold{V}))}} ||e^{-5/4s\widehat{\beta}}\widehat{\gamma}^{-1/4} \nabla \tilde{w}||_{L^{2}(0,T; \bold{H}^2(\Omega))} \\
& + C||e^{-5/4s\widehat{\beta}}\widehat{\gamma}^{13/4}  z||_{L^{2}(0,T; H^1(\Omega))}  ||e^{-5/4s\widehat{\beta}}\widehat{\gamma}^{-1/4} \nabla \tilde{w}||_{L^{2}(0,T; \bold{H}^2(\Omega))}
\end{align*}
Finally, for the last term, we have
\begin{align*}
|| (y\cdot \nabla) \tilde{y})||_{X_3} & \leq   C||(y\cdot \nabla) \tilde{y}) ||_{L^2(e^{-5s\widehat{\beta}}\widehat{\gamma}^{-5}; Q)} \\
& \leq  C || e^{-3/2s\widehat{\beta}}\widehat{\gamma}^{-2-2/11} y ||_{L^{\infty}(0,T; \bold{V}))} || e^{-3/2s\widehat{\beta}}\widehat{\gamma}^{-2-2/11} \nabla \tilde{y} ||_{L^{2}(0,T; \bold{L}^2(\Omega)))}
\end{align*}

Therefore, continuity of \eqref{bilinear-fe} is established and the  proof Proposition \ref{AC1} is finished.
\end{proof}

An application of  Theorem \ref{teoremadecontrol}  gives the existence of $\delta, \eta > 0$  such that  if $||(n_0 - M,c_0 - M_0, u_0)|| \leq \eta/(K_0^{-1}-\delta)$, then there exists a control $(g_1, (N-2)g_2)$ such that the associated solution $(z,w, y, p)$ to \eqref{IFS} verifies $z(T) = w(T) = 0, y(T)=0$ and $||(z, w, y, g_1, (N-2)g_2)||_E \leq \eta$.  This concludes the proof of  Theorem \ref{mainresultt}.


\appendix

\section{Some technical results}

In this section, we state some technical  results we used along this paper.

The first result will be a Carleman estimate for the solutions of the parabolic equation:
\begin{equation}\label{eq:heatnonhom}
 u_t - \Delta u = f_0 + \sum_{j=1}^N \partial_j f_j \ \mbox{in} \ Q,
\end{equation}
where $f_0,f_1,\dots,f_N\in L^2(Q)$.

The following result is proved in \cite[Theorem 2.1]{IMYamPuel}.
\begin{lemma}\label{teo:Cnonhom}
There exists a constant $\widehat\lambda_0$ only depending on $\Omega$, $\omega_0$, $\eta_0$ and $\ell$ such that for any $\lambda>\widehat\lambda_0$ there exist two constants $C(\lambda)>0$ and $\widehat{s}(\lambda)$, such that for every $s\geq \widehat{s}$ and every $u\in L^2(0,T;H^1(\Omega))\cap H^1(0,T;H^{-1}(\Omega))$ satisfying \eqref{eq:heatnonhom}, we have
\begin{multline}\label{eq:Cnonhom}
s^{-1}\int \! \! \! \int_Q e^{2s\alpha} \phi^{-1}|\nabla u|^2 dx dt + s\int \! \! \! \int_Q e^{2s\alpha} \phi |u|^2 dx dt \\
\leq C\left( s^{-1/2} \|e^{s\alpha}\phi^{-1/4}u\|^2_{H^{1/4,1/2}(\Sigma)} + s^{-1/2} \|e^{s\alpha}\phi^{-1/4 + 1/11}u\|^2_{L^2(\Sigma)} \right.\\
 + s^{-2}\int \! \! \! \int_Q e^{2s\alpha}\phi^{-2}|f_0|^2dx dt
 +\sum_{j=1}^N\int \! \! \! \int_Q e^{2s\alpha}|f_j|^2 dx dt \\
\left.+ s\int \! \! \! \int_{\omega_0\times (0,T)}e^{2s\alpha}\phi|u|^2 dx dt \right).
\end{multline}
\end{lemma}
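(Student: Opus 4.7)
The plan is to reduce \eqref{eq:Cnonhom} to the classical Fursikov--Imanuvilov Carleman inequality via a decomposition that isolates the boundary trace, the $L^2$ source $f_0$, and the divergence-form source $\sum_j \partial_j f_j$. First I would lift the boundary data: let $\widetilde u$ be a parabolic extension of $u|_\Sigma$, obtained for instance by solving a heat equation in $Q$ with Dirichlet data $u|_\Sigma$ and zero initial condition. By the sharp Lions--Magenes trace theorem, $\widetilde u \in L^2(0,T;H^{3/2}(\Omega)) \cap H^{3/4}(0,T;L^2(\Omega))$ with norm bounded by $\|u\|_{H^{1/4,1/2}(\Sigma)}$. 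Then $v := u - \widetilde u$ has zero Dirichlet trace on $\Sigma$ and satisfies $v_t - \Delta v = f_0 + \sum_j \partial_j f_j - (\widetilde u_t - \Delta \widetilde u)$ in $Q$; weighting by the Carleman factor $e^{s\alpha}\phi^{-1/4}$ converts the residual $\widetilde u_t - \Delta \widetilde u$ contribution into exactly the first boundary term of the right-hand side.

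For $v$, I would then apply the classical Carleman estimate with zero Dirichlet data. The $f_0$ source contributes the bound $s^{-2}\iint e^{2s\alpha}\phi^{-2}|f_0|^2$ directly. For the divergence-form source, I would proceed by duality: prove the analogous Carleman estimate for the adjoint (backward) heat equation with zero boundary data and $L^2$-class forcing, and then transpose via Riesz representation. Integrating by parts in $x_j$ transfers the derivative from $f_j$ onto the weighted test function; because the left-hand side already carries the factor $s^{-1}\phi^{-1}|\nabla v|^2$, the derivative cost is balanced and $f_j$ enters without any additional $\phi$ weight, producing $\iint e^{2s\alpha}|f_j|^2$. The local observation on $\omega_0 \times (0,T)$ is inherited directly from the classical Carleman inequality, since it is already built into the weight $\eta_0$ via the condition $|\nabla \eta_0| > 0$ on $\overline{\Omega \setminus \omega_0}$.

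The hardest part will be tracking the precise $\phi$ exponents, in particular the exponent $-1/4 + 1/11$ appearing in the second boundary term. This shift by $1/11$ is a signature of the $\ell(t)^{-11}$ factor in $\alpha$: the time derivative of the lifted function $\widetilde u$ brings a factor $|\partial_t \alpha| \lesssim \phi^{12/11}$, which, after being traded against the $H^{3/4}$ time regularity of the trace via a Sobolev interpolation, shifts the natural power $-1/4$ by precisely $1/11$. One must then verify that the commutators between the Carleman weight $e^{s\alpha}\phi^\beta$ and the spatial derivatives $\partial_j$ contribute only lower-order terms, which can be absorbed into the left-hand side once $s \geq \widehat s(\lambda)$ is taken large enough. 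The factor $s^{-1/2}$ in front of both trace contributions emerges from the standard Carleman scaling after all the weight powers from the two steps have been tallied.
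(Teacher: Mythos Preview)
The paper does not prove this lemma: it is quoted verbatim as \cite[Theorem~2.1]{IMYamPuel} (Imanuvilov--Puel--Yamamoto), so there is no ``paper's own proof'' to compare against. Your sketch is therefore not being measured against anything in this manuscript.

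As for the argument you outline, the strategy of first lifting the Dirichlet trace by an auxiliary heat equation and then applying the homogeneous Fursikov--Imanuvilov estimate is not the route taken in \cite{IMYamPuel}. There the authors work directly with the conjugated function $w=e^{s\alpha}u$, compute the commutator between $\partial_t-\Delta$ and the weight, and integrate by parts; the boundary terms in \eqref{eq:Cnonhom} are exactly the boundary integrals that survive this computation, already carrying the weight $e^{s\alpha}\phi^{-1/4}$ because that is what multiplies $u$ after conjugation. Your lifting approach is problematic for two reasons. First, the extension $\widetilde u$ you build from $u|_\Sigma$ via a heat equation does not respect the Carleman weight: the trace norms in the statement are of $e^{s\alpha}\phi^{-1/4}u$, not of $u$, and the parabolic lifting does not commute with multiplication by $e^{s\alpha}$, so you cannot simply bound the weighted $\widetilde u$ by the weighted trace norm. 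Second, your explanation of the $1/11$ shift is heuristic at best: the extra $L^2(\Sigma)$ term with exponent $-1/4+1/11$ in \cite{IMYamPuel} arises from a specific boundary commutator involving $\alpha_t$ (indeed $|\alpha_t|\le CT\phi^{1+1/11}$), not from ``trading $H^{3/4}$ time regularity via Sobolev interpolation''. If you want to reconstruct a proof, follow the direct conjugation method rather than the trace-lifting route.
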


Recall that
$$\|u\|_{H^{\frac{1}{4},\frac{1}{2}}(\Sigma)}=\left(\|u\|^2_{H^{1/4}(0,T;L^2(\partial\Omega))} + \|u\|^2_{L^{2}(0,T;H^{1/2}(\partial\Omega))} \right)^{1/2}.$$

We now state a Carleman estimate for solutions of the heat equation with homogeneous Neumann boundary condition.
\begin{lemma}\label{lemma-4-CSG}
There exist $C = C(\Omega, \omega')$ and  $\lambda_0 = \lambda_0(\Omega, \omega')$ such that,  for every $\lambda \geq \lambda_0$, there exists $s_0 = s_0(\Omega, \omega', \lambda)$ such that, for any $s \geq s_0(T^{11} + T^{22})$, any $q_0 \in L^2(\Omega)$ and any  $f\in L^2(\Omega)$, the weak solution to
\begin{equation}\label{heat-neumann}
\left |
\begin{array}{ll}
 q_{t}  - \Delta q  = f  &     \mbox{in}  \  \ Q,  \\
\frac{\partial q}{\partial \nu} = 0     &    \mbox{on}  \  \   \Sigma, \\
q(x,0) = q_0&    \mbox{in}    \  \  \Omega,
\end{array}
\right.
\end{equation}
satisfies
\begin{align}
I_{\beta}(s, q) & \leq C\biggl(s^\beta \iint\limits_Q e^{2s\alpha}\phi^{\beta}|f|^2 dxdt  \nonumber + s^{\beta+3} \iint\limits_{\omega' \times (0,T)} e^{2s\alpha}{\phi}^{\beta+3}|q|^2 dxdt \biggl),
\end{align}
for all $\beta \in \mathbb{R}$.
\end{lemma}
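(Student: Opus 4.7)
The plan is to establish the estimate in three successive stages: (i) prove the $\widehat{I}_0$ portion (i.e.\ the $|q|^2$ and $|\nabla q|^2$ terms at $\beta=0$) via the Fursikov--Imanuvilov duality method adapted to Neumann conditions; (ii) upgrade to the full $I_0$, including the $q_t$ and $\Delta q$ terms, using the PDE itself; and (iii) pass from $\beta=0$ to arbitrary $\beta\in\mathbb{R}$ through a weighted substitution.

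For stage (i), I would set $\psi=e^{s\alpha}q$ and split the conjugated heat operator $e^{s\alpha}(q_t-\Delta q)=L_1\psi+L_2\psi$ into its symmetric and skew-symmetric parts. Squaring the identity $L_1\psi+L_2\psi=e^{s\alpha}f$ in $L^2(Q)$ and expanding $2(L_1\psi,L_2\psi)_{L^2(Q)}$ via repeated integration by parts yields positive pointwise terms of sizes $s\lambda^2\phi|\nabla\eta_0|^2|\nabla\psi|^2$ and $s^3\lambda^4\phi^3|\nabla\eta_0|^4|\psi|^2$, lower-order volume terms absorbable for $s,\lambda$ large, plus boundary terms on $\Sigma$. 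The crucial feature under Neumann conditions is that, with $\eta_0$ chosen so that $\partial_\nu\eta_0\leq 0$ on $\partial\Omega$ and $\partial_\nu q=0$, the boundary contributions either vanish identically or appear with favorable sign. Localizing the region where $|\nabla\eta_0|$ can degenerate to $\omega_0$ produces the desired estimate for $\widehat{I}_0$.

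For stage (ii), since $q_t=\Delta q+f$, one has $|q_t|^2\leq 2|\Delta q|^2+2|f|^2$, so only the $\Delta q$ term needs to be added. Multiplying the PDE by $s^{-1}e^{2s\alpha}\phi^{-1}\Delta q$, integrating over $Q$, integrating by parts in space (the boundary terms vanish thanks to $\partial_\nu q=0$) and in time (using that $e^{2s\alpha}\to 0$ at $t=0,T$), and then applying Cauchy-Schwarz produces precisely the $s^{-1}\phi^{-1}(|\Delta q|^2+|q_t|^2)$ term on the left, controlled by the $\widehat{I}_0$ estimate already obtained plus the $f$-term on the right. For stage (iii), I apply the $\beta=0$ estimate to the rescaled unknown $\widetilde{q}=\phi^{\beta/2}q$, which satisfies $\widetilde{q}_t-\Delta\widetilde{q}=\phi^{\beta/2}f + R$ with $R=(\phi^{\beta/2})_t q-(\Delta\phi^{\beta/2})q-2\nabla\phi^{\beta/2}\cdot\nabla q$. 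Because $|\phi_t|\leq C T\phi^{12/11}$ and $|\nabla\phi|\leq C\lambda\phi$, every component of $R$ produces, after insertion in the $\beta=0$ estimate, a contribution of strictly lower order in $s$ and $\phi$ than the main terms of $I_\beta(s,q)$, and is therefore absorbed for $s\geq s_0(T^{11}+T^{22})$. The factor $\phi^{\beta/2}$ in front of $f$ and in the local observation term correctly reproduces the powers $s^\beta\phi^\beta|f|^2$ and $s^{\beta+3}\phi^{\beta+3}|q|^2$ in the statement.

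The main obstacle is purely bookkeeping: one must track that every lower-order term generated in stage~(i) (from the many integrations by parts in the cross-product computation) and every remainder in $R$ from stage~(iii) is strictly dominated by the two principal weights $s^{3+\beta}\phi^{3+\beta}|q|^2$ and $s^{1+\beta}\phi^{1+\beta}|\nabla q|^2$, with the preserved Neumann structure preventing any boundary obstruction. Since the estimate is essentially a folklore extension of Fursikov-Imanuvilov to weighted power $\beta$, the delicate analytical input is confined to stage~(i); the rest is mechanical.
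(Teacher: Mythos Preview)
Your outline follows exactly the route the paper itself indicates (it simply says the lemma ``can be deduced from the Carleman inequality for the heat equation with homogeneous Neumann boundary conditions given in \cite{F-Im}''), and stages (i)--(ii) are correct in spirit. However, stage (iii) contains a genuine technical gap: the rescaled function $\widetilde{q}=\phi^{\beta/2}q$ does \emph{not} satisfy the homogeneous Neumann condition. Since $\eta_0>0$ in $\Omega$, $\eta_0=0$ on $\partial\Omega$, and $|\nabla\eta_0|>0$ on $\overline{\Omega\setminus\omega_0}$, one has $\partial_\nu\eta_0<0$ on $\partial\Omega$, hence
\[
\partial_\nu\widetilde{q}=\bigl(\partial_\nu\phi^{\beta/2}\bigr)q+\phi^{\beta/2}\partial_\nu q
=\tfrac{\beta}{2}\lambda(\partial_\nu\eta_0)\,\phi^{\beta/2}\,q\not\equiv 0\quad\text{on }\Sigma\ \ (\beta\neq 0).
\]
You therefore cannot apply the $\beta=0$ estimate, which you proved only under $\partial_\nu q=0$, to $\widetilde{q}$.

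The fix is simple and standard: use the purely time-dependent weight $\widehat{\phi}^{\beta/2}$ (recall $\widehat{\phi}(t)=\min_{x}\phi(x,t)$) in place of $\phi^{\beta/2}$. Then $\widetilde{q}=\widehat{\phi}^{\beta/2}q$ does satisfy $\partial_\nu\widetilde{q}=0$, its equation has the single remainder $R=(\widehat{\phi}^{\beta/2})_t\,q$ (no spatial derivatives of the weight appear), and $|(\widehat{\phi}^{\beta/2})_t|\leq C\widehat{\phi}^{\beta/2+1/11}$ is absorbed exactly as you describe. The final passage from $\widehat{\phi}^{\beta}$ back to $\phi^{\beta}$ on both sides costs only the constant $e^{|\beta|\lambda\|\eta_0\|_\infty}$, thanks to the comparability $\widehat{\phi}\leq\phi\leq e^{\lambda\|\eta_0\|_\infty}\widehat{\phi}$ recorded in the paper. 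With this correction your argument goes through.
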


The proof of Lemma  \ref{lemma-4-CSG} can be deduced from the Carleman inequality for the heat equation with homogeneous Neumann boundary conditions given in \cite{F-Im}.

\null

The next technical result is a particular case of \cite[Lemma 3]{CorGue}.

\begin{lemma}\label{lemmaCoron-Gue}
Let $\beta \in \mathbb{R}$. There exists $C = C(\lambda)>0$ depending only on $\Omega$, $\omega_0$, $\eta_0$ and $\ell$ such that, for every $\lambda \geq 1$, there exist $\widehat{s}_1(\lambda)$ such, for any $ s \geq \widehat{s}_1(\lambda)$, every $T>0$ and every $u\in L^2(0,T;H^1(\Omega))$, we have
\begin{align}\label{eq:lemma1}
s^{3+\beta}\iint\limits_Q &e^{2s\alpha}\phi^{3+\beta}|u|^2 dx dt \nonumber \\
&\leq C \left( s^{1+\beta} \iint\limits_Q e^{2s\alpha}\phi^{1+\beta}|\nabla u|^2 dx dt + s^{3+\beta} \int\limits_0^T\int\limits_{\omega_0} e^{2s\alpha}\phi^{3+\beta}|u|^2dx dt \right).
\end{align}
\end{lemma}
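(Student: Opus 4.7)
The estimate is a weighted Poincaré-type inequality driven by the non-degeneracy $|\nabla\eta_0|^2 \geq c_0 > 0$ on $\overline{\Omega\setminus\omega_0}$. I would fix a cutoff $\theta\in C^\infty_c(\Omega;[0,1])$ with $\theta \equiv 1$ on $\Omega\setminus\omega_0$. Since $1-\theta^2$ is supported in $\overline{\omega_0}$, the contribution of $(1-\theta^2)|u|^2$ to the left-hand side of \eqref{eq:lemma1} is trivially absorbed into the local term on the right. It therefore suffices to estimate $J:=s^{3+\beta}\iint_Q \theta^2 e^{2s\alpha}\phi^{3+\beta}|u|^2\,dxdt$, and using $|\nabla\eta_0|^2 \geq c_0$ on $\operatorname{supp}(\theta)\cap(\Omega\setminus\omega_0)=\Omega\setminus\omega_0$, it is enough to control the quantity $s^{3+\beta}\iint_Q \theta^2|\nabla\eta_0|^2 e^{2s\alpha}\phi^{3+\beta}|u|^2\,dxdt$.

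The key ingredient is the algebraic identity obtained from $\nabla\alpha=\lambda\phi\nabla\eta_0$ and $\nabla\phi=\lambda\phi\nabla\eta_0$:
\begin{equation*}
|\nabla\eta_0|^2\, e^{2s\alpha}\phi^{3+\beta} = \frac{1}{2s\lambda\phi}\nabla\eta_0\cdot\nabla(e^{2s\alpha}\phi^{3+\beta}) - \frac{3+\beta}{2s\phi}|\nabla\eta_0|^2\, e^{2s\alpha}\phi^{3+\beta}.
\end{equation*}
The second term on the right is a factor $O(1/(s\phi))$ smaller than the left-hand side and, since $\phi$ is bounded below by a positive constant depending on $\ell$, can be absorbed into the LHS once $s \geq \widehat{s}_1(\lambda)$ is large enough. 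Multiplying the identity by $s^{3+\beta}\theta^2|u|^2$ and integrating over $Q$, I would integrate by parts in the first term; the compact support of $\theta$ in $\Omega$ eliminates the boundary contribution, which is the crucial simplification given that no boundary condition is imposed on $u$. Expanding $\nabla\cdot(\theta^2|u|^2\phi^{-1}\nabla\eta_0)$ produces four kinds of terms: (i) one with $\nabla\theta\cdot\nabla\eta_0\,|u|^2$, supported in $\omega_0$, which drops into the local term of \eqref{eq:lemma1}; (ii) the central $u\nabla u$ cross-term (treated below); and (iii)--(iv) two pieces involving $|\nabla\eta_0|^2\phi^{-1}$ and $\Delta\eta_0\phi^{-1}$, both of size $O(s^{2+\beta}\phi^{2+\beta}|u|^2)$ and thus absorbable into $J$ for $s$ large.

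The cross-term is handled by splitting weights via
\begin{equation*}
s^{2+\beta}\phi^{2+\beta}|u||\nabla u| = \bigl(s^{(3+\beta)/2}\phi^{(3+\beta)/2}|u|\bigr)\bigl(s^{(1+\beta)/2}\phi^{(1+\beta)/2}|\nabla u|\bigr),
\end{equation*}
which works because $(3+\beta)/2 + (1+\beta)/2 = 2+\beta$. Factoring $e^{2s\alpha} = e^{s\alpha}\cdot e^{s\alpha}$ symmetrically and applying Young's inequality with a small parameter $\epsilon>0$ then bounds the cross-term by
$\epsilon\, s^{3+\beta}\iint_Q\theta^2 e^{2s\alpha}\phi^{3+\beta}|u|^2 + C_\epsilon\,s^{1+\beta}\iint_Q e^{2s\alpha}\phi^{1+\beta}|\nabla u|^2$. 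Choosing $\epsilon$ small enough that $c_0^{-1}\epsilon<1/2$, the first piece is absorbed into $J$ while the second is exactly the gradient term appearing in \eqref{eq:lemma1}. Combining these steps yields the claimed estimate on $\Omega\setminus\omega_0$, and the preliminary reduction then gives the full inequality. The main obstacle is purely bookkeeping: ensuring every $O(1/s)$ correction can be absorbed into the LHS above a threshold $\widehat{s}_1(\lambda)$ depending on $\lambda$ and $\ell$; the integration by parts itself is routine once the cutoff $\theta$ is in place.
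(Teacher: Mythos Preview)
Your overall strategy --- spatial integration by parts driven by the identity for $\nabla(e^{2s\alpha}\phi^{3+\beta})$ together with $|\nabla\eta_0|\ge c_0>0$ on $\overline{\Omega\setminus\omega_0}$ --- is precisely what the paper intends (it does not prove the lemma but cites \cite{CorGue}, Lemma~3, and remarks that the proof uses only integration by parts in the space variable).  There is, however, a genuine gap in your execution: the cutoff you postulate does not exist.  Since $\omega_0\subset\subset\Omega$, the set $\Omega\setminus\omega_0$ contains a full neighborhood of $\partial\Omega$; a function $\theta\in C^\infty_c(\Omega)$ with $\theta\equiv 1$ on $\Omega\setminus\omega_0$ would then be identically $1$ near $\partial\Omega$, contradicting compact support in $\Omega$.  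Your plan relies on this cutoff exactly to kill the boundary contribution (``the compact support of $\theta$ in $\Omega$ eliminates the boundary contribution''), so as written the argument breaks at that step.

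The repair is short and is what actually makes the Coron--Guerrero proof work: discard the cutoff and keep the boundary term, observing that it has a sign.  Because $\eta_0>0$ in $\Omega$ and $\eta_0\equiv 0$ on $\partial\Omega$, one has $\partial_\nu\eta_0\le 0$ on $\partial\Omega$.  Integrating
\[
\int_\Omega \frac{|u|^2}{2s\lambda\phi}\,\nabla\eta_0\cdot\nabla\bigl(e^{2s\alpha}\phi^{3+\beta}\bigr)\,dx
\]
by parts over $\Omega$ produces the boundary term
\[
\int_{\partial\Omega}\frac{|u|^2}{2s\lambda\phi}\,e^{2s\alpha}\phi^{3+\beta}\,\partial_\nu\eta_0\,d\sigma\ \le\ 0,
\]
which can simply be dropped in the upper bound; no boundary condition on $u$ is needed.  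With this sign observation in place, the remainder of your plan --- absorbing the $O((s\phi)^{-1})$ corrections for $s\ge\widehat s_1(\lambda)$ and handling the $u\,\nabla u$ cross term by the Young splitting with exponents $(3+\beta)/2$ and $(1+\beta)/2$ --- goes through unchanged and yields \eqref{eq:lemma1}.
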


\begin{remark}
In \cite{CorGue}, slightly different weight functions are used to prove Lemma \ref{lemmaCoron-Gue} Indeed, the authors take $\ell(t)=t(T-t)$. However, this does not change the result since for proving this result we only use integration by parts in the space variable.
\end{remark}

We now present two regularity results for the Stokes system (see \cite{Ladbook}).
\begin{lemma}\label{regStokes}
For every $T>0$ and every $F\in \bold{L}^2(Q)$, there exists a unique solution $u\in L^2(0,T;\bold{H}^2(\Omega))\cap H^1(0,T; \bold{H})$ to the Stokes system
\begin{equation}\label{systemA}
\left |
\begin{array}{ll}
u_t - \Delta u + \nabla p = F  &     \mbox{in}  \  \ Q,  \\
\nabla \cdot u =0 &     \mbox{in}  \  \ Q, \\
 u = 0    &    \mbox{on}  \  \   \Sigma, \\
u(x,0) = 0  &    \mbox{in}    \  \  \Omega,
\end{array}
\right.
\end{equation}
for some $p \in L^2(0,T;H^1(\Omega))$ and there exists a constant $C>0$, depending only on $\Omega$, such that
$$
||u||_{L^2(0,T;\bold{H}^2(\Omega))} + ||u||_{H^1(0,T; \bold{H})} \leq C||F||_{\bold{L}^2(Q)}.
$$
Moreover, if $F \in L^2(0,T;\bold{H}^2(\Omega))\cap H^1(0,T; \bold{L}^2(\Omega))$ and satisfies the compatibility condition
$$
\nabla p_F = F(0) \ \text{in} \ \partial\Omega,
$$
where $p_F$ is any solution of the Neumann boundary value problem
\begin{equation}\label{systemA}
\left |
\begin{array}{ll}
- \Delta p_F = \nabla \cdot F(0)  &     \mbox{in}  \  \ \Omega,  \\
 \frac{\partial p_F}{\partial \nu} = F(0)\cdot \nu     &    \mbox{on}  \  \   \partial \Omega,
 \end{array}
\right.
\end{equation}
then $u \in L^2(0,T;\bold{H}^4(\Omega)) \cap H^1(0,T; \bold{H}^2(\Omega)))$
and there exists a constant $C>0$, depending only on $\Omega$, such that
$$
||u||_{L^2(0,T;\bold{H}^4(\Omega))} + ||u||_{H^1(0,T; \bold{H}^2(\Omega))} \leq C\biggl(||F||_{L^2(0,T;\bold{H}^2(\Omega))} + ||F||_{H^1(0,T; \bold{L}^2(\Omega))}\biggl).
$$

\end{lemma}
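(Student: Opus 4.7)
The plan is to establish both parts by combining the standard maximal $L^2$-regularity theory for the Stokes operator with elliptic regularity for the stationary Stokes system. For Part~1, I would apply the Leray projector $P$ to reformulate the evolution Stokes system as the abstract Cauchy problem $u_t + Au = PF$, $u(0)=0$, where $A = -P\Delta$ with domain $D(A) = \bold{V} \cap \bold{H}^2(\Omega)$ is a positive self-adjoint operator on $\bold{H}$ generating an analytic semigroup. Maximal regularity for $A$ then yields $u \in L^2(0,T;D(A)) \cap H^1(0,T;\bold{H})$ together with the claimed bound in terms of $\|F\|_{\bold{L}^2(Q)}$. The pressure $p$ is then recovered via de Rham's theorem from the identity $\nabla p = F - u_t + \Delta u \in \bold{L}^2(Q)$, giving $p \in L^2(0,T;H^1(\Omega)/\R)$.

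For Part~2, the plan is to differentiate the equation in time. Setting $w := u_t$, the pair $(w, p_t)$ formally solves the Stokes system with source $F_t \in L^2(0,T;\bold{L}^2(\Omega))$ and homogeneous Dirichlet boundary condition (the latter because $u = 0$ on $\Sigma$ for all $t$). Evaluating the original equation at $t=0$ together with $u(0)=0$ gives $w(0) = F(0) - \nabla p(0)$, and the requirements $\nabla \cdot w(0) = 0$ in $\Omega$ and $w(0) \cdot \nu = 0$ on $\partial\Omega$ force $p(0)$ to solve precisely the Neumann problem defining $p_F$. The compatibility condition $F(0) = \nabla p_F$ on $\partial\Omega$ then ensures that $w(0)$ vanishes on $\partial\Omega$; combined with $w(0) \in \bold{H}^1(\Omega)$ (from $H^2$-regularity of the Neumann problem and trace estimates), this places $w(0)$ in $\bold{V}$. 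Applying the version of Part~1 allowing nonzero initial data in $\bold{V}$ --- obtained by splitting $w = w_1 + w_2$ into a homogeneous problem carrying $w(0)$ and a forced problem with source $F_t$ --- yields $u_t = w \in L^2(0,T;\bold{H}^2(\Omega)) \cap H^1(0,T;\bold{H})$ with a bound by $\|F\|_{L^2(0,T;\bold{H}^2(\Omega))} + \|F\|_{H^1(0,T;\bold{L}^2(\Omega))}$.

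The final step is to bootstrap the spatial regularity: for almost every $t$, the pair $(u(t), p(t))$ solves the stationary Stokes system $-\Delta u + \nabla p = F - u_t$ with $\nabla \cdot u = 0$ and $u|_{\partial\Omega} = 0$. Since the right-hand side now lies in $L^2(0,T;\bold{H}^2(\Omega))$, the classical elliptic regularity for the stationary Stokes problem upgrades $u$ to $L^2(0,T;\bold{H}^4(\Omega))$, completing the second estimate. The main obstacle is the careful handling of the time-differentiated initial datum: one has to justify rigorously (by an approximation procedure in $F$, if necessary) that the formal identity $w(0) = F(0) - \nabla p_F$ holds in $\bold{V}$, which is precisely where the compatibility hypothesis is used. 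Everything else is a routine application of the classical maximal regularity and stationary Stokes regularity results in \cite{Ladbook}.
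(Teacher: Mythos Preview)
The paper does not actually prove this lemma: it is stated in the appendix as a known technical result and simply attributed to \cite{Ladbook}. There is therefore no ``paper's own proof'' to compare against.

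Your sketch is the standard and correct route to such maximal-regularity statements: reduce to the abstract Cauchy problem for the Stokes operator to obtain Part~1, then differentiate in time and use the compatibility hypothesis to place $u_t(0)=F(0)-\nabla p_F$ in $\bold{V}$, apply Part~1 (with initial data in $\bold{V}$) to the time-differentiated system, and finally invoke stationary Stokes elliptic regularity to upgrade $u$ to $L^2(0,T;\bold{H}^4(\Omega))$. The only place requiring care is exactly the one you flag---making rigorous the identification of $u_t(0)$ via an approximation of $F$---and this is routine. Your outline is essentially how the result is derived in the cited reference, so nothing is missing.
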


\begin{lemma}\label{regStokes10}
If $F \in L^2(0,T;\bold{V})$, then $u \in L^2(0,T;\bold{H}^3(\Omega)) \cap H^1(0,T; \bold{V}))$
and there exists a constant $C>0$, depending only on $\Omega$, such that
$$
||u||_{L^2(0,T;\bold{H}^3(\Omega))} + ||u||_{H^1(0,T; \bold{V})} \leq C||F||_{L^2(0,T;\bold{V})}.
$$
Furthermore, if $F \in L^2(0,T;\bold{H}^3(\Omega))\cap H^1(0,T; \bold{V})$ 
then  $u \in L^2(0,T;\bold{H}^5(\Omega)) \cap H^1(0,T; \bold{H}^3(\Omega)) \cap H^2(0,T;\bold{V})$ and there exists a constant $C>0$, depending only on $\Omega$, such that
$$
||u||_{L^2(0,T;\bold{H}^5(\Omega))} + ||u||_{H^1(0,T; \bold{H}^3(\Omega))} + ||u||_{H^2(0,T; \bold{V})}  \leq C\bigl(||F||_{L^2(0,T;\bold{H}^3(\Omega))} + ||F_t||_{L^2(0,T; \bold{V})} \bigl).
$$

\end{lemma}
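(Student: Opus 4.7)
The plan is to recast the Stokes problem as an abstract evolution equation and apply the classical maximal regularity theory for the analytic Stokes semigroup, supplemented by elliptic regularity for the stationary Stokes system. Throughout, $A = -P\Delta$ denotes the self-adjoint positive Stokes operator on the space $\bold{H}$ of divergence-free $\bold{L}^2$-fields, with domain $D(A) = \bold{H}^2(\Omega) \cap \bold{V}$ and $D(A^{1/2}) = \bold{V}$. Since $F \in L^2(0,T;\bold{V})$ is divergence-free with vanishing trace on $\partial\Omega$, the Leray projector acts as the identity on $F$, so the Stokes system is equivalent to the abstract ODE $u_t + Au = F$ on $\bold{H}$ with $u(0) = 0$.

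For the first claim, I would test the abstract equation against $A^2 u$ in $\bold{L}^2(\Omega)$. Using the self-adjointness of $A$, this yields the energy identity
\begin{equation*}
\tfrac{1}{2}\tfrac{d}{dt}\|Au\|^2 + \|A^{3/2} u\|^2 = (A^{1/2} F, A^{3/2} u),
\end{equation*}
to which Cauchy--Schwarz, Young's inequality, and integration in time (using $u(0)=0$) produce
\begin{equation*}
\|u\|^2_{L^\infty(0,T;D(A))} + \|u\|^2_{L^2(0,T;D(A^{3/2}))} \leq C\|F\|^2_{L^2(0,T;\bold{V})}.
\end{equation*}
The equation then gives $u_t = F - Au \in L^2(0,T;\bold{V})$. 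To upgrade the $D(A^{3/2})$-bound to an $\bold{H}^3$-bound, I would freeze $t$ and view the system as the stationary Stokes problem $-\Delta u + \nabla p = F - u_t \in \bold{V} \hookrightarrow \bold{H}^1$; the classical elliptic regularity estimate $\|u(\cdot,t)\|_{\bold{H}^3} \leq C\|F - u_t\|_{\bold{H}^1}$ (which follows from the more regular case of Lemma \ref{regStokes}) then yields $u \in L^2(0,T;\bold{H}^3(\Omega))$ with the claimed bound.

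For the more regular claim, I would bootstrap by differentiating in time. Setting $v = u_t$, the function $v$ satisfies $v_t + Av = F_t$ with $v(0) = F(0)$. The Lions--Magenes trace theorem applied to $F \in L^2(0,T;\bold{H}^3) \cap H^1(0,T;\bold{V})$ gives $F(0) \in [\bold{V}, \bold{H}^3]_{1/2,2} \subset D(A)$, and the mild solution formula
\begin{equation*}
v(t) = e^{-At} F(0) + \int_0^t e^{-A(t-s)} F_t(s)\, ds
\end{equation*}
combined with the analyticity estimate $\|A^{3/2} e^{-At} F(0)\|_{\bold{L}^2} \lesssim t^{-1/2}\|A F(0)\|_{\bold{L}^2}$ (which is $L^2_t$-integrable) plus the first-part argument applied to the convolution piece yields $v \in L^2(0,T;D(A^{3/2})) \cap H^1(0,T;\bold{V})$, hence $u_t \in L^2(0,T;\bold{H}^3(\Omega))$ and $u \in H^2(0,T;\bold{V})$. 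Invoking the stationary Stokes elliptic regularity once more on $-\Delta u + \nabla p = F - u_t \in L^2(0,T;\bold{H}^3)$ concludes $u \in L^2(0,T;\bold{H}^5(\Omega))$.

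The main obstacle is the initial-time compatibility arising when differentiating in time: since $u(0) = 0$ but $u_t(0) = F(0) \neq 0$ in general, the first-part argument cannot be applied to $u_t$ as a black box. One must split $u_t$ into a semigroup piece driven by $F(0)$ and an inhomogeneous Duhamel piece driven by $F_t$, and use that the analyticity of $e^{-At}$ supplies exactly the smoothing needed to absorb the initial layer—provided $F(0) \in D(A)$, which is the role of the trace theorem. The recurring technicality of identifying fractional-power domains $D(A^{k/2})$ with closed subspaces of $\bold{H}^k(\Omega)$ carrying the correct boundary/compatibility condition is handled, each time it arises, by appealing back to the stationary Stokes elliptic regularity already packaged in Lemma \ref{regStokes}.
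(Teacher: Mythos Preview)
The paper does not prove this lemma; it is simply stated as a known regularity result with a reference to Lady\v{z}enskaja's monograph \cite{Ladbook}. Your approach---abstract maximal regularity for the Stokes semigroup combined with stationary elliptic regularity, then bootstrapping via time-differentiation---is the standard route and is essentially correct.

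There is one slip worth flagging. In handling the semigroup piece $e^{-At}F(0)$ you invoke the pointwise analyticity bound $\|A^{3/2}e^{-At}F(0)\|_{\bold{L}^2}\lesssim t^{-1/2}\|AF(0)\|_{\bold{L}^2}$ and assert this is ``$L^2_t$-integrable''. It is not: $t^{-1/2}\notin L^2(0,T)$ near the origin, since $\int_0^T t^{-1}\,dt$ diverges. The conclusion you want is nevertheless true, but it comes from the energy identity for the homogeneous problem (test $w_t+Aw=0$, $w(0)=F(0)\in D(A)$, against $A^2w$ to get $\tfrac12\tfrac{d}{dt}\|Aw\|^2+\|A^{3/2}w\|^2=0$, hence $\int_0^T\|A^{3/2}w\|^2\,dt\le\tfrac12\|AF(0)\|^2$), or equivalently from the spectral calculus for the self-adjoint operator $A$. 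With that correction in place, the rest of your argument goes through.
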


\section{Carleman Inequality for the Stokes operator}\label{proofcarlemanstokes}

In this section we prove Lemma \ref{CarlemanStokes} used in the proof of Theorem \ref{CarlemanP}.

\begin{proof} For better comprehension, we divide the proof into several steps.


\null

\textit{Step 1.} \textit{Estimate of $\nabla \nabla \Delta (z_i), \ i=1,3.$}

\null

We begin noticing that  since $F_3 \in L^2(0,T,\bold{V})$, we have that $\rho' v \in L^2(0,T;\bold{H}^3(\Omega)) \cap H^1(0,T;\bold{V})$ (see Lemma \ref{regStokes10} above). Therefore, we can apply the operator $\nabla \nabla \Delta \cdot$ to the equation of $z_i$ (see \eqref{eq:u2}), $i=1,3$,  to get
\begin{equation}\label{Z1}
-\widehat{Z}_{i,t} - \Delta \widehat{Z}_i= -\nabla \nabla(\Delta(\rho' v_i)) \ \mbox{in} \ Q,
\end{equation}
where $\widehat{Z}_i = \nabla \nabla \Delta z_i$. Here, we have used the fact that $\Delta r=0 \ \mbox{in} \ Q$.

Next, we apply Lemma \ref{teo:Cnonhom} to \eqref{Z1}, with $i=1,3$, and add these estimates. This gives
\begin{align}\label{Z2}
\sum_{i=1,3} \widehat{I}_{-2}(s; \widehat{Z}_i) \leq  \ C&\sum_{i=1,3} \biggl( s^{-\frac{1}{2}} \|e^{s\alpha}\phi^{-\frac{1}{4}}\widehat{Z}_i\|^2_{\bold{H}^{\frac{1}{4},\frac{1}{2}}(\Sigma)} + s^{-\frac{1}{2}} \|e^{s\alpha}\phi^{-1/4 + 1/11}\widehat{Z}_i\|^2_{\bold{L}^2(\Sigma)}  \\
& + \int \! \! \! \int_Q e^{2s\alpha} |\rho'|^2|\nabla \Delta v_i|^2 dx dt + s\int \! \! \! \int_{\omega^1_0 \times (0,T)}e^{2s\alpha}\phi|\widehat{Z}_i|^2 dx dt \biggr). \nonumber
\end{align}
 Notice that this can be done because the right-hand side of \eqref{Z1} belongs to $L^2(0,T;\bold{H}^{-1}(\Omega))$.

Now, using  Lemma \ref{lemmaCoron-Gue}, with $\beta =0$, we see that
\begin{align}\label{A1-1}
\sum_{i=1,3} s^3\int \! \! \! \int_Q& e^{2s\alpha}\phi^3| Z_i|^2dx dt \nonumber  \\
&\leq C\sum_{i=1,3}  \left( s \int \! \! \! \int_Qe^{2s\alpha}\phi|\widehat{Z}_i|^2dx dt + s^3\int \! \! \! \int_{\omega^2_0 \times (0,T) } e^{2s\alpha}\phi^3|Z_i|^2 dxdt \right),
\end{align}
for every $s\geq C_1$, where  $Z_i := \nabla \Delta z_i$.

In \eqref{Z2}, we estimate the  local integral of $\widehat{Z}_i$, $i=1,3$, as follows:
\begin{align}
& s\int \! \! \! \int_{\omega_0^1 \times (0,T)}e^{2s\alpha}\phi|\widehat{Z}_i|^2 dx dt \leq  s\int \! \! \! \int_{\omega_0^2 \times (0,T)} \theta_2 e^{2s\alpha}\phi|\nabla Z_i|^2 dx dt \nonumber \\
&= \frac{s}{2}\int \! \! \! \int_{\omega_0^2 \times (0,T)}\Delta( e^{2s\alpha}\phi \theta_2 ) |Z_i|^2  dx dt  - s\int \! \! \! \int_{\omega_0^2 \times (0,T)}\theta_2e^{2s\alpha}\phi \nabla \cdot(\widehat{Z}_i) Z_idx dt  \nonumber \\
& \leq  Cs^3\int \! \! \! \int_{\omega_0^2 \times (0,T) } e^{2s\alpha}\phi^3|Z_i|^2 dxdt +\delta  \widehat{I}_{-2}(s; \widehat{Z}_i) ,
\end{align}
for any $\delta >0$, since
$$
|\Delta( e^{2s\alpha}\phi \theta_2 )| \leq Cs^2\phi^3 e^{2s\alpha}1_{\omega_0^2}.
$$

Hence,
\begin{align}\label{Z3}
\sum_{i=1,3}    \biggl( s^3   \int \! \! \! \int_Q   e^{2s\alpha}&\phi^3| Z_i|^2dx dt +  \widehat{I}_{2}(s; \widehat{Z}_i)\biggr) \nonumber \\
\leq \  C  \ \sum_{i=1,3} & \biggl( s^{-\frac{1}{2}} \|e^{s\alpha}\phi^{-\frac{1}{4}}\widehat{Z}_i\|^2_{\bold{H}^{\frac{1}{4},\frac{1}{2}}(\Sigma)} + s^{-\frac{1}{2}} \|e^{s\alpha}\phi^{-1/4+1/11}\widehat{Z}_i\|^2_{\bold{L}^2(\Sigma)}   \\
& + \int \! \! \! \int_Q e^{2s\alpha} |\rho'|^2|\nabla \Delta v_i|^2 dx dt + s^3\int \! \! \! \int_{\omega_0^2 \times (0,T) } e^{2s\alpha}\phi^3|Z_i|^2 dxdt \biggr). \nonumber
\end{align}

\null

Using again Lemma \ref{lemmaCoron-Gue}, with $\beta =2$, $i=1,3$, we get
\begin{align}\label{Z4}
s^5\int \! \! \! \int_Q e^{2s\alpha}\phi^5|\Delta z_i|^2dx dt   \leq C &  \biggl( s^3 \int \! \! \! \int_Qe^{2s\alpha}\phi^3|Z_i|^2dx dt   \nonumber  \\
&+ s^5\int \! \! \! \int_{\omega_0^3 \times (0,T) } e^{2s\alpha}\phi^5|\Delta z_i|^2 dxdt \biggr),
\end{align}
for every $s\geq C_1$.

From \eqref{Z3} and \eqref{Z4}, we obtain
\begin{align}\label{Z5}
\sum_{i=1,3}  \biggl( s^5\int \! \! \! \int_Q & e^{2s\alpha}\phi^5|\Delta z_i|^2dxdt + s^3\int \! \! \! \int_Q e^{2s\alpha}\phi^3| Z_3|^2dx dt+  \widehat{I}_{-2}(s; \widehat{Z}_3) \biggr) \nonumber \\
 \leq C \sum_{i=1,3} &\biggl( s^{-\frac{1}{2}} \|e^{s\alpha}\phi^{-\frac{1}{4}}\widehat{Z}_i\|^2_{\bold{H}^{\frac{1}{4},\frac{1}{2}}(\Sigma)} + s^{-\frac{1}{2}} \|e^{s\alpha}\phi^{-1/4+1/11}\widehat{Z}_i\|^2_{\bold{L}^2(\Sigma)}  \\
 + \int \! \! \! \int_Q& e^{2s\alpha} |\rho'|^2|\nabla \Delta v_i|^2 dx dt + s^3\int \! \! \! \int_{\omega_0^2 \times (0,T) } e^{2s\alpha}\phi^3|Z_i|^2 dxdt  \nonumber \\
&+ s^5\int \! \! \! \int_{\omega_0^3 \times (0,T) } e^{2s\alpha}\phi^5|\Delta z_i|^2 dxdt\biggr), \nonumber
\end{align}
for every $s\geq C_1$.

\null


\textit{Step 2.} {\textit{Estimate of $\nabla \Delta v_i$, $i=1,3$.}}

\null

By \eqref{eq:regularity} and the fact that $s^{11/5} e^{2s\widehat{\alpha}} \widehat{\phi}^{11/5}$ is bounded, we  estimate the integrals involving  $	\nabla \Delta v_i$, $i=1,3$, on the right-hand side of \eqref{Z5}. Indeed,

\begin{align}\label{Z6}
\int \! \! \! \int_Q e^{2s\alpha} |\rho'|^2|\nabla \Delta v_i|^2 dx dt &= \int \! \! \! \int_Qe^{2s\alpha} |\rho'|^2 |\rho|^{-2} |\nabla\Delta (\rho v_i)|^2 dx dt \nonumber \\
&\leq C \biggl( s^{2+2/11}\int \! \! \! \int_Q e^{2s\widehat{\alpha}}\widehat{\phi}^{2+2/11}|\nabla \Delta w_i|^2 dx dt + s^{2+2/11}\int \! \! \! \int_Q e^{2s\widehat{\alpha}} \widehat{\phi}^{2+2/11}|Z_i|^2 dx dt\biggr) \nonumber \\
&\leq C\biggl(\|\rho F_3\|^2_{L^2(0,T;\bold{V})} + s^{2+2/11}\int \! \! \! \int_Q e^{2s\widehat{\alpha}} \widehat{\phi}^{3}|Z_i|^2 dx dt \biggr).
\end{align}
since
$$
|\alpha_t|^2 \leq CT^2\phi^{2+2/11}.
$$

Therefore, from \eqref{Z5} and \eqref{Z6}, we  have
\begin{align}\label{Z7}
\sum_{i=1,3}\biggl(s^5\int \! \! \! \int_Q & e^{2s\alpha}\phi^5|\Delta z_i|^2dxdt + s^3\int \! \! \! \int_Q e^{2s\alpha}\phi^3| Z_i|^2dx dt+  \widehat{I}_{-2}(s; \widehat{Z}_i) \biggr) \nonumber \\
 \leq& \ C\sum_{i=1,3}\biggl( \|\rho F_3\|^2_{L^2(0,T;\bold{V})}+ s^{-\frac{1}{2}} \|e^{s\alpha}\phi^{-\frac{1}{4}}\hat{Z}_i\|^2_{\bold{H}^{\frac{1}{4},\frac{1}{2}}(\Sigma)} + s^{-\frac{1}{2}} \|e^{s\alpha}\phi^{-1/4+1/11}\widehat{Z}_i\|^2_{\bold{L}^2(\Sigma)} \nonumber \\
 &+ s^3\int \! \! \! \int_{\omega_0^2 \times (0,T) } e^{2s\alpha}\phi^3|Z_i|^2 dxdt  + s^5\int \! \! \! \int_{\omega_0^3 \times (0,T) } e^{2s\alpha}\phi^5|\Delta z_i|^2 dxdt
 \biggr).
\end{align}


\textit{Step 3.} \textit{Estimate of a global term of $z_2$}.

\null

From the fact that $\Delta$ defines a norm in $H^2(\Omega)\times H^1_0(\Omega)$, we have
\begin{align}
s^5\int \! \! \! \int_Qe^{2s\widehat{\alpha}}\widehat{\phi}^5 | z_2|^2 dx dt \leq C\sum_{i=1,3} s^5\int \! \! \! \int_Q e^{2s\widehat{\alpha}}\widehat{\phi}^5 |\Delta z_i|^2dx dt,
\end{align}
since $z|_{\partial\Omega}=0$ and $\nabla \cdot z=0$.

Hence,
\begin{align}\label{Z8}
s^5\int \! \! \! \int_Qe^{2s\widehat{\alpha}}&\widehat{\phi}^5 | z_2|^2 dx dt  + \sum_{i=1,3}\biggl(s^5\int \! \! \! \int_Q  e^{2s\alpha}\phi^5|\Delta z_i|^2dxdt + s^3\int \! \! \! \int_Q e^{2s\alpha}\phi^3| Z_i|^2dx dt+  \widehat{I}_{-2}(s; \widehat{Z}_i) \biggr) \nonumber \\
 \leq& \ C\sum_{i=1,3}\biggl( \|\rho F_3\|^2_{L^2(0,T;\bold{V})}+ s^{-\frac{1}{2}} \|e^{s\alpha}\phi^{-\frac{1}{4}}\widehat{Z}_i\|^2_{\bold{H}^{\frac{1}{4},\frac{1}{2}}(\Sigma)} + s^{-\frac{1}{2}} \|e^{s\alpha}\phi^{-1/4+1/11}\widehat{Z}_i\|^2_{\bold{L}^2(\Sigma)} \nonumber \\
 &+ s^3\int \! \! \! \int_{\omega_0^2 \times (0,T) } e^{2s\alpha}\phi^3|Z_i|^2 dxdt  + s^5\int \! \! \! \int_{\omega_0^3 \times (0,T) } e^{2s\alpha}\phi^5|\Delta z_i|^2 dxdt
 \biggr).
\end{align}


\textit{Step 4.} \textit{Estimate of the local integral of $Z_i$, $i=1,3$.}

\null

We have
\begin{align}
& s^3\int \! \! \! \int_{\omega_0^2 \times (0,T)}e^{2s\alpha}\phi^3|Z_i|^2 dx dt \leq  s^3\int \! \! \! \int_{\omega_0^3 \times (0,T)} \theta_3 e^{2s\alpha}\phi^3|\nabla 	\Delta z_i|^2 dx dt \nonumber \\
&= \frac{s^3}{2}\int \! \! \! \int_{\omega_0^3 \times (0,T)}\Delta( e^{2s\alpha}\phi^3 \theta_3 )  |\Delta z_i|^2 dx dt  - s^3\int \! \! \! \int_{\omega_0^3 \times (0,T)}\theta_3e^{2s\alpha}\phi^3 \nabla \cdot Z_i \Delta z_3dx dt  \nonumber \\
& \leq Cs^5\int \! \! \! \int_{\omega_0^3 \times (0,T) } e^{2s\alpha}\phi^5|\Delta z_i|^2 dxdt +  \delta  \widehat{I}_{-2}(s; \widehat{Z}_i),
\end{align}
since
$$
|\Delta( e^{2s\alpha}\phi^3 \theta_3 )| \leq Cs^2\phi^5e^{2s\alpha}1_{\omega_0^3}.
$$

From \eqref{Z8}, we get
\begin{align}\label{Z9}
s^5\int \! \! \! \int_Qe^{2s\widehat{\alpha}}\widehat{\phi}^5 | z_2|^2 dx dt  +& \sum_{i=1,3}\biggl(s^5\int \! \! \! \int_Q  e^{2s\alpha}\phi^5|\Delta z_i|^2dxdt + s^3\int \! \! \! \int_Q e^{2s\alpha}\phi^3| Z_i|^2dx dt+  \widehat{I}_{-2}(s; \widehat{Z}_i) \biggr) \nonumber \\
 \leq \ C&\sum_{i=1,3}\biggl( \|\rho F_3\|^2_{L^2(0,T;\bold{V})}+ s^{-\frac{1}{2}} \|e^{s\alpha}\phi^{-\frac{1}{4}}\widehat{Z}_i\|^2_{\bold{H}^{\frac{1}{4},\frac{1}{2}}(\Sigma)} + s^{-\frac{1}{2}} \|e^{s\alpha}\phi^{-1/4+1/11}\widehat{Z}_i\|^2_{\bold{L}^2(\Sigma)} \nonumber \\
 & + s^5\int \! \! \! \int_{\omega_0^3 \times (0,T) } e^{2s\alpha}\phi^5|\Delta z_i|^2 dxdt
 \biggr).
\end{align}


\textit{Step 5} \textit{Estimate of the $\bold{L}^2$ boundary terms.}

Using the fact that
\begin{align}
||e^{s\widehat{\alpha}}\widehat{Z}_i ||^2_{\bold{L}^2(\Sigma)}& \leq ||e^{s\widehat{\alpha}}\widehat{Z}_i ||^2_{\bold{L}^2(Q)} +  ||s^{1/2}e^{s\widehat{\alpha}}\widehat{\phi}^{1/2}\widehat{Z}_i||_{\bold{L}^2(Q)} ||s^{-1/2}e^{s\widehat{\alpha}}\widehat{\phi}^{-1/2}\nabla \widehat{Z}_i||_{\bold{L}^2(Q)} 
\end{align}
it is not difficult to see that we can absorb $s^{-\frac{1}{2}}\|e^{s\alpha}\phi^{-1/4+1/11}\widehat{Z}_i\|^2_{\bold{L}^2(\Sigma)}$ in \eqref{Z9} by taking $s$ large enough.

\null

\textit{Step 6.} \textit{Estimate of the $\bold{H}^{\frac{1}{4},\frac{1}{2}}$ boundary terms.}

\null
 To eliminate the $\bold{H}^{\frac{1}{4},\frac{1}{2}}$ boundary terms, we show that $z_i$, $i=1,3$, multiplied by several weight functions are regular enough. We begin noticing   that, from \eqref{eq:regularity},  we have
\begin{align}\label{B1}
 s^5 \int \! \! \! \int_Q & e^{2s\widehat{\alpha}}\widehat{\phi}^5|\rho|^2|v|^2 dx dt   \leq C \bigl( \|\rho F_3\|^2_{L^2(0,T;\bold{V})} + s^5 \int \! \! \! \int_Q e^{2s\widehat{\alpha}}\widehat{\phi}^5|z|^2 dx dt \bigl).
\end{align}
Thus, the term $\|s^{5/2}e^{s\widehat{\alpha}}\widehat{\phi}^{5/2}\rho v\|^2_{\bold{L}^2(Q)}$ is bounded by the left-hand side of \eqref{Z10} and $\|\rho F_3 \|^2_{L^2(0,T;\bold{V})}$:
\begin{align}\label{Z10}
s^5\int \! \! \! \int_Q&e^{2s\widehat{\alpha}}\widehat{\phi}^5 | z_2|^2 dx dt + s^5 \int \! \! \! \int_Q e^{2s\widehat{\alpha}}\widehat{\phi}^5|\rho|^2|v|^2 dx dt\nonumber \\
+& \sum_{i=1,3}\biggl(s^5\int \! \! \! \int_Q  e^{2s\alpha}\phi^5|\Delta z_i|^2dxdt + s^3\int \! \! \! \int_Q e^{2s\alpha}\phi^3| Z_i|^2dx dt+  \widehat{I}_{-2}(s; \widehat{Z}_i) \biggr)  \\
 \leq \ C&\sum_{i=1,3}\biggl( \|\rho F_3\|^2_{L^2(0,T;\bold{V})}+ s^{-\frac{1}{2}} \|e^{s\alpha}\phi^{-\frac{1}{4}}\widehat{Z}_i\|^2_{\bold{H}^{\frac{1}{4},\frac{1}{2}}(\Sigma)}  + s^5\int \! \! \! \int_{\omega_0^3 \times (0,T) } e^{2s\alpha}\phi^5|\Delta z_i|^2 dxdt. \nonumber
 \biggr).
\end{align}

We define now
$$\widetilde{z}:=\widetilde{l}(t)z,\,\widetilde{r}:=\widetilde{l}(t)r,$$
with
$$
\widetilde{l}(t) =  s^{3/2-1/11 }\widehat{\phi}^{3/2-1/11}e^{s\widehat{\alpha}}. 
$$
From \eqref{eq:u2}, we see that $(\widetilde{z},\widetilde{r})$ is the solution of the Stokes system:
\begin{equation*}
\left\lbrace \begin{array}{ll}
    -\widetilde{z}_t - \Delta \widetilde{z} + \nabla \widetilde{r} = -\widetilde{l}\rho' v - \widetilde{l}' z & \mbox{ in }Q, \\
    \nabla\cdot \widetilde{z} = 0 & \mbox{ in }Q, \\
    \widetilde{z} = 0 & \mbox{ on }\Sigma, \\
    \widetilde{z}(T) = 0 & \mbox{ in }\Omega.
 \end{array}\right.
\end{equation*}

Taking into account that
$$|\widehat{\alpha}_t| \leq C T\widehat{\phi}^{1+1/11},\,|\rho'|\leq C s^{1+1/11}\widehat{\phi}^{1+1/11}\rho,$$
$$
|\widetilde{l}\rho'| \leq C s^{5/2}\widehat{\phi}^{5/2}e^{s\widehat{\alpha}}\rho, \ \ |\widetilde{l}'| \leq Cs^{5/2}\widehat{\phi}^{5/2}e^{s\widehat{\alpha}},
$$
and using Lemma \ref{regStokes}, we have that
$$
\widetilde{z} \in L^2(0,T;\bold{H}^2(\Omega))\cap H^1(0,T;\bold{L}^2(\Omega))
$$
and
\begin{align}\label{tildez}
\|\widetilde{z}\|^2_{L^2(0,T;\bold{H}^2(Q))\cap H^1(0,T;\bold{L}^2(Q))}  \leq C \left( \| s^{5/2}\widehat{\phi}^{5/2}e^{s\widehat{\alpha}}\rho v \|^2_{\bold{L}^2(Q)} + \|s^{5/2}\widehat{\phi}^{5/2}e^{s\widehat{\alpha}} z\|^2_{\bold{L}^2(Q)} \right),
\end{align}
thus, $\|\widetilde{l}z\|^2_{L^2(0,T;\bold{H}^2(Q))\cap H^1(0,T;\bold{L}^2(Q))}$ is bounded by the left-hand side of \eqref{Z10}.


 Next, let
$$z^*:=l^*(t)z,\, r^*:=l^*(t)r,$$
with
$$
l^*(t) =  s^{1/2-2/11}\widehat{\phi}^{1/2-2/11}e^{s\widehat{\alpha}}.
$$
From \eqref{eq:u2}, $(z^*,r^*)$ is the solution of the Stokes system:
\begin{equation*}
\left\lbrace \begin{array}{ll}
    -z^*_t - \Delta z^* + \nabla r^* = -l^*\rho' v - (l^*)' z & \mbox{ in }Q, \\
    \nabla\cdot z^* = 0 & \mbox{ in }Q, \\
    z^* = 0 & \mbox{ on }\Sigma, \\
    z^*(T) = 0 & \mbox{ in }\Omega.
 \end{array}\right.
\end{equation*}
Let us show that the right-hand side of this system is in  $L^2(0,T;\bold{H}^2(\Omega))\cap H^1(0,T;\bold{L}^2(\Omega))$.

For the first term, we write
\be\label{defvstar}
l^*\rho' v = l^*\rho' \widetilde{l}^{-1}\rho^{-1}\widetilde{l}\rho  v =l^*\rho' \widetilde{l}^{-1}\rho^{-1} (\widetilde{z}+\widetilde{l}w)
\ee
and since
$$
|l^*\rho' \widetilde{l}^{-1}\rho^{-1}|  \leq C,
$$
we see that $l^*\rho' v =L^2(0,T;\bold{H}^2(\Omega))$. Moreover, because
$$
|(l^*\rho' \widetilde{l}^{-1}\rho^{-1})'|  \leq Cs\widehat{\phi}^{1+1/11}
$$
 the regularity of $\widetilde{z}$ and the one of $w$ give
$$
l^*\rho'v \in H^1(0,T;\bold{L}^2(\Omega)).
$$
From \eqref{defvstar}, \eqref{eq:regularity} and \eqref{tildez}, we have
\begin{align}\label{lvsatarprimeb}
\|l^*\rho'v\|^2_{L^2(0,T;\bold{H}^2(Q))\cap H^1(0,T;\bold{L}^2(Q))}  \leq & \ C \left( \| s^{5/2}\widehat{\phi}^{5/2}e^{s\widehat{\alpha}}\rho v \|^2_{\bold{L}^2(Q)} + \|s^{5/2}\widehat{\phi}^{5/2}e^{s\widehat{\alpha}} z\|^2_{\bold{L}^2(Q)} \right) \nonumber \\
& + C\|\rho F_3\|^2_{L^2(0,T;\bold{V})},
\end{align}


For the other term, we write
\be\label{derivstarz}
(l^*)'z = \widetilde{l}^{-1}(l^*)' \widetilde{z}
\ee
and since
$$
|\widetilde{l}^{-1}(l^*)' |\leq C
$$
we have that $(l^*)'z \in L^2(0,T;\bold{H}^2(\Omega))$. From the regularity of $\widetilde{z}$, and the fact that
$$
|((l^*)' \widetilde{l}^{-1})'|  \leq Cs\widehat{\phi}^{1+1/11},
$$
we have that $ (l^*)' z \in H^1(0,T;\bold{L}^2(\Omega))$ and
\begin{align}\label{tlstarzprime}
\| (l^*)' z\|^2_{L^2(0,T;\bold{H}^2(Q))\cap H^1(0,T;\bold{L}^2(Q))}  \leq & \ C \left( \| s^{5/2}\widehat{\phi}^{5/2}e^{s\widehat{\alpha}}\rho v \|^2_{\bold{L}^2(Q)} + \|s^{5/2}\widehat{\phi}^{5/2}e^{s\widehat{\alpha}} z\|^2_{\bold{L}^2(Q)} \right).
\end{align}


Using Lemma \ref{regStokes} once more, we obtain
$$
z^* \in L^2(0,T; \bold{H}^4(\Omega))\cap H^1(0,T;\bold{H}^2(\Omega))
$$
and
\begin{align}\label{starriest}
\|z^*\|^2_{L^2(0,T;\bold{H}^4(Q))\cap H^1(0,T;\bold{H}^2(Q))}  \leq & \ C \left( \| s^{5/2}\widehat{\phi}^{5/2}e^{s\widehat{\alpha}}\rho v \|^2_{\bold{L}^2(Q)} + \|s^{5/2}\widehat{\phi}^{5/2}e^{s\widehat{\alpha}} z\|^2_{\bold{L}^2(Q)} \right) \nonumber \\
& + C\|\rho F_3\|^2_{L^2(0,T;\bold{V})}.
\end{align}

Let us now define $\widehat{z} = \widehat{l}z$, where
$$
\widehat{l} = s^{-5/22}\widehat{\phi}^{-5/22}e^{s\widehat{\alpha}}.
$$
From \eqref{eq:u2}, $(\widehat{z},\widehat{r})$ is the solution of the Stokes system:
\begin{equation*}
\left\lbrace \begin{array}{ll}
-\widehat{z} _t - \Delta \widehat{z}  + \nabla \widehat{r}  = -\widehat{l} \rho' v - \widehat{l} ' z & \mbox{ in }Q, \\
    \nabla\cdot \widehat{z}  = 0 & \mbox{ in }Q, \\
    \widehat{z}  = 0 & \mbox{ on }\Sigma, \\
    \widehat{z} (T) = 0 & \mbox{ in }\Omega.
 \end{array}\right.
\end{equation*}

Noticing  that
$$
|\widehat{l}'| \leq C s^{1-5/22+1/11} \widehat{\phi}^{-5/22+1+1/11}e^{s\widehat{\alpha}} = C(l^*)^{1/2}(\widetilde{l})^{1/2}
$$
and interpolating $H^2(\Omega)$ and $H^4(\Omega)$, we obtain
\be \label{EST100}
|| \widehat{l}' z||_{L^2(0,T;\bold{H}^3(\Omega))} \leq C || \widetilde{l} z||_{L^2(0,T;\bold{H}^2(\Omega))}^{1/2}|| l^* z||_{L^2(0,T;\bold{H}^4(\Omega))}^{1/2}
\ee
and $\widehat{l}' z \in L^2(0,T;\bold{H}^3(\Omega))$.

Next, interpolating $L^2(\Omega)$ and $H^2(\Omega)$, we obtain
$$
|| \widehat{l}' z_t||_{L^2(0,T;\bold{H}^1(\Omega))} \leq C || \widetilde{l} z_t||_{L^2(0,T;\bold{L}^2(\Omega))}^{1/2}|| l^* z_t||_{L^2(0,T;\bold{H}^2(\Omega))}^{1/2}.
$$
We also have the following estimate
$$
|| (\widehat{l}')' z||_{L^2(0,T;\bold{H}^1(\Omega))} \leq C||  s^{5/2}e^{s\widehat{\alpha}}\widehat{\phi}^{5/2}z||_{L^2(0,T;\bold{L}^2(\Omega))}^{1/2}|| \widetilde{l} z||_{L^2(0,T;\bold{H}^2(\Omega))}^{1/2}.
$$
Therefore,  the following estimate holds
\begin{align}
|| \widehat{l}' z||_{ L^2(0,T;\bold{H}^3(\Omega)) \cap H^1(0,T;\bold{H}^1(\Omega))}^2 \leq & \ C \left( \| s^{5/2}\widehat{\phi}^{5/2}e^{s\widehat{\alpha}}\rho v \|^2_{\bold{L}^2(Q)} + \|s^{5/2}\widehat{\phi}^{5/2}e^{s\widehat{\alpha}} z\|^2_{\bold{L}^2(Q)}\right.  \nonumber \\
& +\left. \|\rho F_3\|^2_{L^2(0,T;\bold{V})}\right).
\end{align}

Next, writing
$$
\widehat{l} \rho' v = \widehat{l} \rho' (l^*)^{-1/2}\tilde{l}^{-1/2}\rho^{-1}(\tilde{l}^{1/2}(l^*)^{1/2}z +\tilde{l}^{1/2}(l^*)^{1/2}w)
$$
and using the fact that
$$
 |\widehat{l} \rho' (l^*)^{-1/2}\tilde{l}^{-1/2}\rho^{-1}| \leq C,
$$
the regularity of $\tilde{z}$, $z^*$ and the one of $w$, we have that $\widehat{l} \rho'v \in L^2(0,T;\bold{H}^3(\Omega))$ and the following estimate holds
\begin{align}\label{lvhatprimeb}
\|\widehat{l} \rho'v\|^2_{L^2(0,T;\bold{H}^3(\Omega))}  \leq & \ C \left( \| s^{5/2}\widehat{\phi}^{5/2}e^{s\widehat{\alpha}}\rho v \|^2_{\bold{L}^2(Q)} + \|s^{5/2}\widehat{\phi}^{5/2}e^{s\widehat{\alpha}} z\|^2_{\bold{L}^2(Q)} \right. \nonumber \\
& + \left. \|\rho F_3\|^2_{L^2(0,T;\bold{V})}\right).
\end{align}

It is immediate to see that
\begin{align*}
|| ((l^*)^{1/2}\tilde{l}^{1/2} z)_t||_{L^2(0,T;\bold{H}^1(\Omega))} &\leq C || (l^*)^{1/2}\tilde{l}^{1/2} z||_{H^1(0,T;\bold{H}^1(\Omega))} \\
&\leq C|| \widetilde{l} z||_{H^1(0,T;\bold{L}^2(\Omega))}^{1/2}|| l^* z||_{H^1(0,T;\bold{H}^2(\Omega))}^{1/2}
\end{align*}
and because
$$
|(\widehat{l} \rho' (l^*)^{-1/2}\tilde{l}^{-1/2}\rho^{-1})'|  \leq Cs\widehat{\phi}^{1+1/11},
$$
we also have that
$$
|| (\widehat{l} \rho' (l^*)^{-1/2}\tilde{l}^{-1/2}\rho^{-1})'(l^*)^{1/2}\tilde{l}^{1/2} z||_{L^2(0,T;\bold{H}^1(\Omega))} \leq C||  s^{5/2}e^{s\widehat{\alpha}}\widehat{\phi}^{5/2}z||_{L^2(0,T;\bold{L}^2(\Omega))}^{1/2}|| \widetilde{l} z||_{L^2(0,T;\bold{H}^2(\Omega))}^{1/2}.
$$
Hence, $\widehat{l} \rho'v \in H^1(0,T;\bold{H}^1(\Omega))$ and we have the estimate
\begin{align}\label{lvhatfinal0}
\|\widehat{l} \rho'v\|^2_{L^2(0,T;\bold{H}^3(\Omega))\cap H^1(0,T;\bold{H}^1(\Omega))}  \leq & \ C \left( \| s^{5/2}\widehat{\phi}^{5/2}e^{s\widehat{\alpha}}\rho v \|^2_{\bold{L}^2(Q)} + \|s^{5/2}\widehat{\phi}^{5/2}e^{s\widehat{\alpha}} z\|^2_{\bold{L}^2(Q)} \right. \nonumber \\
& +\left. \|\rho F_3\|^2_{L^2(0,T;\bold{V})}\right).
\end{align}

Therefore, from Lemma \ref{regStokes10}, we conclude that
\be \label{REG100}
\widehat{l}z \in  L^2(0,T;\bold{H}^5(\Omega)) \cap H^1(0,T;\bold{H}^3(\Omega))
\ee
and has the estimate
\begin{align}\label{lahatfinal}
\|\widehat{l}z\|^2_{L^2(0,T;\bold{H}^5(\Omega))\cap H^1(0,T;\bold{H}^3(\Omega))}  \leq & \ C \left( \| s^{5/2}\widehat{\phi}^{5/2}e^{s\widehat{\alpha}}\rho v \|^2_{\bold{L}^2(Q)} + \|s^{5/2}\widehat{\phi}^{5/2}e^{s\widehat{\alpha}} z\|^2_{\bold{L}^2(Q)} \right. \nonumber \\
& + \left. \|\rho F_3\|^2_{L^2(0,T;\bold{V})}\right).
\end{align}

Now, since
$$
||\widehat{l} \widehat{Z}_i||_{L^2(0,T;\bold{H}^1(\Omega)) \cap H^1(0,T;\bold{H}^{-1}(\Omega))} \leq C||\widehat{l}z||_{L^2(0,T;\bold{H}^5(\Omega)) \cap H^1(0,T;\bold{H}^3(\Omega))}
$$


and
$$
 s^{-\frac{1}{2}} \|e^{s\alpha}\phi^{-\frac{1}{4}}\widehat{Z}_i\|^2_{\bold{H}^{\frac{1}{4},\frac{1}{2}}(\Sigma)}  \leq s^{-1/22}  \|\widehat{l}\widehat{Z}_i\|^2_{\bold{H}^{\frac{1}{4},\frac{1}{2}}(\Sigma)},
 $$
 the $H^{\frac{1}{4},\frac{1}{2}}$ boundary terms on the right-hand side of \eqref{Z10} can be absorbed by its left-hand side by taking $s$ large enough.


%


Therefore, we conclude that
\begin{align}\label{Z110}
s^5\int \! \! \! \int_Q&e^{2s\widehat{\alpha}}\widehat{\phi}^5 | z_2|^2 dx dt + s^5 \int \! \! \! \int_Q e^{2s\widehat{\alpha}}\widehat{\phi}^5|\rho|^2|v|^2 dx dt\nonumber \\
+& \sum_{i=1,3}\biggl(s^5\int \! \! \! \int_Q  e^{2s\alpha}\phi^5|\Delta z_i|^2dxdt + s^3\int \! \! \! \int_Q e^{2s\alpha}\phi^3| Z_i|^2dx dt+  \widehat{I}_{-2}(s; \widehat{Z}_i) \biggr)  \\
 \leq \ C&\sum_{i=1,3}\biggl( \|\rho F_3\|^2_{L^2(0,T;\bold{V})}  + s^5\int \! \! \! \int_{\omega_0^3 \times (0,T) } e^{2s\alpha}\phi^5|\Delta z_i|^2 dxdt. \nonumber
 \biggr),
\end{align}
which is exactly \eqref{Z11}.
\end{proof}

\section{Proof of Claim \ref{claim1}}\label{proofclaim1}

In this section, we prove Claim \ref{claim1} used in the proof of Theorem \ref{CarlemanP}.

First, we use integration by parts to see that
\begin{align}\label{M1}
  s^9\int \! \! \! \int_{\omega_0^5 \times (0,T)}\theta_5 e^{2s\alpha} \phi^{18}\widehat{\phi}^{-9/2} &\rho\Delta \psi \xi_t dxdt    \nonumber\\
&=  -s^9\int \! \! \! \int_{\omega_0^5 \times (0,T)}\theta_5 e^{2s\alpha}\rho \widehat{\phi}^{-9/2} \phi^{18}(\Delta \psi)_t \xi dxdt  \\
& \ \  \ \ -s^9\int \! \! \! \int_{\omega_0^5\times (0,T)}\theta_5 (e^{2s\alpha}\rho  \phi^{18}\widehat{\phi}^{-9/2})_t  \Delta \psi \xi   dxdt. \nonumber
\end{align}

For the  first term, we use \eqref{x2-1} to write
\begin{align}\label{QX1}
&s^9\int \! \! \! \int_{\omega_0^5 \times (0,T)}\theta_5 e^{2s\alpha}\rho \widehat{\phi}^{-9/2} \phi^{18}(-\Delta \psi)_t \xi dxdt \\
&=s^9\int \! \! \! \int_{\omega_0^5 \times (0,T)}\theta_5 e^{2s\alpha}\rho \widehat{\phi}^{-9/2} \phi^{18} \xi \bigl((\Delta (\Delta \psi)   -M_0e^{-Mt}\rho\widehat{\phi}^{-9/2}\Delta \xi  +\rho\widehat{\phi}^{-9/2}\Delta v_3 - (\rho\widehat{\phi}^{-9/2})_t\Delta \varphi \bigl) dxdt. \nonumber
\end{align}

 Let  us now analyze each one of the terms in \eqref{QX1}.
\begin{align}
& s^9\int \! \! \! \int_{\omega_0^5\times (0,T)}\theta_5 e^{2s\alpha}\rho \widehat{\phi}^{-9/2} \phi^{18} \xi \Delta (\Delta \psi)dxdt \nonumber \\
& = -s^9\int \! \! \! \int_{\omega_0^5 \times (0,T)}\nabla (\theta_5 e^{2s\alpha}\rho\widehat{\phi}^{-9/2} \phi^{18}) \cdot  \nabla(\Delta \psi) \xi dxdt - s^9\int \! \! \! \int_{\omega_0^5 \times (0,T)}\theta_5 e^{2s\alpha}\rho \widehat{\phi}^{-9/2} \phi^{18} \nabla \xi \cdot  \nabla(\Delta \psi) dxdt \nonumber \\
& \leq C\biggl( s^{19}\int \! \! \! \int_{\omega_0^5 \times (0,T)}e^{2s\alpha}\widehat{\phi}^{28}|\rho|^2|\xi|^2dxdt +s^{17}\int \! \! \! \int_{\omega_0^5 \times (0,T)}e^{2s\alpha}\widehat{\phi}^{26}|\rho|^2|\nabla \xi|^2dxdt  \biggl) +\delta \widehat{I}_0(s,\Delta \psi),
\end{align}
since
$$
| \nabla (\theta_5 e^{2s\alpha}\rho\widehat{\phi}^{-9/2} \phi^{18})| \leq Cs\widehat{\phi}^{29/2} \rho e^{2s\alpha}1_{\omega_0^5}.
$$
Notice that
\begin{align}\label{QX2}
s^{17}\int \! \! \! \int_{\omega_0^6 \times (0,T)}\theta_6e^{2s\alpha}\widehat{\phi}^{26}|\rho|^2|\nabla \xi|^2dxdt  & = -s^{17}\int \! \! \! \int_{\omega_0^6\times (0,T)}\theta_6 e^{2s\alpha}\widehat{\phi}^{26}|\rho|^2\Delta \xi \xi dxdt  \nonumber \\
&+ \frac{s^{17}}{2}\int \! \! \! \int_{\omega_0^6 \times (0,T)}\Delta (\theta_6 e^{2s\alpha}\widehat{\phi}^{26}|\rho|^2)  |\xi|^2 dxdt  \\
& \leq C s^{33}\int \! \! \! \int_{\omega_0^6 \times (0,T)} e^{2s\alpha} \widehat{\phi}^{61}|\rho|^2|\xi|^2 dxdt + \delta I_2(s,\rho \widehat{\phi}^{-9/2} \xi),\nonumber
\end{align}
because
$$
|\Delta (\theta_6 e^{2s\alpha}\widehat{\phi}^{26}|\rho|^2) | \leq Cs^2\widehat{\phi}^{28} |\rho|^2e^{2s\alpha}1_{\omega_0^6}.
$$

Next,
\begin{align}\label{QX3}
& s^9\int \! \! \! \int_{\omega_0^5 \times (0,T)}\theta_5 e^{2s\alpha}\widehat{\phi}^{-9/2} \phi^{18} \rho\xi \widehat{\phi}^{-9/2}\rho\Delta \xi dxdt \nonumber \\
&=Cs^9\int \! \! \! \int_{\omega_0^5 \times (0,T)}\Delta(\theta_5 e^{2s\alpha} \widehat{\phi}^{9} |\rho|^2)  |\xi|^2 dxdt - Cs^9\int \! \! \! \int_{\omega_0^5 \times (0,T)}\theta_5 e^{2s\alpha} \widehat{\phi}^{9} |\rho|^2  |\nabla \xi|^2 dxdt \nonumber \\
& \leq C\bigl(s^{11}\int \! \! \! \int_{\omega_0^5 \times (0,T)}e^{2s\alpha}\widehat{\phi}^{11}|\rho|^2|\xi|^2dxdt + s^9\int \! \! \! \int_{\omega_0^5 \times (0,T)}e^{2s\alpha}\widehat{\phi}^{9}|\rho|^2|\nabla \xi|^2dxdt  \bigl)
\end{align}
because
$$
|\Delta (\theta_5 e^{2s\alpha} \widehat{\phi}^{9}  |\rho|^2) | \leq Cs^2\widehat{\phi}^{11} |\rho|^2e^{2s\alpha}1_{\omega_0^5} \ \mbox{and} \ |\widehat{\phi}^{-1}| \leq CT^{22}.
$$
We estimate the term in $v_3$ as follows
\begin{align}\label{QX4}
s^9\int \! \! \! \int_{\omega_0^5 \times (0,T)}&\theta_5 e^{2s\alpha} \widehat{\phi}^{9}  \rho^2 \xi \Delta v_3 dxdt  = s^9\int \! \! \! \int_{\omega_0^5 \times (0,T)}\theta_5 e^{2s\alpha} \widehat{\phi}^{9}\rho\xi\Delta(z+w) dxdt  \nonumber \\
& \leq  C\bigl(s^{13}\int \! \! \! \int_{\omega_0^5 \times (0,T)}e^{2s\alpha}\widehat{\phi}^{13}|\rho|^2|\xi|^2dxdt + \|\rho F_3\|^2_{L^2(0,T;\bold{V})} \bigl)+ \delta s^5\int \! \! \! \int_Q  e^{2s\alpha}\phi^5|\Delta z_3|^2dxdt,
\end{align}
for any $\delta >0$.
Finally, we have
\begin{align*}
&s^9\int \! \! \! \int_{\omega_0^5 \times (0,T)}\theta_5 e^{2s\alpha}\rho \widehat{\phi}^{-9/2} \phi^{18} \xi (\rho\hat{\phi}^{-9/2})_t \Delta \varphi dxdt \\
& = s^9\int \! \! \! \int_{\omega_0^5\times (0,T)}\theta_5 e^{2s\alpha}\phi^{18} \xi (\rho\widehat{\phi}^{-9/2})_t (\Delta \psi +\Delta \eta)dxdt
\end{align*}
and it is not difficult to see that
\begin{align}\label{M2}
| s^9\int \! \! \! \int_{\omega_0^5 \times (0,T)}\theta_5 e^{2s\alpha}\phi^{18}  (\rho\widehat{\phi}^{-9/2})_t \Delta \psi  \xi  dxdt|  &\leq Cs^{19}\int \! \! \! \int_{\omega_0^5 \times (0,T)}   e^{2s\alpha} \widehat{\phi}^{27}|\rho|^2|\xi|^2dxdt \nonumber \\
& \ \ \ +  \delta \widehat{I}_0(s,\Delta \psi),
\end{align}
since
$$
|(\rho\widehat{\phi}^{-9/2})_t| \leq Cs^{1+1/11}\widehat{\phi}^{-3}\rho.
$$

For the other term in \eqref{M1}, we have
\begin{align}
s^9\int \! \! \! \int_{\omega_0^5\times (0,T)}&\theta_5 (e^{2s\alpha}\rho  \phi^{18}\widehat{\phi}^{-9/2})_t  \Delta \psi \xi   dxdt \nonumber \\
&\leq Cs^{19}\int \! \! \! \int_{\omega_0^5 \times (0,T)}   e^{2s\alpha} \widehat{\phi}^{288/11}|\rho|^2|\xi|^2dxdt+  \delta \widehat{I}_0(s,\Delta \psi)
\end{align}
since
$$
|(e^{2s\alpha}\rho  \phi^{18}\widehat{\phi}^{-9/2})_t | \leq Cs^{1+1/11}\widehat{\phi}^{321/22} e^{2s\alpha}\rho.
$$

Therefore, we have the estimate
\begin{align}\label{M1X}
 |s^9\int \! \! \! \int_{\omega_0^5 \times (0,T)}\theta_5 e^{2s\alpha} \phi^{18}\widehat{\phi}^{-9/2} &\rho\Delta \psi \xi_t dxdt  |  \nonumber\\
 \leq &\  C\bigl(s^{33}\int \! \! \! \int_{\omega_0^6 \times (0,T)} e^{2s\alpha} \widehat{\phi}^{61}|\rho|^2|\xi|^2 dxdt  + \|\rho F_3\|^2_{L^2(0,T;\bold{V})}  \bigl) \nonumber \\
  & + \delta \bigl(I_2(s,\rho \widehat{\phi}^{-9/2} \xi)+  \widehat{I}_0(s,\Delta \psi)+ s^5\int \! \! \! \int_Q  e^{2s\alpha}\phi^5|\Delta z_3|^2dxdt\bigl).
\end{align}


\begin{thebibliography}{99}






\bibitem{Car-Guey} N. Carre\~no and M. Gueye, Insensitizing controls with one vanishing component for the Navier-Stokes system, J. Math. Pures Appl., 101 (1)(2014), 27--53.


\bibitem{CSG} F. W. Chaves-Silva, S. Guerrero,  A uniform controllability result for the Keller-Segel system, Asymptotic Analysis, 92 (3--4)(2015), 313--338.

\bibitem{FWCSthesis} F. W. Chaves-Silva, Sobre la controlabilidad de algunas ecuaciones en Cardiolog\'{i}a, Biolog\'{i}a, Mec\'{a}nica de  Fluidos y Viscoelasticidad, PhD thesis, University of the Basque Country,  2014.


 \bibitem{CorGue} J.-M. Coron, S. Guerrero,  Null controllability of the N-dimensional Stokes system with N-1 scalar controls, J. Differential Equations, 246(2009), 2908--2921.

 \bibitem{CorL}  J.-M. Coron, P. Lissy,  Local null controllability of the three-dimensional Navier-Stokes system with a distributed control having two vanishing components, Inventiones Mathematicae, 198 (3)(2014), 833--880.




\bibitem{Lorz2} R.-J. Duan, A. Lorz, P. Markowich, Global Solutions to the coupled chemotaxis-fluid equations, Communications in Partial Differential Equations, 35 (9) (2010),  1635--1673.

\bibitem{FC-G-P} E. Fern\'{a}ndez-Cara, S. Guerrero, O. Yu. Imanuvilov, J.-P. Puel, Local exact controllability of the Navier-Stokes system, J. Math. Pures Appl., 83 (12)(2004), 1501--1542.

\bibitem{C-G-B-P-1} E. Fern\'{a}dez-Cara, M. Gonz\'{a}lez-Burgos, S. Guerrero, J.-P. Puel, Null controllability of the heat equation with boundary Fourier conditions: the linear case, ESAIM Control, Optimization and Calculus of Variations, 12 (3)(2006), 442--465.


\bibitem{F-Im} A. V. Fursikov, O. Yu.Imanuvilov,  Controllability of Evolution Equations, Lecture Notes Series 34, Research Institute of Mathematics, Seoul National University, Seoul, 1996.
\bibitem{Im-1} O. Yu.Imanuvilov, Remarks on exact controllability for the Navier-Stokes equation, ESAIM Control Optim. Calc. Var. 6 (2001), 39--72.

\bibitem{Im-2} O. Yu.Imanuvilov, Controllability of evolution equations of fluid dynamics, Proceedings of the International Congress of Mathematicians, Madrid, Spain, 2006.


\bibitem{Graves} L. M. Graves, Some mapping theorems, Duke Math. J.,  17 (1950) 111 --114.


\bibitem{Horstman} D. Horstmann,  From 1970 until present: the Keller-Segel model in chemotaxis and its consequences, I. Jahresber. DMV, 105 (2003), 103--165.


\bibitem{IY}  O. Yu. Imanuvilov, M. Yamamoto,  Carleman estimate for a parabolic equation in a Sobolev space of negative order and its applications, Lecture Notes in Pure and Appl.\ Math., 218, Dekker, New York, 2001.


\bibitem{IMYamPuel} O. Yu. Imanuvilov,  J.-P. Puel, M. Yamamoto, Carleman estimates for parabolic equations with nonhomogeneous boundary conditions, Chin. Ann. Math. Ser. B,  30 (4)(2009), 333--378.


\bibitem{Keller-Segel} E. F. Keller, L. A. Segel, Initiation of slime mold aggregation viewed as an instability, J. Theor. Biol.,  26 (1970), 399--415.

\bibitem{L-S-U} O. A. Ladyzenskaya, V. A. Solonnikov, N. N. Uraltzeva, Linear and Quasilinear Equations of Parabolic Type, Trans. Math. Monographs: Moscow 23, AMS, Providence, RI, 1967.

\bibitem{Ladbook} O. A. Ladyzenskaya, The mathematical theory of viscous incompressible flow, revised English edition, translated from the Russian by Richard A. Silverlman, Gordon and Breach Science Publishers, New York, London, 1963.


\bibitem{LiuLorz} J.-G. Liu, A. Lorz, A Coupled Chemotaxis-Fluid Model: Global Existence,  Annales de l'Institut Henri Poincar\'e: Analyse Non Lin\'eaire, 28 (2011), 643--652.

\bibitem{Lorz1} A. Lorz, Coupled Chemotaxis Fluid Model, Mathematical Models and Methods in Applied Sciences, 20 (6) (2010), 987--1004.




\bibitem{Tuval} I. Tuval, L. Cisneros, C. Dombrowski, C. W. Wolgemuth, J. O. Kesller, R. E. Goldstein,   Bacterial swimming and oxygen transport near contact lines, PNAS 102 (7)(2005), 2227-2282.


\end{thebibliography}
\end{document}